\documentclass[11pt]{article}
\usepackage{graphicx}
\usepackage[left=0.5in,right=1in,top=0.7in,bottom=1in]{geometry}
\usepackage{amssymb,longtable,amsmath,amsthm}
\usepackage{epstopdf,tcolorbox}
\usepackage{amsmath,empheq}
\usepackage{authblk}
\DeclareGraphicsRule{.tif}{png}{.png}{`convert #1 `dirname #1`/`basename #1 .tif`.png}

\newcommand{\R}{\mathbb{R}}
\renewcommand{\S}{\mathbb{S}}
\newcommand{\Z}{\mathbb{Z}}

\newcommand{\C}{\mathbb{C}}

\newcommand{\E}{{\cal E}}
\newcommand{\F}{{\cal F}}

\newcommand{\tR}{\tilde{R}}

\newcommand{\lec}{\lesssim}
\newcommand{\gec}{\gtrsim}

\newcommand{\p}{\partial}

\renewcommand{\k}{\kappa}
\newcommand{\e}{\epsilon}
\newcommand{\al}{\alpha}
\newcommand{\la}{\lambda}

\renewcommand{\th}{\theta}
\newcommand{\be}{\beta}

\newtheorem{prop}{Proposition}

\newtheorem{rem}{Remark}

\newtheorem{lem}{Lemma}
\newtheorem{thm}{Theorem}
\newtheorem{cor}{Corollary}

\begin{document}

\title{Co-rotational chiral magnetic skyrmions near harmonic maps}
\author{S. Gustafson
\thanks{\noindent
(corresponding author)
gustaf@math.ubc.ca,
University of British Columbia,
1984 Mathematics Rd., Vancouver, Canada V6T1Z2}
\quad Li Wang
\thanks{\noindent
lwang@math.ubc.ca,
University of British Columbia,
1984 Mathematics Rd., Vancouver, Canada V6T1Z2}
}
 
\date{}
\maketitle

\abstract{
Chiral magnetic skyrmions are topological solitons, of significant physical
interest, arising in ferromagnets described by a micromagnetic energy including a 
chiral (Dzyaloshinskii-Moriya) interaction term. 
We show that for small chiral interaction, the skyrmions on $\R^2$ with 
co-rotational symmetry are close to harmonic maps, and prove precise bounds 
on the differences. One application of these bounds is precise energy asymptotics.
Another (pursued in a separate work) is an alternate, quantitative proof of
the recent skyrmion stability result of Li-Melcher.
}

\medskip

\noindent
{\bf Keywords}:
magnetic skyrmion, topological soliton, Landau-Lifshitz, micromagnetics, 
nonlinear geometric PDE, perturbation theory, Lyapunov-Schmidt reduction, threshold resonance, resolvent expansion.

\medskip
\noindent
{\bf AMS Classifications}: 35Q60, 35J60, 46N50

\tableofcontents

\section{Introduction and Main Results}

\subsection{Magnetization configurations}

{\it Micromagnetics} is a widely used continuum theory for describing the static and 
dynamic behaviour of ferromagnets \cite{brown1963micromagnetics,landau1935lifschiz,kosevich1990magnetic}. 
The state of the ferromagnet (at a fixed time) is given by its {\it magnetization},
a constant-length (here normalized to one) vector field
\[
  \hat{m}(x) = \left[ \begin{array}{c} m_1(x) \\ m_2(x) \\ m_3(x) \end{array} \right]
  \qquad |\hat{m}(x)|^2 = m_1^2(x) + m_2^2(x) + m_3^2(x) \equiv 1.
\]
Here we consider an infinite, two-dimensional magnet, so that $\hat{m}(x)$ is defined
for all $x = (x_1, x_2) \in \R^2$. Geometrically, the magnetization is a map into
the unit $2$-sphere   
\begin{equation}  \label{unit}
  \hat m : \R^2 \to \S^2, \qquad \S^2 = \{ \hat m \in \R^3 \; | \; |\hat m| = 1 \}
  \subset \R^3.
\end{equation}

The behaviour of the magnet is determined by a  
{\it micromagnetic energy} functional $\E(\hat m)$ of the magnetization $\hat m$.
Equlibrium (static) configurations are critical points of this energy, so satisfy the
Euler-Lagrange equation
\begin{equation}  \label{E-L0}
  \E'(\hat m) = 0,
\end{equation}
while the dynamics is given by the {\it Landau-Lifshitz} 
(or {\it Landau-Lifshitz-Gilbert}) equation
\begin{equation}  \label{L-L}
  \p_t \hat m = \hat m \times \E'(\hat m) - \gamma \; \E'(\hat m)
\end{equation}
where $\gamma \geq 0$ is a damping parameter.

The contribution to this energy which reflects the 
ferromagnetic character of the material is the {\it exchange energy}
\[
  \E_e(\hat{m}) := \frac{1}{2} \int_{\R^2} |\nabla \hat{m}(x)|^2 dx.
\]
Configurations with finite exchange energy may be classified by topological
degree (or {\it Skyrmion number}) \cite{schoen1983boundary}
\begin{equation}  \label{number}
  N := \frac{1}{4 \pi} \int_{\R^2} \hat{m} \cdot 
  \left( \p_{x_2} \hat{m} \times \p_{x_1} \hat{m} \right) dx = 
  \frac{1}{4 \pi} \int_{\R^2} \p_{x_1} \hat{m} \cdot J \p_{x_2} \hat{m} \; dx \;\; 
  \in \Z,
\end{equation}
where here $J$ denotes the $\frac{\pi}{2}$ rotation (i.e. complex structure) on the tangent plane
\begin{equation}   \label{tangent}
  T_{\hat{m}} \S^2 = \{ \xi \in \R^3 \; | \; \hat{m} \cdot \xi = 0\}
\end{equation}
to the sphere at $\hat m$:
\begin{equation}  \label{J}
\begin{split}
  J = J_{\hat{m}} \; : \; & T_{\hat{m}} \S^2 \to T_{\hat{m}} \S^2 \\
  & \quad \xi \;\; \mapsto \; \hat{m} \times \xi .
\end{split}
\end{equation}
The degree $N$ determines the minimum possible exchange energy
by a classical ``completing-the-square" computation 
\cite{belavin1975metastable,kosevich1990magnetic}:
\begin{equation}  \label{topo0}
  \E_e(\hat{m}) = \frac{1}{2} \int_{\R^2} 
  \left( |\p_{x_1} \hat{m}|^2 +  |\p_{x_2} \hat{m}|^2 \right) dx =
  \frac{1}{2} \int_{\R^2} \left| (\p_{x_1} \mp J \p_{x_2}) \hat{m} \right|^2 dx 
  \pm 4 \pi N \geq 4 \pi |N|.
\end{equation}
This lower bound is attained by magnetizations given by rational 
functions of $z = x_1 + i x_2$ or $\bar{z} = x_1 - i x_2$
when represented via stereographic projection
$w(z,\bar z) = \frac{m_1(x) + i m_2(x)}{1 + m_3(x)} \in \C$.

\subsection{Micromagnetic energy}

A large physics literature (see, e.g., \cite{
bogdanov1989thermodynamically,
bocdanov1994properties,
bogdanov1994thermodynamically,  
komineas2015skyrmion})
has confirmed that for certain {\it chiral} magnets,
an important spin-orbit coupling effect is well modelled by  
a {\it Dzyaloshinskii-Moriya} contribution to the energy
\[
  \E_{DM}(\hat{m}) =  \frac{1}{2} \int_{\R^2} \hat{m} \cdot (\nabla \times \hat{m}) dx
  = \int_{\R^2} \left( m_1 \partial_{x_2} m_3 - m_2 \partial_{x_1} m_3 \right) dx
\]
(the second expression comes from a formal integration by parts).
Following these studies, we include this effect, as well as an anisotropy
in the direction orthogonal to the plane of the magnet
\[
  \E_a(\hat{m}) = \frac{1}{2} \int_{\R^2} (1 - \hat m \cdot \hat k) dx
  = \frac{1}{2} \int_{\R^2} (1 - m_3^2) dx, \qquad
  \hat k = \left[ \begin{array}{c} 0 \\ 0 \\ 1 \end{array} \right],
\] 
and a Zeeman (constant external magnetic field along $\hat{k}$) energy: 
\[
  \E_z(\hat{m}) =  \int_{\R^2} (1 - \hat m \cdot \hat k) dx = 
  \int_{\R^2} (1 - m_3) dx.
\]
Combining these with the exchange energy, with coefficients, yields
the full micromagnetic energy we consider here:
\[
  \E_{k,\al,\be} (\hat{m}) = \E_e(\hat{m}) + \be \; k \; \E_{DM}(\hat{m}) 
  + \be^2 \al \; \E_a(\hat{m}) + \be^2 (1-\al) \; \E_z(\hat{m}), 
  \qquad k \in \R,  \;\; \be > 0, \;\; \al \leq 1.
\]
The form of the coefficients requires some explanation. 
First, we will combine the anisotropy and Zeeman energies and write
\[
  \E^{(\al)}(\hat{m}) := \al \E_a(\hat{m}) + (1- \al) \E_z(\hat{m}) =
  \int_{\R^2} \left( 1 - m_3 - \frac{1}{2} \al (1 - m_3)^2 \right) dx.
\]
This includes the special cases
\[
  \E^{(1)} = \E_a, \;\; \E^{(0)} = \E_z,
\]
but in general allows both {\it easy-axis} ($\al > 0$) and {\it easy-plane}
($\al < 0$) type anisotropies (though, in the latter case, balanced by a
sufficiently strong Zeeman term).  
Note that
\begin{equation}  \label{alpha}
  \al \leq 1 \; \implies \; \E^{(\al)} \geq \E^{(1)} = \E_a \geq 0.
\end{equation}
The full energy is then written as
\begin{equation}  \label{fullen}
  \E_{k,\al,\be}(\hat m) = \E_e(\hat m) + \be \; k \; \E_{DM}(\hat m) 
  + \be^2 \; \E^{(\al)}(\hat m) ,  \qquad k \in \R,  \;\; \be > 0, \;\; \al \leq 1.
\end{equation}
Second, note that $\be$ is purely a scaling parameter: for $\la > 0$,
\begin{equation}  \label{law}
  \E_{k,\al,\be}(\hat m(\la \; \cdot)) = \E_e(\hat m) + \frac{\be}{\la} \; k \; 
  \E_{DM}(\hat m) + \frac{\be^2}{\la^2} \; \E^{(\al)}(\hat m)
  = \E_{k, \al, \frac{\be}{\la}}(\hat m),
\end{equation}
so in particular we may arrange $\be = 1$ by taking $\la = \beta$ to give
\[
  \E_{k,\al,\be}(\hat m(\be \; \cdot)) =  \E_{k,\al,1}(\hat m).   
\]

Because of the unit length constraint~\eqref{unit} on $\hat m$, the Euler-Lagrange equation~\eqref{E-L0} for critical points of this energy functional is
\begin{equation}  \label{E-L1}
\begin{split}
  0 &= \E'_{k,\al,\be}(\hat m) =  P_{T_{\hat m} \S^2} \left[   
  -\Delta \hat m + \be \; k \; \nabla \times \hat m 
  + \be^2 \left(   \al- 1 - \al m_3 \right) \hat k \right] \\
  &= -\Delta \hat m - |\nabla \hat m|^2 \hat m + \be \; k
  \left( \nabla \times \hat m - (\hat m \cdot (\nabla \times \hat m)) \hat m \right)
  + \be^2 \left(  \al- 1 - \al m_3 \right) \left( \hat k - m_3 \hat m \right)
\end{split}
\end{equation}
where 
\[
  P_{T_{\hat m} \S^2} : \xi \mapsto \xi - (\hat m \cdot \xi) \xi
\]
is the orthogonal projection from $\R^3$ onto the tangent plane~\eqref{tangent}.
Though the energy functional appears to be quadratic in the magnetization $\hat m$,
the Euler-Lagrange equation~\eqref{E-L1} is clearly nonlinear, as a 
consequence of the geometric constraint~\eqref{unit}. 

In the physics literature, energies of this type were used to predict the
existence of stable (energy minimizing), topologically non-trivial, spatially localized
solutions of~\eqref{E-L1} called {\it skyrmions} (or {\it chiral magnetic skyrmions})
\cite{bogdanov1989thermodynamically,
bogdanov1994thermodynamically,
kiselev2011chiral}. 
Such  configurations were later observed 
experimentally \cite{romming2013writing,romming2015field}  and are of potential technological importance 
\cite{kiselev2011chiral,
fert2013skyrmions,
sampaio2013nucleation,
nagaosa2013topological,
iwasaki2013current,
buttner2015dynamics,
woo2016observation}. 

Mathematically, it was shown in 
\cite{melcher2014chiral,doring2017compactness} that the energy~\eqref{fullen} 
has a global minimizer under the constraint that the skyrmion number~\eqref{number} is fixed at $N=1$, rigorously establishing the existence of chiral skyrmions.

\subsection{Symmetric reduction}

Much of the physics literature 
(e.g. \cite{komineas2015skyrmion,
bogdanov1994thermodynamically,
kiselev2011chiral,
bogdanov1999stability} )
concerns chiral skyrmion configurations
with {\it co-rotational} symmetry. Here we explain the reduction to this symmetry class.
It seems not to be known if the global minimizer of 
\cite{melcher2014chiral, doring2017compactness} has this
symmetry.  

It is easily verified that the energy functionals
$\E_e$, $\E_a$, and $\E_z$ (hence also $\E^{(\al)}$) are each separately invariant under
\begin{itemize}
\item
spatial rotations: \; $\hat{m}(x) \mapsto \hat{m}(e^{\phi \tilde{R}} x)$, \quad
$\tilde{R} = \left[ \begin{array}{cc} 0 & -1 \\ 1 & 0 \end{array} \right]$, \quad
$e^{\phi\tilde{R}} = \left[ \begin{array}{cc} \cos \phi & -\sin \phi \\ \sin \phi & \cos \phi \end{array} \right]$, \quad $\phi \in \R$;
\item
target rotations fixing $\hat k$: \; $\hat{m}(x) \mapsto e^{ \phi R } \hat{m}(x)$, \;
$R = \left[ \begin{array}{ccc} 0 & -1 & 0 \\ 1 & 0 & 0 \\ 0 & 0 & 0 
\end{array} \right]$, \;
$e^{\phi R} = \left[ \begin{array}{ccc} \cos \phi & -\sin \phi & 0 \\ 
\sin \phi & \cos \phi & 0 \\ 0 & 0 & 1 \end{array} \right]$;
\item
spatial reflections: \; $\hat{m}(x) \mapsto \hat{m}(\pm \tilde F \hat m)$,
\quad $\tilde{F} = \left[ \begin{array}{cc} 1 & 0 \\ 0 & -1 \end{array} \right]$;
\item
target reflections fixing $\hat k$: \; 
$\hat m(x) \mapsto \pm F \hat m(x)$, \quad
$F = \left[ \begin{array}{ccc} 1 & 0 & 0 \\ 0 & -1 & 0 \\ 0 & 0 & 1 
\end{array} \right]$.  
\end{itemize}
The chiral energy term $\E_{DM}$ breaks each of these symmetries, but
the following lemma, proved in Section~\ref{symm}, shows
that it retains invariance under combined spatial and target rotations
\begin{equation}  \label{equivariant}
  \hat{m}(x) \mapsto e^{-\phi R} \hat{m}( e^{\phi \tR} x ), \qquad 
  \phi \in \R,
\end{equation}
and combined reflections
\begin{equation}  \label{corot}
  \hat{m}(x) \mapsto - F \hat{m} ( \tilde F \; x).
\end{equation}
\begin{lem}  \label{invar}
We have 
\begin{equation}  \label{symmetry}
   \E_{DM}\left( e^{-\phi R} \hat{m}( e^{\phi \tR} \cdot ) \right) =  \E_{DM}(\hat{m})
   \qquad \forall \phi \in \R, 
\end{equation}
and
\begin{equation}  \label{symmetry2}
  \E_{DM} \left( -F \hat{m} ( \tilde{F} \; \cdot ) \right) = \E_{DM}(\hat{m}).
\end{equation}
\end{lem}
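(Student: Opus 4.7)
The plan is to prove both identities by direct change of variables in the explicit form
\[
\E_{DM}(\hat m) = \int_{\R^2}\bigl(m_1\,\p_{x_2}m_3 - m_2\,\p_{x_1}m_3\bigr)\,dx.
\]
Each transformation has the form $\tilde m(x) = U\,\hat m(Vx)$ with $U\in O(3)$ preserving the line $\R\hat k$ and $V\in O(2)$. The chain rule then yields explicit expressions for $\tilde m_1,\tilde m_2,\tilde m_3,\nabla_x\tilde m_3$ as linear combinations of values of $\hat m$ and $\nabla\hat m$ at $y:=Vx$, after which substitution into the integrand and the unit-absolute-Jacobian change of variable $x\mapsto y$ reduces the claim to an algebraic identity for the transformed density.

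It is convenient to rewrite the integrand as a planar wedge,
\[
\omega_{DM}(\hat m) = (m_1,m_2)\wedge\nabla m_3,\qquad (a_1,a_2)\wedge(b_1,b_2) := a_1b_2 - a_2b_1,
\]
noting that $(Sa)\wedge(Sb)=(\det S)(a\wedge b)$ for any $S\in O(2)$. For the rotation identity~\eqref{symmetry}, set $U=e^{-\phi R}$ and $V=e^{\phi\tR}$: since $R$ fixes $\hat k$ and acts as $\tR$ on $\hat k^\perp$, one has $\tilde m_3(x)=m_3(y)$, $(\tilde m_1,\tilde m_2)(x)=e^{-\phi\tR}(m_1,m_2)(y)$, and $\nabla_x\tilde m_3(x)=e^{-\phi\tR}\nabla_y m_3(y)$. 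Both wedge factors are then acted on by the same $SO(2)$ element, whose determinant is $+1$, so $\omega_{DM}(\tilde m)(x)=\omega_{DM}(\hat m)(y)$, and integrating over $x$ followed by the change of variables gives~\eqref{symmetry}.

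For the reflection identity~\eqref{symmetry2}, the same template applies with $U=-F$ and $V=\tilde F$, but now one picks up four independent $\pm 1$'s: the sign on $m_3$ from $-F$, the block action of $-F$ on $(m_1,m_2)$, the action of $\tilde F$ on $\nabla m_3$, and the factor $\det\tilde F=-1$ from the wedge identity. The work is to combine these carefully and verify that they collapse to leave $\omega_{DM}(\tilde m)(x)$ equal to $\omega_{DM}(\hat m)(y)$; once that is done, integration and the change of variables (of unit absolute Jacobian) finish the proof. This sign bookkeeping is the only real obstacle — miscombining any one of the four factors would spuriously reverse the direction of the identity — while the rotation case is essentially immediate from the $SO(2)$-invariance of the planar wedge, or equivalently from the $SO(3)$-equivariance of the helicity density $\hat m\cdot(\nabla\times\hat m)$ restricted to rotations fixing $\hat k$.
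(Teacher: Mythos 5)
Your argument for the rotation identity~\eqref{symmetry} is correct, and the wedge packaging $\omega_{DM}(\hat m)=(m_1,m_2)\wedge\nabla m_3$ together with $SO(2)$-invariance of the planar wedge is a cleaner bookkeeping than the paper's explicit component-by-component computation of $\nabla\times$; both are direct change-of-variables arguments and give the same conclusion.

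For the reflection identity~\eqref{symmetry2}, however, you defer the ``sign bookkeeping'' and simply assert the factors collapse — and when one actually carries them out for the transformation as written, they do not. With $U=-F=\mathrm{diag}(-1,1,-1)$, $V=\tilde F=\mathrm{diag}(1,-1)$ and $y=\tilde Fx$: the top $2\times2$ block of $-F$ gives $(\tilde m_1,\tilde m_2)(x)=\mathrm{diag}(-1,1)\,(m_1,m_2)(y)$, while the sign on $m_3$ combined with the chain rule gives $\nabla_x\tilde m_3(x)=-\tilde F\,\nabla_y m_3(y)=\mathrm{diag}(-1,1)\,\nabla_y m_3(y)$. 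Both wedge factors therefore transform by the \emph{same} $S=\mathrm{diag}(-1,1)$, with $\det S=-1$, so $\omega_{DM}(\tilde m)(x)=(\det S)\,\omega_{DM}(\hat m)(y)=-\omega_{DM}(\hat m)(y)$, and after the unit-Jacobian change of variables one finds $\E_{DM}(-F\hat m(\tilde F\cdot))=-\E_{DM}(\hat m)$, with the opposite sign from what the lemma asserts. The same discrepancy is visible on the co-rotational ansatz~\eqref{corotational}: $-F\hat m(\tilde F\cdot)$ sends $m_3=\cos u\mapsto-\cos u$, i.e.\ $u\mapsto\pi-u$, under which $E_{DM}(u)=\int_0^\infty\sin^2(u)\,u_r\,r\,dr$ changes sign and the ansatz is not preserved. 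The transformation that both preserves $\E_{DM}$ and fixes~\eqref{corotational} is $\hat m\mapsto\mathrm{diag}(-1,1,1)\,\hat m(\tilde F\cdot)$, whose upper block is the negative of that of $-F$. So the gap in your proposal is concrete: the asserted sign cancellation is never checked, and in fact fails for the $-F$ as defined in the paper — a discrepancy you would have caught by actually doing the four-sign accounting you identify as the only obstacle.
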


We refer to magnetization vectors $\hat{m}$ which are invariant under the 
transformation~\eqref{equivariant} as {\it equivariant}, and those which are
additionally invariant under~\eqref{corot}  as {\it co-rotational}. 
The latter take the special form
\begin{equation} \label{corotational}
  \hat{m}(x) = \left[ \; -\sin(u(r)) \sin(\theta), \; \sin(u(r)) \cos(\theta), \; \cos(u(r)) \; \right],
  \qquad u : [0,\infty) \to \R,
\end{equation}
where $(r,\theta)$ are polar coordinates on $\R^2$
(up to a reflection $\hat m \mapsto e^{\pi R} \hat m$, which reverses the sign of 
$\E_{DM}(\hat{m})$ and so can be absorbed by change of coefficient 
$k \mapsto -k$).
By Lemma~\ref{invar}, the full energy $\E_{k,\al,\be}$ is invariant under
both~\eqref{equivariant} and~\eqref{corot}, and so a critical point of
$\E_{k,\al,\be}$ restricted to co-rotational maps~\eqref{corotational}
will be a critical point of $\E_{k,\al,\be}$, hence a solution of the
Euler-Lagrange equation~\eqref{E-L1} -- see Proposition~\ref{differentiability}.
\begin{rem}
The co-rotational class~\eqref{corotational} is however {\it not} preserved by the 
dynamical Landau-Lifshitz equation~\eqref{L-L}, while the the larger equivariant class is.
This is because the reflection~\eqref{corot} does not commute with the 
complex structure~\eqref{J}, while the rotations~\eqref{equivariant} do. 
\end{rem}

A straightforward computation shows that the full energy~\eqref{fullen} of a co-rotational magnetization~\eqref{corotational} produces the following functional of the profile $u$:
\begin{equation}  \label{symmen}
  E_{k,\al,\be}(u) := \frac{1}{2\pi} \E_{k,\al,\be}(\hat{m}) =
  E_e(u) + \be \; k \; E_{DM}(u) + \be^2 \; E^{(\al)} (u) 
\end{equation}
where
\[  
\begin{split}
  &E_e(u) = \frac{1}{2} \int_0^\infty \left( u_r^2 + \frac{1}{r^2} \sin^2(u(r)) \right) r dr \\
  &E_{DM}(u) = \int_0^\infty \sin^2(u(r)) u_r \; r dr \\
  &E^{(\al)}(u) = \int_0^\infty \left( 1 - \cos(u(r)) - \frac{1}{2} \al (1-\cos(u(r)))^2  
  \right)  \; r dr.
\end{split}
\]
Another straightforward computation shows that for co-rotational magnetizations
\[
  \p_{x_1} \hat m \cdot J_{\hat m} \p_{x_2} \hat m =
  -\frac{1}{r} \sin u \; u_r = \frac{1}{r} ( \cos u )_r
\]
so by~\eqref{number}, the Skyrmion number of a co-rotational map is
\begin{equation}  \label{number2}
  N = \frac{1}{4\pi} \; 2 \pi \; \int_0^\infty ( \cos u )_r \; dr =
  \frac{1}{2} \cos u(r) \big|^{r=\infty}_{r=0}.
\end{equation}
By the scaling law~\eqref{law}, we have 
\begin{equation}  \label{law2}
  E_{k,\al,\be}(u(\la \cdot)) = E_{k,\al,\frac{\be}{\la}}(u),
\end{equation}

\subsection{Main results}

We are interested in minimizing profiles $u$ of the reduced energy~\eqref{symmen},
especially those satisfying boundary conditions
\begin{equation}  \label{BC0}
  u(0) = \pi, \;\; u(\infty) = 0 \;\; \implies \;\; N = 1
\end{equation}
by~\eqref{number2}. 
Such minimizers produce solutions of the form~\eqref{corotational} to the full 
Euler-Lagrange equations~\eqref{E-L1}. These are chiral magnetic skyrmions
solutions with co-rotational symmetry and Skyrmion number $N=1$. 

\cite{li2018stability} showed the existence of minimizers of~\eqref{symmen}
in this topological class for $0 < k < 1$, in the case $\al = 0$.
They also showed that for $k \ll 1$ the minimizers are unique and monotone.

Our approach is to treat the minimization problem for~\eqref{symmen}, for $k \ll 1$, as
a perturbation of the minimization problem for the $k = \be = 0$ limit energy 
$E_{0,\al,0} = E_e$.
The minimizers of the exchange energy $E_e$ with the boundary conditions~\eqref{BC0} are well-known to be 
\cite{belavin1975metastable,kosevich1990magnetic}
\[
  u(r) = Q(r/s) \; \mbox{ for some } s > 0, \quad
  Q(r) = \pi - 2 \tan^{-1}(r).
\]
The {\it bubble} $Q$ corresponds, via~\eqref{corotational}, to a degree-one harmonic map
$\R^2 \to \S^2$. Note that this is a one-parameter family, due to the scale invariance of $E_e$.  

Our main result makes precise the sense in which the skyrmion profiles
are perturbations of the bubble $Q$. To state it, we introduce the family of Banach spaces
\[
  X^p := \{ \xi : [0,\infty) \to \R \; | \; \| \xi \|_{X^p} < \infty \}, \quad
  \| \xi \|_{X^p} := \left\| \xi_r \right\|_{L^p} + \left\| \frac{\xi}{r} \right\|_{L^p}, \quad
  1 \leq p \leq \infty
\]
where 
\[
  \| f \|_{L^p} := \left( \int_0^\infty f^p(r) \; r dr \right)^{\frac{1}{p}}
\] 
denotes the $L^p$ norm of the radially-symmetric function $f(|x|)$ on $\R^2$.
The case $p=2$,
\[
  X = X^2 = \{ \xi : [0,\infty) \to \R \; | \; \| \xi \|_X := \| \xi_r \|_{L^2} +
  \left\| \frac{\xi}{r} \right\|_{L^2} < \infty \}, 
\]
plays a key role as our ``energy space".
\begin{thm} \label{mainthm}
Let $A > 0$ be given. There is $k_0 > 0$ and $C > 0$ such that for
each $0 < k \leq k_0$, $-A \leq \al \leq 1$, $\be > 0$, 
there is a unique minimizer $v_{k,\al, \be}$ of $E_{k,\al, \be}$
in the class $Q + X$. The corresponding map
\[
  \hat m_{k,\al, \be} = 
  \left[ \; -\sin(v_{k,\al, \be}) \sin(\theta), \; \sin(v_{k,\al, \be}) \cos(\theta), \; 
  \cos(v_{k,\al, \be}) \; \right]
\]
is a smooth solution of the Euler-Lagrange equations~\eqref{E-L1} with 
Skyrmion number $N=1$. Moreover, for a particular choice of scaling
$\be = \be(k) \to 0$ as $k \to 0$, satisfying
\begin{equation}  \label{relationThm}
  k = \be \left( 2 \log \left( \frac{1}{\be} \right) + O(1) \right),
\end{equation}
the skyrmion profile satisfies the estimates
\begin{equation}  \label{estimatesThm}
  \| v_{k,\al,\be} - Q \|_X \leq C \be, \qquad
  \| v_{k,\al,\be} - Q \|_{X^\infty} \leq C \be^2 \log \left( \frac{1}{\be} \right).
\end{equation}
Finally, the skyrmion energy behaves as
\begin{equation}  \label{energyThm}
  E_{k,\al,\be}(v_{k,\al,\be}) = E_e(Q) - \frac{k^2}{2 \log \left( \frac{1}{k} \right)}
  \left(1 + O \left( \frac{1}{\log \left( \frac{1}{k} \right)} \right) \right).   
\end{equation}
\end{thm}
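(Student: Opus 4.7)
The plan is a Lyapunov--Schmidt reduction around the harmonic bubble $Q$, with the scaling parameter $\be$ playing the role of the implicit modulation parameter. Writing $v = Q + \xi$ with $\xi \in X$, I impose the orthogonality condition $\langle \xi, \Lambda Q \rangle = 0$, where $\Lambda Q := -r Q_r = 2r/(1+r^2)$ is the infinitesimal scaling mode of $Q$. The Euler--Lagrange equation for $E_{k,\al,\be}$ then splits into a projected equation for $\xi$ on the orthogonal complement of $\Lambda Q$ and a scalar ``bifurcation'' equation obtained by pairing with $\Lambda Q$; the latter will determine the relation~\eqref{relationThm} between $\be$ and $k$.

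The first step is the linear theory for $L := E_e''(Q)$, which on radial functions takes the explicit form $L = -\p_r^2 - \frac{1}{r}\p_r + \frac{1}{r^2}\cos(2Q)$. One verifies directly that $L\Lambda Q = 0$, and a second, unbounded zero solution follows by reduction of order. In $X$ the kernel is spanned by $\Lambda Q$, so $L$ is invertible on $\Lambda Q^\perp$; however, because $\Lambda Q$ decays only like $2/r$ at infinity and the problem is two-dimensional, the resolvent loses one logarithm when applied to typical forcings with $1/r^2$ decay. I would quantify this loss via an explicit resolvent expansion, using the two homogeneous solutions and variation of parameters, to obtain sharp $X$ and $X^\infty$ estimates for $L^{-1}$ on the source profiles that actually arise.

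Next I would solve the projected Euler--Lagrange equation
\[
  L\xi \;=\; -\be k \, E_{DM}'(Q) \;-\; \be^2 (E^{(\al)})'(Q) \;+\; N(\xi),\qquad \langle \xi, \Lambda Q\rangle = 0,
\]
with $N(\xi)$ collecting the nonlinear remainders, by a contraction mapping in the intersection of balls $\|\xi\|_X \leq C\be$ and $\|\xi\|_{X^\infty} \leq C\be^2 \log(1/\be)$. The DM term is the dominant $O(\be k)$ forcing and, after inversion through $L^{-1}$, yields the $X$ bound; the stronger $X^\infty$ estimate picks up the extra logarithm from the resolvent loss. The scalar bifurcation equation is then obtained by pairing the full Euler--Lagrange equation with $\Lambda Q$: scale invariance of $\E_e$ kills the $L\xi$ contribution, leaving to leading order
\[
  \be k \langle E_{DM}'(Q),\Lambda Q\rangle \;+\; \be^2 \langle (E^{(\al)})'(Q),\Lambda Q\rangle \;+\; (\mbox{corrections from } \xi) = 0.
\]
The first pairing equals $E_{DM}(Q)\ne 0$ by the scaling identity $\langle E_{DM}'(u),\Lambda u\rangle = E_{DM}(u)$; the second is formally divergent at infinity, but interpreting it through the effective cutoff set by the decay length $1/\be$ of $\xi$ produces exactly the $\log(1/\be)$ responsible for~\eqref{relationThm}, and the implicit function theorem then determines $\be = \be(k)$ with the stated asymptotic.

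Finally, substituting $v = Q + \xi_{k,\al,\be(k)}$ back into $E_{k,\al,\be(k)}$ and using scale invariance together with the bifurcation equation to eliminate $\be$ in favor of $k$ yields the energy expansion~\eqref{energyThm}; smoothness and Skyrmion number $N=1$ follow from elliptic regularity applied to~\eqref{E-L1} together with the boundary behavior built into the class $Q+X$. The main obstacle throughout is the threshold-resonance analysis of $L$: because $\Lambda Q$ decays only like $1/r$, the Green's function on $\Lambda Q^\perp$ has logarithmic growth, and capturing the sharp size of this loss --- and tracking it consistently through both the fixed-point iteration and the scalar equation --- is what generates the $\log(1/\be)$ factor in~\eqref{relationThm} and is the most delicate ingredient of the construction.
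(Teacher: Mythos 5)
Your high-level framework --- Lyapunov--Schmidt reduction around $Q$ with the scaling parameter playing the role of the modulation parameter, a projected equation for $\xi$, and a scalar bifurcation equation producing the $k$--$\be$ relation --- does match the spirit of the paper, and your heuristic for where the $\log(1/\be)$ comes from (the formally divergent pairing $\langle (E^{(\al)})'(Q), \Lambda Q\rangle$ cut off at scale $1/\be$) is on target. But several of the technical choices you make would not survive scrutiny, and they are precisely the points where the paper's argument is nontrivial.

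First, the orthogonality condition $\langle \xi, \Lambda Q\rangle = 0$ is ill-posed on $X$. Here $\Lambda Q = h = 2r/(1+r^2) \notin L^2$, and for general $\xi \in X$ the integral $\int_0^\infty \xi h\, r\, dr \sim \int \xi\, dr$ need not converge: $\xi \in X$ gives boundedness and decay of $\xi/r$ in $L^2$ but no pointwise or $L^1$ decay. A Lyapunov--Schmidt projection has to be a bounded functional on the ambient space for the fixed-point argument to close. This is not a cosmetic issue but the central difficulty of the threshold resonance, and the paper resolves it by using the $\be$-regularized condition $\langle W\xi, R_0(\be)h\rangle = 0$ (equivalently $\langle s + N, R_0(\be)Wh\rangle = 0$), which is well-defined on $X$ because $W$ decays rapidly, and which degenerates to the naive condition only in a carefully quantified way as $\be \to 0$.

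Second, you identify the DM forcing $\be k\, h^2/r$ as dominant for the $X$ bound, but the actual dominant source term is the Zeeman piece $-\be^2 h$. After inversion, the DM term contributes $O(\be k) = O(\be^2\log(1/\be))$, whereas $-\be^2 h$ costs $O(\be)$ because $h$ is the threshold resonance and $\|R_0(\be)h\|_X \sim 1/\be$; since $\be \gg \be^2\log(1/\be)$, the Zeeman term wins. This matters for the $X^\infty$ estimate too: it is not a matter of ``picking up an extra log.'' The $X^\infty$ bound $\be^2\log(1/\be)$ is actually a full power of $\be$ \emph{better} than the $X$ bound $\be$ (times a log), and the mechanism is that the problematic source $-\be^2 h$ must be \emph{annihilated}, not merely estimated more sharply. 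The paper achieves this by the factorization $H = F^*F$ with $F = h\p_r(1/h)$: since $Fh = 0$, applying $F$ to the equation kills the slowly decaying source and replaces $H$ by $\hat H = FF^*$, which has no threshold resonance, so uniform-in-$\be$ resolvent bounds become available. Without some such mechanism, a contraction in the intersection of balls $\|\xi\|_X \leq C\be$, $\|\xi\|_{X^\infty} \leq C\be^2\log(1/\be)$ would not close, because feeding the $X$-level source into the resolvent simply cannot produce the much smaller $X^\infty$ bound.

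Third, the bifurcation equation as you write it is only formal: the pairing $\langle (E^{(\al)})'(Q), \Lambda Q\rangle$ diverges, and the ``effective cutoff'' is exactly the thing you would need to justify. The paper makes this rigorous via explicit Fourier--Bessel asymptotics of $\langle g, R_0(\be)h\rangle$ (Lemma~\ref{key}), obtaining $\langle Wh, R_0(\be)h\rangle = 4\log(1/\be) + O(1)$, from which the sharp relation $k = \be(2\log(1/\be) + O(1))$ falls out together with controlled error terms. Your appeal to the implicit function theorem would also need the derivative of the bifurcation map to be bounded below uniformly, which is again a resolvent-expansion estimate (the monotonicity estimate on $\gamma(\be) = \be\langle R_0(\be)Wh, h\rangle$) that you have not supplied.

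In short: the skeleton is recognizably the same, but the orthogonality condition, the identification of the dominant source, and the mechanism behind the $X^\infty$ estimate are all wrong or missing, and each is an essential ingredient rather than a detail.
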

\begin{rem}
By the scaling relation~\eqref{law2}, for any $\tilde \be > 0$,
$v_{k,\al, \tilde \be} \left( \frac{\be}{\tilde \be} r \right) = v_{k, \al, \be}(r)$,
so the estimates~\eqref{estimatesThm} give the $k \to 0$ behaviour
of all the skyrmion profiles.
\end{rem}

Compared to \cite{li2018stability}, we consider a more general energy functional,
involving $E^{(\al)}$ versus just $E^{(0)}$.
More importantly, our perturbative approach gives the precise 
description~\eqref{estimatesThm} of the skyrmion profile for $k \ll 1$, which has   
several advantages. For example, it allows us to compute the precise energy
asymptotics~\eqref{energyThm}
(see also~\cite{komineas2019profile}, where energy asymptotics are found
using formal asymptotic arguements).
Also, in the forthcoming paper \cite{li2020stability}, this information is used
to prove the stability of these skyrmion solutions (against general perturbations), with
precise estimates on the spectral gap. Stability is already obtained by
\cite{li2018stability} in the case $\al = 0$, but by a very different method. They use
the monotonicity of the skyrmion profile, which relies on an ODE argument.
It is unclear how to adapt this argument to the more complicated energy functionals
we consider here, or if it could be used to compute the spectral gap. 

There are two main complications in treating minimizers of $E_{k,\al,\be}$ for small $k$ 
as perturbations of the minimizers $Q(\cdot/s)$ of $E_{0,\al,0} = E_e$. Firstly, the
unperturbed minimizers have slow spatial decay: $Q \not\in L^2$.
Second, the unperturbed problem is scale invariant. As a consequence, 
the linear operator
\[
  H = -\Delta + \frac{1}{r^2} - \frac{8}{(1+r^2)^2}
\]
which appears when linearizing the Euler-Lagrange equation for $E_e$ around $Q$
(see Section~\ref{eqn}) has a resonance at the threshold of its continuous spectrum:
\[
  H h = 0, \qquad h(r) = \frac{d}{ds} Q \left(\frac{r}{s} \right) \big |_{s=1} = 
  \frac{2r}{1 + r^2} \not\in L^2.
\]
The equation for the correction to $Q$ involves the resolvent
$(H + \be^2)^{-1}$, and an important ingredient in our analysis 
is various estimates of this resolvent acting on spaces of functions with slow spatial decay.

In particular, the resolvent becomes singular as $\be \to 0$ due to the resonance.
To prove the first estimate in~\eqref{estimatesThm}, we must  remove this singularity
by carefully adjusting the scaling $s$ in $Q(r/s)$, which results in the relation~\eqref{relationThm}. This kind of
{\it Lyapunov-Schmidt reduction in the presence of a resonance} is similar to the 
analysis in \cite{coles2017solitary} of solitons of perturbations of the energy-critical 
nonlinear Schr\"odinger equation in 3D, though the specific estimates are quite
different for our 2D problem.  

Another key ingredient is to exploit the factorized structure of the linear operator
\[
  H = F^*F, \qquad F = h \partial_r \frac{1}{h} = 
  \partial_r + \frac{r^2-1}{r(r^2+1)}.
\]   
Applying $F$ to the equation for correction $\xi$ to $Q$ produces an equation
for $F \xi$ whose linear operator
\[
  \hat H = F F^* = -\Delta + \frac{4}{r^2(r^2+1)}
\]
has no threshold resonance or eigenvalue, and so good estimates for its
resolvent $(\hat H + \be^2)^{-1}$, without singularity as $\be \to 0$, can be proved.
This is how the second estimate of~\eqref{estimatesThm} is established. 

\subsection{Organization}

The existence of minimizing skyrmion profiles, and their
convergence to the bubble $Q$ as $k \to 0$, are shown in Section~\ref{sec:exist}. 
Much of this is similar to~\cite{melcher2014chiral,doring2017compactness,li2018stability},
but since we have a more general energy functional it is included,
though briefly, with technical details left to appendix 
Sections~\ref{functional}-~\ref{monotoneprofile}.

The main argument establishing the perturbation 
estimates~\eqref{relationThm} and~\eqref{estimatesThm} 
is in Section~\ref{sec:estimates}. This argument relies heavily on various
resolvent estimates which are important, but for readability are left to the appendix
Section~\ref{sec:resolvent}.

Finally, uniqueness is shown in Section~\ref{uniqueness}, 
by a variant of the perturbation estimates in Section~\ref{sec:estimates},
and the energy asymptotics~\eqref{energyThm} are proved
in Section~\ref{sec:energy}. 

{\bf Remarks on notation}:

${\bf \bullet}$ 
notation like $L^p$ and $H^s$ will generally refer to spaces of radial profile 
functions, e.g.,
$\| f \|_{L^p}^p = \int_0^\infty f^p(r) \; r dr$. 
For Lebesgue or Sobolev norms/spaces of functions
on $\R^2$ we use $L^p(\R^2)$ and $H^s(\R^2)$;

${\bf \bullet}$ 
both $( \cdot, \; \cdot )$ and $\langle \cdot, \; \cdot \rangle$ denote the standard 
$L^2$ inner-product: 
$(f, g) = \langle f, g \rangle = \int_0^\infty f(r) g(r) \; r dr$;

${\bf \bullet}$ as usual, $A \lec B$ means $A \leq C B$ for some constant $C$
independent of any relevant parameters.

\section{Existence and Isotropic Limit of Co-rotational Skyrmions}
\label{sec:exist}
\subsection{Minimization problem and function space}

To produce a skyrmion solution, we wish to minimize the energy
$E_{k,\al,\be}(u)$ among profile functions $u(r)$, in an appropriate 
function space, corresponding to topologically non-trivial magnetization configurations. 

To identify the natural function space, first observe that in the 
co-rotational setting, we have 
the following localized version of the elementary topological lower bound~\eqref{topo0}:
for any $0 \leq r_1 < r_2 \leq \infty$,
\begin{equation}  \label{topo}
\begin{split}
  E_e^{[r_1, r_2]}(u) &:= \frac{1}{2} \int_{r_1}^{r_2}
  \left( u_r^2 + \frac{1}{r^2} \sin^2 u \right) r dr 
  =  \int_{r_1}^{r_2}
  \left( \pm \frac{1}{r} (\cos u)_r + \frac{1}{2}(u_r \pm \frac{1}{r} \sin u)^2 \right) r dr \\
  &= \pm(\cos u(r_2) - \cos u(r_1)) + \frac{1}{2} \int_{r_1}^{r_2} 
  (u_r \pm \frac{1}{r} \sin u)^2  r dr \\
  &\geq  \pm (\cos u(r_2) - \cos u(r_1)).
\end{split}
\end{equation}
Note that by one-dimensional Sobolev embedding, if $E_e(u) < \infty$, then
$u \in C((0,\infty))$. Moreover, using~\eqref{topo}, the limits
$\lim_{r \to 0+, \infty} \cos u(r)$, and hence  
$u(0) = \lim_{r \to 0+} u(r)$, $u(\infty) = \lim_{r \to \infty} u(r)$ exist,
and (by finite energy again) are multiples of $\pi$:
\begin{equation} \label{cont}
  E_e(u) < \infty \;\; \implies \;\;
  u \in C([0,\infty]), \quad u(0), u(\infty) \in \pi \Z.
\end{equation} 
Finiteness of $E_z$ further requires $u(\infty) \in 2 \pi \Z$, 
and we may then assume $u(\infty) = 0$
by the shift $u \mapsto u - u(\infty)$ which leaves energy unchanged. 
Further, by $u \mapsto -u$, $k \mapsto -k$, 
is is enough to consider $n \geq 0$. 
This leads to the family of function spaces
\[
  X_n := \{ u : [0,\infty) \to \R \; | \;  
  E_e(u) < \infty, \;\; \lim_{r \to 0+} u(r) = n \pi, \; 
  \lim_{r \to \infty} u(r) = 0 \}
  \subset C([0,\infty]), \qquad n = 0, 1, 2, \ldots
\]
on which the exchange energy $E_e$ is well-defined.
By~\eqref{number2}, the Skyrmion number is
\[
  u \in X_n \;\; \implies \;\; N = \frac{1}{2} (1 - \cos(n \pi) ) = 
  \left\{ \begin{array}{cc} 1 & n \mbox{ odd } \\
  0 & n \mbox{ even }  \end{array} \right. .
\]

For $n=0$ we denote $X := X_0$. 
Since $|\sin u| \leq |u| \leq C |\sin u|$ for $|u| \leq \frac{\pi}{2}$,
\[
  X := X_0 = \{ u : [0,\infty) \to \R \; | \;  u_r \in L^2, \; \frac{u}{r} \in L^2 \},
\] 
is a Banach space with norm
\[
  \| u \|_X^2 := \int_0^\infty \left( u_r^2 + \frac{u^2}{r^2} \right) r dr.
\]
By writing $u^2(r)$ as the integral of its derivative
and using H\"older's inequality, 
we obtain an elementary embedding inequality
\begin{equation}  \label{embedding}
  u \in X \; \implies \; \| u \|_{L^\infty} \leq  \| u \|_X
\end{equation}
which is used frequently below.

It is easy to check that
$X_n = \bar u_n + X_0$ for any fixed $\bar u_n \in X_n$,
and so $X_n$ is an affine Banach space.

To ensure finiteness of $E^{(\al)}$ we add the condition $u \in L^2$:
since $1 - \cos(u) \leq \min( \frac{1}{2} u^2, 2 )$,
\[
  0 \leq E^{(\al)}(u) \leq \int_0^\infty (1 + |\al|) \frac{1}{2} u^2(r) r \; dr = 
  \frac{1}{2} (1 + |\al|) \| u \|_{L^2}^2.
\]
This also makes $E_{DM}$ finite via H\"older's inequality:
\begin{equation}  \label{DMbound}
\begin{split}
  \left| E_{DM}(u) \right| &\leq \int_0^\infty |\sin(u(r))| |u_r| r \; dr \leq
  \left[ \int_0^\infty \sin^2(u(r)) r \; dr \right]^{\frac{1}{2}} 
  \left[ \int_0^\infty u_r^2(r) r \; dr \right]^{\frac{1}{2}} \\
  &\leq
  2\sqrt{E_a(u)} \sqrt{E_e(u)} \leq 2\sqrt{E^{(\al)}(u)} \sqrt{E_e(u)} 
  < \infty.
\end{split}
\end{equation}

So finally our variational problem is:
\begin{equation}
\label{VarProb0}
  e_{k,\al,\be}^{(n)} := \inf \{ \; E_{k,\al,\be}(u) \; | \; u \in X_n \cap L^2 \},
  \qquad k \in \R, \; \al \leq 1, \; \be > 0, \qquad n \in \{0, 1, 2, \ldots \}.
\end{equation}
$X_0 \cap L^2$ is a Banach space, while for $n \not= 0$,
$X_n \cap L^2 = \bar v_n + X_0 \cap L^2$ for any fixed 
$\bar v_n \in X_n \cap L^2$ is an affine Banach space.
A simple variant of~\eqref{embedding} shows that 
functions in $X_n \cap L^2$ (unlike functions in $X_n$) have a
decay rate:
\begin{equation} \label{embedding2}
  u \in X_n \cap L^2 \subset H^1 \; \implies \; 
  u^2(r) \leq \frac{1}{r} \| u \|_{H^1}^2 \leq 
  \frac{1}{r} \left(2 E_e(u) + \| u \|_{L^2}^2 \right).
\end{equation}

A straightforward argument given in Section~\ref{functional}
shows that solving the variational problem~\eqref{VarProb0} 
in $X_n \cap L^2$ produces a solution of the Euler-Lagrange equation:
\begin{prop} \label{differentiability}
$E_{k,\al,\be}$ is a (Fr\'echet) differentiable function on $X_n \cap L^2$.
If $v \in X_n \cap L^2$ solves~\eqref{VarProb0}, then 
$v \in C^\infty((0,\infty))$ satisfies the Euler-Lagrange equation
\begin{equation}  \label{E-L}
  0 = E_{k,\al,\be}'(v) = -\Delta_r v + \frac{1}{2 r^2} \sin(2v) 
  -k \; \be \; \frac{1}{r} \sin^2(v) + \be^2 \; \left( 1 - \al(1-\cos(v)) \right) \sin(v).
\end{equation} 
Moreover, $v(r)$ and $v_r(r)$ decay exponentially as $r \to \infty$,
and the Pohozaev-type relation
\begin{equation}  \label{poho0}
  \be k E_{DM}(v) + 2 \be^2 E^{(\al)}(v) = 0
\end{equation}
is satisfied. Finally, the map $\hat m : \R^2 \to \S^2$ given by
\begin{equation}   \label{mfromv}
  \hat m(x) =  \left[ \; -\sin(v(r)) \sin(\theta), \; \sin(v(r)) \cos(\theta), \; \cos(v(r)) \; \right]
\end{equation}
satisfies the full Euler-Lagrange equation~\eqref{E-L1}.
\end{prop}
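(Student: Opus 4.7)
The plan is to handle each of the five assertions in turn, relying on Taylor expansion for differentiability, the variational characterization for the Euler--Lagrange equation, and the scaling law for the Pohozaev identity.

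For Fréchet differentiability, I would fix $v \in X_n \cap L^2$ and expand $E_{k,\al,\be}(v + \xi)$ in $\xi \in X \cap L^2$. Each integrand is a smooth function of $u$ and $u_r$, with bounded partial derivatives (since $\sin, \cos$ and their derivatives are uniformly bounded), so Taylor's theorem produces candidate linear and quadratic forms. For the exchange part, $\sin^2(v+\xi) - \sin^2(v) = \sin(2v) \xi + O(\xi^2)$ and the linear form is controlled by $\|\xi\|_X$; the quadratic remainder is controlled using $\|\xi\|_{L^\infty} \leq \|\xi\|_X$ from \eqref{embedding}. The anisotropy/Zeeman term is similar and uses $v \in L^2$ to bound the linear functional. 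For the chiral term, the linear form pairs $\xi_r$ against $\sin^2(v)$ and $\xi$ against $\sin(2v) v_r$, each bounded on $X$ by Cauchy--Schwarz, while the quadratic remainder is handled as above.

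For the Euler--Lagrange equation, varying against $\xi \in C_c^\infty((0,\infty))$ gives the weak form of \eqref{E-L}. Smoothness on $(0,\infty)$ follows by a standard bootstrap: $v \in H^1_{\mathrm{loc}}$ and the right-hand side of \eqref{E-L} is a smooth function of $v$ (with no singular coefficients away from $r=0$), so one iteratively improves regularity using the ODE $-\Delta_r v = g(r,v)$. For exponential decay, write \eqref{E-L} at large $r$ as
\[
  -\Delta_r v + \be^2 v = \frac{1}{r^2} O(v^3) + \frac{k\be}{r} O(v^2) + \be^2 O(v^2),
\]
using $\sin(2v) = 2v + O(v^3)$ and $1 - \cos v = O(v^2)$. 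The decay estimate \eqref{embedding2} gives $v(r) = O(r^{-1/2})$, so the right-hand side is small for $r$ large. Comparing $v$ with a super-solution of the form $A K_0(\be r / 2)$ (or directly constructing an exponentially decaying barrier via the maximum principle for $-\Delta_r + \tfrac{\be^2}{2}$) yields $|v(r)| + |v_r(r)| \lec e^{-c \be r}$.

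For the Pohozaev relation, use the scaling law \eqref{law2}: $E_{k,\al,\be}(v(\la\cdot)) = E_e(v) + \tfrac{k\be}{\la} E_{DM}(v) + \tfrac{\be^2}{\la^2} E^{(\al)}(v)$. Differentiating in $\la$ at $\la = 1$ and using that $v$ is a critical point in the scaling direction (for which smoothness/decay justify the differentiation) gives $-k \be E_{DM}(v) - 2 \be^2 E^{(\al)}(v) = 0$, which is \eqref{poho0}. Finally, for the full Euler--Lagrange equation \eqref{E-L1}, I would invoke the principle of symmetric criticality: by Lemma~\ref{invar} the energy $\E_{k,\al,\be}$ is invariant under the $SO(2)$-action \eqref{equivariant} and the $\Z_2$-action \eqref{corot}, whose common fixed-point set is precisely the co-rotational class \eqref{corotational}. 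A critical point of the restricted functional is therefore a critical point of the full functional, which is equivalent to \eqref{E-L1}. Alternatively, one can substitute \eqref{mfromv} directly into \eqref{E-L1} in polar coordinates and verify that each component reduces to \eqref{E-L}.

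The main obstacle I anticipate is the interplay between regularity at $r=0$ and the exponential decay at infinity. Near $r=0$ the radial form \eqref{E-L} has a singular coefficient $1/r^2$, and the cleanest way to handle this is to view $v$ as the profile of a 2D radial function on $\R^2$ and apply 2D elliptic regularity to \eqref{E-L1} (with the topological boundary value $v(0) = n\pi$ encoded in the smoothness of $\hat m$). Extracting the precise exponential rate $c\be$ at infinity likewise requires care to dominate the semilinear terms by a fraction of $\be^2 v$, which is what dictates using a super-solution with exponent $\be / 2$ rather than $\be$.
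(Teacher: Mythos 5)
Your proposal is essentially correct and covers all five claims, with two genuine deviations from the paper's route that are worth flagging. For the Pohozaev identity, you differentiate the explicit scaling law~\eqref{law2} in $\la$ at $\la = 1$ and use that $v$ is a minimizer; the paper instead multiplies~\eqref{E-L} by $r v_r$, integrates over $[s,R]$, and passes to the limit after carefully killing the boundary terms (which is where the exponential decay actually gets used). Your route is cleaner: since $E_{k,\al,\be}(v(\la\cdot)) = E_e(v) + \frac{\be k}{\la} E_{DM}(v) + \frac{\be^2}{\la^2} E^{(\al)}(v)$ is an explicit rational function of $\la$, no smoothness or decay is actually needed to differentiate it — just the minimizing property of $v$ and the already-established finiteness of the three energies on $X_n \cap L^2$, so you are being slightly over-cautious about justifying the differentiation. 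For the passage from~\eqref{E-L} to~\eqref{E-L1}, you lead with Palais' symmetric criticality, but this is not quite a free lunch: Palais' theorem requires $\E_{k,\al,\be}$ to be a $C^1$ functional on a Banach (or Riemannian) manifold of maps $\R^2 \to \S^2$ on which $O(2)$ acts by isometries, and the paper never constructs that ambient differentiable structure — it only proves differentiability of the reduced functional on $X_n \cap L^2$. The paper instead does the direct computation you list as a backup: substitute~\eqref{mfromv} into~\eqref{E-L1}, expand $P_{T_{\hat m}\S^2}$ of each term in the basis $\{\hat e\}$ of the tangent line, and check that the coefficient of $\hat e$ is exactly the left-hand side of~\eqref{E-L}. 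You should promote that backup to the primary argument, or else supply the missing functional-analytic setup for Palais. For the exponential decay the paper simply cites~\cite{li2018stability} and notes the fundamental solution is $K_1(\be r)$ (not $K_0$, because of the $1/r^2$ term), but your barrier with $K_0(\be r/2)$ does give a valid super-solution for $-\Delta_r + \frac{1}{r^2} + \be^2$ at large $r$, so that step is fine as a self-contained alternative.
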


Taking $\la = \be$ in the scaling relation~\eqref{law2}, we have
\begin{equation}  \label{minscale}
  E_{k,\al,\be}(u(\be \cdot)) = E_{k,\al,1}(u), \qquad
  e_{k,\al,\be}^{(n)} =  e_{k,\al,1}^{(n)}, 
\end{equation}
and so we may fix $\be = 1$ and consider the minimization problem
\begin{equation}  \label{VarProb}
  e_{k,\al,1}^{(n)} := \inf \{ \; E_{\k,\al,1}(u) \; | \; u \in X_n \cap L^2 \},
  \qquad k \in \R, \; \al \leq 1.
\end{equation}

\subsection{Lower bounds}

To begin the study of~\eqref{VarProb}, 
we investigate if the energy is bounded from below.
First, using~\eqref{DMbound} and Young's inequality yields
\begin{equation} \label{basiclower}
\begin{split}
  E_{k,\al,1}(u) &\geq E_e(u) + E^{(\al)}(u) - 
  \min \left( k^2 E_e(u) + E^{(\al)}(u), \;  E_e(u) + k^2 E^{(\al)}(u) \right) \\
  &= \max \left( (1 - k^2) E_e(u), \; (1-k^2) E^{(\al)}(u) \right).
\end{split}
\end{equation}

Second:
\begin{lem}
If $u \in X_n$ satisfies $u(r) = j \pi$ for some $r \in [0,\infty)$, $j \in \Z$, then
$E_e(u) \geq 2 (|n-j| + |j|)$. Therefore,
\begin{equation} \label{bog2}
  u \in X_n \; \implies \; E_e(u) \geq 2 |n|
\end{equation}  
and
\begin{equation} \label{uniupper}
  u \in X_n \; \implies \; \| u \|_{L^\infty} \leq \left( \frac{1}{2} E_e(u) + 1 \right) \pi.
\end{equation}  
\end{lem}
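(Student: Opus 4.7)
My plan is to reduce everything to a pointwise AM-GM bound on the integrand of $E_e$. Since $u_r^2 + \frac{\sin^2 u}{r^2} \ge 2\frac{|u_r \sin u|}{r}$, one obtains
\[
E_e(u) \;\ge\; \int_0^\infty |u_r \sin u|\,dr \;=\; \int_0^\infty \bigl|(\cos u)_r\bigr|\,dr,
\]
which is precisely the total variation of $\cos u$ on $[0,\infty)$. Everything else will reduce to bounding this total variation from below using the continuity statement~\eqref{cont} and the intermediate value theorem.

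For the main inequality I would take $u(r_0) = j\pi$ as given and build a ladder of points at which $u$ attains each intermediate integer multiple of $\pi$. Since $u \in C([0,\infty])$ with $u(0) = n\pi$ and $u(\infty) = 0$, the IVT guarantees that $u$ attains every integer multiple of $\pi$ between $j\pi$ and $n\pi$ somewhere in $[0,r_0]$, and every integer multiple between $0$ and $j\pi$ somewhere in $[r_0,\infty)$. In the case $n > j$, I would set $t_j = r_0$ and recursively $t_\ell = \sup\{t < t_{\ell-1} : u(t) = \ell\pi\}$ for $\ell = j+1,\ldots,n$; closedness of $\{u = \ell\pi\}$ makes the supremum attained, while IVT between $u(0) = n\pi$ and $u(t_{\ell-1}) = (\ell-1)\pi$ makes the set nonempty, producing a strictly decreasing chain in $[0,r_0]$ along which $\cos u$ alternates between $\pm 1$. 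The total variation of $\cos u$ on $[0,r_0]$ is therefore at least $2|n-j|$. The analogous construction on $[r_0,\infty)$ (using $u(\infty)=0$) adds at least $2|j|$, so $E_e(u) \ge 2(|n-j|+|j|)$. Setting $r_0 = 0$, $j = n$ recovers~\eqref{bog2}.

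For~\eqref{uniupper}, let $L := \|u\|_{L^\infty}$, finite by~\eqref{cont}. If $L \le \pi$ the estimate is immediate; otherwise set $j := \lfloor L/\pi\rfloor \ge 1$. Since $u(\infty)=0$ and $u \in C([0,\infty])$, $|u|$ attains $L$ at some finite $r^*$. If $u(r^*) = L$, then $L \ge u(0) = n\pi$ forces $j \ge n$, and IVT on $[r^*,\infty)$ provides a point where $u = j\pi$; the main claim gives $E_e(u) \ge 2(2j - n)$, which combined with~\eqref{bog2} yields $j \le \tfrac{1}{2}E_e(u)$ and hence
\[
L < (j+1)\pi \;\le\; \bigl(\tfrac{1}{2}E_e(u) + 1\bigr)\pi.
\]
If instead $u(r^*) = -L$, IVT gives a point where $u = -j\pi$, and the main claim applied with $-j$ in place of $j$ gives $E_e(u) \ge 2(n + 2j)$, so $j \le \tfrac{1}{4}E_e(u)$ and the same bound on $L$ follows.

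I expect the main technical point to be the ladder construction: one must verify that each supremum defining $t_\ell$ is attained and lies strictly below the previous point, which essentially uses both the continuity of $u$ from~\eqref{cont} and the IVT. The $L^\infty$ bound then requires a small amount of sign-bookkeeping in applying the main claim, depending on whether $\|u\|_{L^\infty}$ is attained on the positive or negative side.
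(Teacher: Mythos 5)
Your proposal is correct and takes essentially the same route as the paper: the AM-GM bound $E_e(u) \ge \int_0^\infty |(\cos u)_r|\,dr$ is an equivalent form of the paper's localized identity~\eqref{topo}, and the ladder of points where $u$ hits consecutive integer multiples of $\pi$ is the same construction the paper describes as ``non-intersecting sub-intervals'' on which $u$ jumps by $\pi$. The only stylistic difference is that you prove~\eqref{uniupper} directly, with a sign case-split and a short detour through~\eqref{bog2}, whereas the paper reaches the same conclusion by a slightly shorter contradiction argument.
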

\begin{proof}
If $r \not= 0$ and $j \not= n$, by continuity of $u$, 
there are non-intersecting sub-intervals  
$(r_m, \; r_{m+1}) \subset [0, \; r]$,  $m = 1, 2, \ldots, |n-j|$
with $|u(r_m) - u(r_{m+1})|  = \pi$. By~\eqref{topo},
$E_e^{[r_m, \; r_{m+1}]} (u) \geq 2$, and so
\[
  E_e^{[0, \; r]} (u) \geq 2|n-j|.
\]
Similarly, if $j \not= 0$, there are non-intersecting sub-intervals  
$(r_m, \; r_{m+1}) \subset [r, \; \infty)$,  $m = 1, 2, \ldots, |j|$,
$r_{|j| + 1} = \infty$, with $|u(r_m) - u(r_{m+1})|  = \pi$. By~\eqref{topo},
$E_e^{[r_m, \; r_{m+1}]} (u) \geq 2$, and so
\[
  E_e^{[r, \; \infty)} (u) \geq 2|j|.
\]
The first statement of the lemma follows from summing the two bounds above.
Then~\eqref{bog2} follows from taking $r=0$ and $j=n$.
For~\eqref{uniupper}: if  
$\| u \|_{L^\infty} > \left( \frac{1}{2} E_e(u) + 1 \right) \pi$, then
by continuity of $u$, for some $r \in [0,\infty)$, $u(r) = j \pi$ where 
$j \in \Z$ with $|j| \leq  \frac{1}{2} E_e(u) + 1 < |j| + 1$. Then by the first statement,
$E_e(u) \geq 2 |j| > E_e(u)$, a contradiction.
\end{proof}

Combining~\eqref{basiclower} and~\eqref{bog2} gives:
\begin{equation}  \label{lb}
  u \in X_n, \; |k| \leq 1 \;\; \implies \;\; e_{k,\al,1}^{(n)} \geq 2|n|(1 - k^2).
\end{equation}

We are not sure if the condition $|k| \leq 1$ is necessary for 
boundedness from below of the energy, but we do have
the following partial converse to~\eqref{lb}, proved in Section~\ref{unbounded}:
\begin{prop} \label{unbound}
\begin{equation}  \label{nolb}
  |k| > 4 - \frac{3}{2} \al \;\; \implies \;\; e_{k,\al,1}^{(n)} = -\infty.
\end{equation} 
\end{prop}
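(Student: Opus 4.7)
The plan is to exhibit, for each $n \geq 0$ and each $k$ with $|k| > 4 - 3\alpha/2$, a one-parameter family $\{u_R\}_{R \geq 1} \subset X_n \cap L^2$ whose energy $E_{k,\alpha,1}(u_R)$ tends to $-\infty$ as $R \to \infty$. I work with $k > 0$; the case $k < 0$ follows from the identity $E_{k,\alpha,1}(-u) = E_{-k,\alpha,1}(u)$ applied to the $k>0$ family, together with a fixed bounded modification near the origin to restore the boundary value $u(0) = n\pi$. The key structural idea is that $1 - \cos(2\pi) = 0$, so that a profile with a large plateau at height $u = 2\pi$ produces no contribution to $E^{(\alpha)}$ on the plateau. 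This eliminates the obstructing $R^2$-term that ruins more naïve constructions (e.g.\ a plateau at $u=\pi$), leaving each of the three energy pieces linear in $R$.

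Treating $n = 2$ first, pick $L > 0$ and define
\[
u_R(r) = \begin{cases} 2\pi & 0 \leq r \leq R, \\ \tfrac{2\pi}{L}(R+L-r) & R < r < R+L, \\ 0 & r \geq R+L. \end{cases}
\]
Then $u_R \in X_2 \cap L^2$, and on $[0,R]$ every energy density vanishes. On $[R,R+L]$, $u_r = -2\pi/L$, and the substitution $s = (R+L-r)/L \in (0,1)$ turns each integral into one in $u = 2\pi s$ with weight $r = R + L(1-s)$. Elementary trigonometric integrals ($\int_0^1 \sin^2(2\pi s)\,ds = \tfrac{1}{2}$, $\int_0^1 (1-\cos(2\pi s))\,ds = 1$, $\int_0^1 (1-\cos(2\pi s))^2\,ds = \tfrac{3}{2}$) yield, to leading order in $R$,
\[
E_e(u_R) = \frac{2\pi^2 R}{L} + O(1), \quad E_{DM}(u_R) = -\pi R + O(L), \quad E^{(\alpha)}(u_R) = \bigl(1 - \tfrac{3\alpha}{4}\bigr) R L + O(L^2),
\]
so
\[
E_{k,\alpha,1}(u_R) = R\left[ \frac{2\pi^2}{L} - k\pi + \left(1 - \tfrac{3\alpha}{4}\right) L \right] + O(1).
\]

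Minimizing the bracket in $L$ gives $-k\pi + \pi\sqrt{8 - 6\alpha}$, attained at $L_* = \pi\sqrt{8/(4 - 3\alpha)}$. The inequality $(4 - \tfrac{3\alpha}{2})^2 - (8 - 6\alpha) = 8 - 6\alpha + \tfrac{9\alpha^2}{4} > 0$ holds for all real $\alpha$ (discriminant $36 - 72 < 0$), so the hypothesis $k > 4 - \tfrac{3\alpha}{2}$ implies $k > \sqrt{8 - 6\alpha}$, making the bracket strictly negative at $L = L_*$. Hence $E_{k,\alpha,1}(u_R) \to -\infty$, giving $e_{k,\alpha,1}^{(2)} = -\infty$. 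For $n \neq 2$, fix a cutoff $\psi \in C_c^\infty([0,\infty))$ with $\psi(0) = 1$ and $\mathrm{supp}\,\psi \subseteq [0,1]$, and set $\tilde u_R = u_R + (n-2)\pi\,\psi$. For $R > 1$, $\tilde u_R \in X_n \cap L^2$, $\tilde u_R \equiv u_R$ on $[1,\infty)$, and on $[0,1]$ both functions are smooth and bounded; so each of $E_e$, $E_{DM}$, $E^{(\alpha)}$ changes by an amount bounded uniformly in $R$, and $E_{k,\alpha,1}(\tilde u_R) = E_{k,\alpha,1}(u_R) + O(1) \to -\infty$.

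The main obstacle is the asymptotic bookkeeping: one must verify in each energy expansion that the remainder terms are genuinely $O(1)$ in $R$ (not $O(R)$ or $O(R^2)$), which amounts to checking that every integral either lives on $[0,R]$ where the integrand vanishes, or transforms after substitution into an integral whose $R$-dependence is linear and whose coefficient we have computed. One also needs the modification near the origin to contribute only a bounded change, and the comparison $4 - \tfrac{3\alpha}{2} > \sqrt{8 - 6\alpha}$ to go in the required direction. The trigonometric integrals themselves are standard but must be tracked so that the decisive linear-in-$R$ DM contribution is not swamped by subleading corrections.
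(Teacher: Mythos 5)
Your construction is correct, and it proceeds by a genuinely different route than the paper's. The paper uses a steeply-rising ramp from $n\pi$ up to $M\pi$ (slope $t$) followed by a slope-$(-1)$ decline, takes $M \gg t \gg 1$, and averages the trigonometric terms over many oscillations to obtain energies that are $O(M^2)$; the crossover condition is then $|k| > 4 - \tfrac{3\alpha}{2}$. You instead use a wide plateau at height $2\pi$ (so $1 - \cos u \equiv 0$ there, killing the $R^2$ contribution from $E^{(\alpha)}$) followed by a linear ramp of fixed width $L$, and the energies are linear in $R$. Both proofs are correct, and the algebra in yours checks out: the exchange contribution is $\tfrac{2\pi^2 R}{L} + O(1)$, the DM contribution is $-\pi R + O(L)$, the $E^{(\alpha)}$ contribution is $\bigl(1 - \tfrac{3\alpha}{4}\bigr) L R + O(L^2)$, and optimizing the bracket over $L$ gives the critical value $L_* = \pi\sqrt{8/(4-3\alpha)}$ at which the coefficient of $R$ becomes $\pi\bigl(\sqrt{8-6\alpha} - k\bigr)$. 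Since $\bigl(4 - \tfrac{3\alpha}{2}\bigr)^2 - (8-6\alpha) = \tfrac{9}{4}\alpha^2 - 6\alpha + 8 > 0$ (discriminant $< 0$), your argument in fact establishes the stronger statement that $e^{(n)}_{k,\alpha,1} = -\infty$ already for $|k| > \sqrt{8-6\alpha}$, which partially answers item~1 of the open-question remark following Proposition~\ref{X1} (the gap $1 < |k| \leq 4 - \tfrac{3\alpha}{2}$ is narrowed to $1 < |k| \leq \sqrt{8-6\alpha}$). The only small point worth making explicit is that the smooth cutoff modification near the origin must satisfy $\sin(\tilde u_R(r))/r \in L^2$ near $r=0$; this holds automatically since $\tilde u_R(0) = n\pi$ and $\tilde u_R$ is Lipschitz at $0$, so $\sin(\tilde u_R) = O(r)$.
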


\subsection{Upper bounds}

Here we specialize to the case $n=1$, and show an upper-bound
which ensures compactness of minimizing sequences.
For this, we use the well-known explicit minimizers of 
the exchange energy $E_e$ on $X_1$, which saturate the inequality~\eqref{bog2}:
\[
  E_e(u) = 2, \; u \in X_1 \;\; \implies \;\;
  u(r) = Q(r/s) \; \mbox{ for some } s > 0, \quad
  Q(r) = \pi - 2 \tan^{-1}(r).
\]
This follows from~\eqref{topo}, since equality in 
$E_e(u) \geq 2$ holds if and only if $u_r + \frac{1}{r}\sin(u) = 0$
(almost everywhere) on $(0, \; \infty)$, and
the only solutions to this ode in $X_1$  are the given ones.

\begin{prop} \label{X1}
$e_{k,\al,1}^{(1)} \leq 2$. Moreover,
\begin{equation} \label{nobub} 
  k > 0 \; \implies \; e_{k,\al,1}^{(1)} < 2.
\end{equation}
\end{prop}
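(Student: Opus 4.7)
My plan is to exhibit a two-parameter family of trial functions in $X_1 \cap L^2$ built from the harmonic-map bubble $Q$, and to compute $E_{k,\al,1}$ on them asymptotically. The bubble itself saturates $E_e = 2$ but lies outside $L^2$ (and has $E^{(\al)}(Q) = +\infty$) because of its $2/r$ decay at infinity, so it is not admissible. The fix is to truncate $Q$ at a large radius $L$ and then rescale by a small factor $s$. The crucial sign input is $E_{DM}(Q) = -2 < 0$, which produces a negative linear-in-$s$ term that makes the bound strict once $k>0$.

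First I would construct $u_L$: set $u_L(r) = Q(r)$ for $r \leq L$ and glue on a smooth, rapidly-decaying tail on $[L,\infty)$, for instance $u_L(r) = Q(L)\eta(r-L)$ with $\eta \in C_c^\infty([0,\infty))$, $\eta(0)=1$. Since $Q(L) = O(1/L)$, the tail is in $L^2$, and elementary estimates yield
\begin{equation*}
  E_e(u_L) = 2 + O(1/L), \qquad E_{DM}(u_L) = -2 + o(1), \qquad E^{(\al)}(u_L) = O(\log L),
\end{equation*}
where the logarithmic growth in the last quantity arises from $(1-\cos Q)\,r \sim 2/r$ on $[1,L]$, with constant uniform for $\al \in [-A,1]$ in view of the pointwise bound $|(1-\cos u) - \tfrac{\al}{2}(1-\cos u)^2| \lec (1+|\al|)u^2$ for $|u|\leq \pi$. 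A direct integration of $\sin^2(Q) Q_r r$ gives the explicit value $E_{DM}(Q) = -2$.

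Next I would introduce the scaling $v_{s,L}(r) := u_L(r/s)$; by \eqref{law2} applied at the profile level,
\begin{equation*}
  E_{k,\al,1}(v_{s,L}) = E_e(u_L) + k s\,E_{DM}(u_L) + s^2 E^{(\al)}(u_L).
\end{equation*}
For the non-strict bound $e^{(1)}_{k,\al,1}\leq 2$, I would let $s = s(L) \to 0^+$ slowly enough that $s^2 \log L \to 0$ (e.g.\ $s = 1/\log L$); every term beyond the leading $2$ then vanishes, giving $E_{k,\al,1}(v_{s,L}) \to 2$. For the strict bound \eqref{nobub} under $k>0$, I would instead optimize in $s$: the leading form $2 - 2 k s + c_\al\,s^2 \log L$, with $c_\al > 0$, is minimized at $s_\ast = k/(c_\al \log L)$, yielding
\begin{equation*}
  E_{k,\al,1}(v_{s_\ast,L}) = 2 - \frac{k^2}{c_\al \log L} + o\!\left(\frac{1}{\log L}\right) < 2
\end{equation*}
for all sufficiently large $L$.

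The main delicate point is verifying that the errors produced by the cutoff region and by the higher-order contributions of $E^{(\al)}$ are strictly $o(1/\log L)$, so that the $O(1/\log L)$ gain coming from the DMI--anisotropy balance survives. This is controlled by the explicit decay of $Q$ and $Q_r$, and it is reassuring that the same scaling $s \sim k/\log(1/k)$ emerges here as appears in the sharper asymptotic formula \eqref{energyThm} of the main theorem.
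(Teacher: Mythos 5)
Your proposal is correct and follows essentially the same strategy as the paper: truncate the bubble $Q$ to get an admissible profile, record $E_e = 2 + \mathrm{error}$, $E_{DM} = -2 + o(1)$, $E^{(\al)} = O(\log L)$, then rescale by a small $s$ so that the linear $-2ks$ term from the DMI energy forces the total below $2$. The only cosmetic differences are your cutoff construction (hard truncation at $r=L$ plus a glued tail, versus the paper's smooth multiplicative cutoff $Q(r)\phi(r/R)$, which gives an $O(R^{-2})$ rather than $O(1/L)$ exchange error) and your choice to optimize $s \sim k/\log L$ explicitly for the strict bound, where the paper simply takes $s = 1/R$ and observes $\tfrac{1}{s}\bigl[E_{k,\al,1} - 2\bigr] \to -2k < 0$; both choices work.
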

\begin{proof}
$Q \not\in L^2$, so to use it as a test function we cut it off: set
\[
  u_R(r) := Q(r) \phi_R(r), \quad 
  \phi_R(r) := \phi(r/R), \quad R > 0,
\]
where $\phi \geq 0$ is a standard smooth cut-off function with 
$\phi(r) = 1$ for $r \leq 1$ and $\phi(r) = 0$ for $r \geq 2$.
Then $u_R \in X_1 \cap L^2$, and as $R \to \infty$ it is easy to check that
\[
  E_e(u_R) = 2 + O(R^{-2}), \qquad
  E^{(\al)}(u_R) = O(\log R), \qquad
  E_{DM}(u_R) = -2 + o(1),
\]
using~\eqref{h3/r}. 
Rescaling, $u^s_R(r) := u_R(r/s)$, $s > 0$, we get
\[
  E_{k,\al.1}(u^s_R) = 2 - 2 k s + O(R^{-2}) + s \; o(1)  + s^2 \; O(\log R), 
\]
so
\[
  \frac{1}{s} \left[ E_{k,\al,1}(u^s_R) - 2 \right] = 
  - 2 k + \frac{1}{s} \; O( R^{-2} ) + o(1) + s \; O(\log R)  .  
\]
Then if we take $R \to \infty$ and $s = 1/R \to 0$, we see
\[
  \frac{1}{s} \left[ E_{\k,\al,1}(u^s_R) -2 \right] \to - 2 k \; , 
\]  
which proves both statements of the proposition.
\end{proof}

\begin{rem}
For $n \geq 2$, 
a test function built of a sequence of $n$ re-scaled, shifted, cut-off 
bubbles $Q$, connecting $u = n \pi$ at $r=0$ to $u = 0$ at $r = \infty$, with 
(small) length scales whose ratios are diverging, shows that   
$e_{k,1,\al}^{(n)} \leq 2 n$.
\end{rem}

\begin{rem}
Note that~\eqref{lb}, \eqref{nolb}, and~\eqref{nobub}
still leave open some questions about the minimal energy
$e_{k,\al,1}^{(n)}$. For example:
\begin{enumerate}
\item 
is $e_{k, \al,1}^{(n)} = -\infty$ for $1 < |k| \leq 4 - \frac{3}{2}\al$?
\item
for $n=1$ and $-1 < k < 0$, is $e_{k,\al,1}^{(1)} = 2$ or $< 2$?
\end{enumerate}
\end{rem}

\subsection{Existence of minimizer}

We now show that for $n=1$ and $0 < k < 1$,
lower bound~\eqref{lb} and upper bound~\eqref{nobub}
imply existence of a minimizer for problem~\eqref{VarProb}:
\begin{thm}  \label{minexist}
Let $n=1$, $k \in (0,1)$, $\al \leq 1$. 
There is $v \in X_1 \cap L^2$ with $E_{k,\al,1}(v) = e_{k,\al,1}^{(1)}$.
\end{thm}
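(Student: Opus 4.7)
The plan is the direct method, combined with a concentration-compactness argument to preclude loss or creation of topological degree in the limit. Take $\{u_j\}\subset X_1\cap L^2$ with $E_{k,\alpha,1}(u_j)\to e:=e_{k,\alpha,1}^{(1)}$. By Proposition~\ref{X1}, $e<2$; combining this with the lower bound~\eqref{basiclower} and the hypothesis $0<k<1$ yields uniform bounds on $E_e(u_j)$ and $E^{(\alpha)}(u_j)$, and then~\eqref{uniupper} gives a uniform $L^\infty$ bound. Since $(u_j)_r$ is bounded in $L^2(r\,dr)$, on any compact $[a,b]\subset(0,\infty)$ the sequence $u_j$ is uniformly H\"older; a diagonal Arzel\`a-Ascoli argument produces a subsequence (still denoted $u_j$) converging pointwise and locally uniformly on $(0,\infty)$, and weakly in $H^1_{\mathrm{loc}}$, to some limit $u_\infty$.

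By Fatou applied to the non-negative integrands of $E_e$ and (using $\alpha\leq 1$) $E^{(\alpha)}$, both are finite at $u_\infty$. Then~\eqref{cont} gives $u_\infty\in C([0,\infty])$ with boundary values in $\pi\Z$, finiteness of $E^{(\alpha)}$ forces $u_\infty(\infty)=0$, and near infinity the elementary bound $1-\cos u\geq u^2/\pi^2$ (valid once $|u_\infty|\leq\pi$) combined with $E^{(\alpha)}(u_\infty)<\infty$ places $u_\infty\in L^2$. Thus $u_\infty\in X_{n_\infty}\cap L^2$ for some non-negative integer $n_\infty$.

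The heart of the proof is showing $n_\infty=1$. Suppose $n_\infty\neq 1$. The topological mismatch between $u_j$ (degree one) and $u_\infty$ (degree $n_\infty$) forces a ``half-bubble'' of variation $\pi$ to be localized on a shrinking or escaping interval $I_j=[\rho_j^-,\rho_j^+]$ with $|u_j(\rho_j^-)-u_j(\rho_j^+)|\to\pi$ and either $\rho_j^-\to 0$ or $\rho_j^+\to\infty$. The localized topological inequality~\eqref{topo} then gives $E_e^{[\rho_j^-,\rho_j^+]}(u_j)\geq 2-o(1)$, while on $I_j$ the contributions of $E^{(\alpha)}$ and, via~\eqref{DMbound}, $E_{DM}$ vanish in the limit, since these integrate $(1-\cos u)\,r\,dr$ over a shrinking set (near $0$) or a set where $u_\infty=0$ (near $\infty$). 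On the complement of $I_j$, local uniform convergence plus Fatou and~\eqref{basiclower} give $E_{k,\alpha,1}^{[0,\infty)\setminus I_j}(u_j)\to E_{k,\alpha,1}(u_\infty)\geq 0$. Summing, $e\geq 2+E_{k,\alpha,1}(u_\infty)\geq 2$, contradicting $e<2$. Hence $n_\infty=1$.

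With $u_\infty\in X_1\cap L^2$ in hand, lower semi-continuity of $E_{k,\alpha,1}$ concludes the proof. The $E_e$ and $E^{(\alpha)}$ terms are handled by Fatou; for the chiral term one writes $E_{DM}(u)=-\int_0^\infty [u/2-\sin(2u)/4]\,dr$ (integration by parts with boundary contributions vanishing thanks to $u(0)\in\pi\Z$ and $u(\infty)=0$) and applies dominated convergence via the uniform $L^\infty$ bound and the cubic vanishing of the integrand at $u=0$. The main obstacle is the non-escape step above: the strict upper bound $e<2$ from Proposition~\ref{X1} is precisely what forbids a quantized topological unit of energy from escaping to or emerging from either end of $(0,\infty)$.
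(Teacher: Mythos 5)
Your overall strategy (direct method plus a concentration--compactness argument that uses the strict bound $e_{k,\al,1}^{(1)}<2$ to exclude escape of a topological unit of energy) is the same as the paper's, and the degree-quantization dichotomy you set up is essentially the paper's ``jump'' lemma. The integration-by-parts plus dominated-convergence treatment of $E_{DM}$ is a nice alternative to the paper's splitting argument for that term. However, there is a genuine gap at the step where you identify $u_\infty$.

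You assert that ``finiteness of $E^{(\alpha)}$ forces $u_\infty(\infty)=0$'' and that the bound $1-\cos u\geq u^2/\pi^2$ then places $u_\infty\in L^2$. Neither claim is available at this point in the argument. For $\al=1$ one has $E^{(1)}=E_a=\tfrac12\int\sin^2(u)\,r\,dr$, whose integrand vanishes at \emph{every} multiple of $\pi$, so finiteness of $E^{(1)}(u_\infty)$ is compatible with $u_\infty(\infty)=\pi$ or $2\pi$; more generally, for any $\al\le 1$, $e^{(\al)}(2m\pi)=0$, so a limit value $2m\pi$, $m\neq 0$, is not excluded by energy finiteness alone. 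The coercivity $1-\cos u\gec u^2$ (or $\sin^2 u\gec u^2$ when $\al=1$) holds only once one knows $|u_\infty|$ is bounded away from $\pi$ at large $r$, which is exactly the point at issue. In fact the local uniform limit $u_\infty$ of a minimizing sequence in $X_1\cap L^2$ need not a priori lie in $L^2$: weak local convergence transmits no information about behaviour at $r=\infty$, and the uniform $L^\infty$ bound $\|u_j\|_\infty\leq(\tfrac12 E_e(u_j)+1)\pi$ allows $\|u_j\|_\infty$ to exceed $2\pi$ when $k$ is close to $1$. The paper closes this gap with a quantitative Strauss-type decay estimate~\eqref{Strauss}, $1-\cos(u_j(r))+\tfrac12\sin^2(u_j(r))\lec\frac{1}{r}\sqrt{E^{(\al)}(u_j)E_e(u_j)}$, applied \emph{uniformly} to the minimizing sequence. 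This yields an $R$ independent of $j$ with $|u_j(r)|\le\pi/2$ for $r\ge R$, hence a uniform $L^2$ (and $H^1$) bound, from which $u_\infty\in L^2$ and $u_\infty(\infty)=0$ follow. The same uniform decay $|u_j(r)|\lec r^{-1/2}$ is also what supplies the $L^1(dr)$ dominating function $\min(1,r^{-3/2})$ that your dominated-convergence step for $E_{DM}$ silently requires; the uniform $L^\infty$ bound alone does not produce an integrable majorant on $[1,\infty)$.

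A secondary issue: in your concentration dichotomy you allow a half-bubble interval $I_j$ with $\rho_j^+\to\infty$ and claim the $E^{(\al)}$ and $E_{DM}$ contributions there vanish because ``$u_\infty=0$ near $\infty$''. But on $I_j$ it is $u_j$, not $u_\infty$, that carries a $\pi$-variation, so $1-\cos u_j$ is not small on $I_j$, and $r\,dr$ is large there; the argument as written does not control those terms. Once the uniform decay of $u_j$ and $u_\infty(\infty)=0$ are in hand, however, the degree mismatch is forced to concentrate at $r=0$, where the shrinking-measure argument you give does work, so this branch disappears once the first gap is filled.
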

\begin{rem}
Theorem~\ref{minexist}, together with Proposition~\ref{differentiability},
prove the existence part of Theorem~\ref{mainthm}.
\end{rem}

The proof is an elementary concentration-compactness-type argument,
with condition~\eqref{nobub} used to exclude concentration.
This in the spirit of~\cite{melcher2014chiral}, but much 
simpler because of the co-rotational symmetry. For this reason,
we give the details in the appendix, Section~\ref{existence}.

For the case $\al=0$ (no anisotropy), it is shown in~\cite{li2018stability} that
for $k \ll 1$, the minimizing profile $v(r)$ is monotone.
For completeness, we prove
in Section~\ref{monotoneprofile} of the appendix
that monotonicity holds also in the opposite case
$\al = 1$ (no external field):
\begin{prop} \label{monotonicity}
For $\al = 1$ and $0 < k \ll 1$, a minimizing profile $v(r)$ is monotonically
decreasing.
\end{prop}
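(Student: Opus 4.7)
My plan is a direct ODE analysis of the minimizer's profile $v$, exploiting the algebraic structure special to $\al = 1$. By Theorem~\ref{minexist} and Proposition~\ref{differentiability} a smooth minimizer $v \in X_1 \cap L^2$ exists, satisfies the Euler--Lagrange equation~\eqref{E-L}, and decays (together with $v_r$) exponentially at infinity. For $\al = \be = 1$, the last term in~\eqref{E-L} equals $\cos v \sin v = \tfrac{1}{2}\sin(2v)$, and multiplying the equation by $r$ yields the compact form
\[
  (r v_r)_r \;=\; \sin v \,\big[\cos v \,(r + 1/r) \,-\, k \sin v\big],
\]
with $r v_r \to 0$ at both $r \to 0^+$ (by smoothness of $v$ at the origin) and $r \to \infty$ (by exponential decay). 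The strategy is to show the right-hand side changes sign from negative to positive \emph{exactly once}; combined with vanishing boundary values, this will force $r v_r \leq 0$ throughout, i.e., $v$ is monotonically decreasing.

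I would first show $v(r) \in (0, \pi)$ on $(0, \infty)$. Boundedness $v \in [0, \pi]$ follows from a competitor argument: modifying (by reflection) any excursion outside $[0, \pi]$ leaves $E_e$ and $E^{(1)} = \tfrac{1}{2}\int \sin^2 v \; r\,dr$ unchanged (both depending only on $v_r^2$ and $\sin^2 v$), while the DM term can be controlled by an appropriate choice of modification. Strict interiority then follows from ODE uniqueness at $v=0$ or $v=\pi$. With $\sin v > 0$ on $(0, \infty)$, the sign of $(r v_r)_r$ coincides with that of $G(r) := \cos v(r)(r + 1/r) - k \sin v(r)$, and $G \to -\infty$ as $r \to 0^+$ while $G \to +\infty$ as $r \to \infty$, so $G$ has at least one zero. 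At any such zero, $\cot v(r) = k/(r + 1/r) \in (0, k/2]$, confining $v(r)$ to the narrow band $B_k := (\pi/2 - O(k), \pi/2)$.

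The main obstacle is showing $G$ has \emph{exactly} one zero, which I would argue by contradiction. A leading-order expansion at $r = 0$ gives $v(r) \sim \pi - c r$ with $c > 0$, so $v_r(0^+) < 0$. Non-monotonicity would thus produce a first interior critical point $r_0$ which is a local minimum, followed by a subsequent local maximum at $r_1 > r_0$; the second-derivative test applied to the ODE above forces $v(r_0) < \pi/2$ and $v(r_1) \geq \pi/2 - O(k)$. Applying the topological bound~\eqref{topo} on the three segments $[0, r_0]$, $[r_0, r_1]$, $[r_1, \infty)$ gives
\[
  E_e(v) \;\geq\; 2 \,+\, 2 \big(\cos v(r_0) - \cos v(r_1)\big).
\]
Combined with the Pohozaev identity~\eqref{poho0} (which at $\al = \be = 1$ reduces to $E_{k,1,1}(v) = E_e(v) - E^{(1)}(v)$), the strict upper bound $e^{(1)}_{k,1,1} < 2$ from Proposition~\ref{X1}, and the a-priori estimate $E^{(1)}(v) \leq k^2 E_e(v) = O(k^2)$ derived from~\eqref{DMbound} together with~\eqref{poho0}, this forces $\cos v(r_0) - \cos v(r_1) = O(k^2)$ and eliminates all but residual fine oscillations confined to $B_k$ of amplitude $O(k^2)$. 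Ruling out these fine oscillations is the hardest part; I would approach it by a Sturm-type comparison on the linearization of the ODE around the constant value $\pi/2$, using the smallness of $k$ to preclude additional sign changes of $v_r$. Once uniqueness of the zero $r_\ast$ of $G$ is established, $(r v_r)_r < 0$ on $(0, r_\ast)$ and $> 0$ on $(r_\ast, \infty)$, and integration with the vanishing boundary values of $r v_r$ completes the proof.
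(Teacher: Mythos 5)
Your preliminary analysis parallels the paper's lemmas quite closely, even though the packaging differs. You multiply~\eqref{E-L} by $r$ and track the sign of $(rv_r)_r$, which is exactly the structure exploited in the paper's second-derivative-test arguments (Lemmas~\ref{bounds}--\ref{boundsmax}): at a local min $r_0$, $\cos v(r_0) \geq 0$; at a local max $r_1$, either $\cos v(r_1) \leq 0$ or $\cot v(r_1) \leq k/2$; and the topological bound~\eqref{topo} on three segments together with the Pohozaev relation~\eqref{poho0}, $E^{(1)}(v) \leq k^2 E_e(v)$, and $e^{(1)}_{k,1,1} < 2$ squeezes $\cos v(r_0) - \cos v(r_1) = O(k^2)$. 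This matches the paper's Lemma~\ref{bounds3}. (Your competitor argument for $v \in [0,\pi]$ is plausible but sketchy on the DM term; the paper avoids this by running a direct maximum-principle contradiction combined with the energy bound $e^{(1)}_{k,1,1} < 2$.)

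The genuine gap is in the final, hardest step, which you explicitly flag as unresolved. Your proposal is a ``Sturm-type comparison on the linearization around $\pi/2$'' to preclude fine sign changes of $v_r$, but you do not identify a comparison ODE or explain why the smallness of $k$ actually forbids oscillation: linearizing $(rv_r)_r = \sin v[\cos v(r+1/r) - k\sin v]$ at $v = \pi/2 + w$ gives $(rw_r)_r + (r + 1/r)w = -k + O(w^2)$, whose homogeneous part is oscillatory and in which $k$ enters only as a forcing, not a coefficient -- so it is not at all clear a Sturm argument closes the case. The paper's proof resolves this instead by evaluating the \emph{localized Pohozaev identity}~\eqref{prepoho2} at the local minimum $r_1$, where $v_r(r_1) = 0$ kills the boundary derivative term and yields
\[
2E_a^{[0,r_1]}(v) + kE_{DM}^{[0,r_1]}(v) = \tfrac{1}{2}(1 + r_1^2)\sin^2 v(r_1),
\]
with $\sin^2 v(r_1) = 1 - O(k^2)$. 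Combined with $E_a^{[0,r_1]}(v) \leq \tfrac{1}{4}r_1^2$, $E_e^{[0,r_1]}(v) \geq 1 + \cos v(r_1)$, and the trivial lower bound on $[r_1,\infty)$, this forces $E_{k,1,1}(v) \geq \tfrac{5}{2} - O(k^2) > 2$, contradicting $e^{(1)}_{k,1,1} < 2$. This quantitative identity at the critical point is the essential ingredient you are missing; without it (or a fully worked-out substitute), the argument does not close.
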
  

By applying the scaling~\eqref{minscale} to the minimizer provided by
Theorem~\ref{minexist}, we also have:
\begin{cor} \label{existcor}
Let $n=1$, $k \in (0,1)$, $\al \leq 1$, $\be > 0$. 
There is $v \in X_1 \cap L^2$ with $E_{k,\al,\be}(v) = e_{k,\al,\be}^{(1)}$.
\end{cor}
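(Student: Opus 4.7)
The plan is to deduce this from Theorem~\ref{minexist} by a simple rescaling, exploiting the scaling identity~\eqref{minscale}. First, I would apply Theorem~\ref{minexist} to the fixed case $\be = 1$ to obtain some $u \in X_1 \cap L^2$ with $E_{k,\al,1}(u) = e_{k,\al,1}^{(1)}$. Then, given the desired $\be > 0$, I would define $v(r) := u(\be \, r)$, which is the candidate minimizer.

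The next step is to verify that $v$ lies in $X_1 \cap L^2$. The boundary behavior is immediate: $v(0) = u(0) = \pi$ and $v(\infty) = u(\infty) = 0$, so $v$ has the right limits. Since the exchange energy $E_e$ is scale-invariant, $E_e(v) = E_e(u) < \infty$, so $v \in X_1$. For the $L^2$ condition, a straightforward change of variables gives $\|v\|_{L^2}^2 = \be^{-2} \|u\|_{L^2}^2 < \infty$.

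Finally, to identify $v$ as a minimizer, I would use the scaling relation~\eqref{minscale} twice. On one hand,
\[
  E_{k,\al,\be}(v) = E_{k,\al,\be}(u(\be \, \cdot)) = E_{k,\al,1}(u) = e_{k,\al,1}^{(1)}.
\]
On the other hand, the same identity~\eqref{minscale} asserts $e_{k,\al,\be}^{(1)} = e_{k,\al,1}^{(1)}$, because rescaling provides a bijection between $X_1 \cap L^2$ and itself which preserves the two energies up to the indicated change of parameter. Combining these two equalities yields $E_{k,\al,\be}(v) = e_{k,\al,\be}^{(1)}$, completing the proof.

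There is really no hard step here — the entire content is the scale invariance of the problem, already recorded in~\eqref{law2} and~\eqref{minscale}. The only thing to be a bit careful about is confirming that the rescaling $u \mapsto u(\be \, \cdot)$ maps $X_1 \cap L^2$ into itself, which is immediate from the scale-invariance of $E_e$ and the explicit change of variables for the $L^2$ norm noted above.
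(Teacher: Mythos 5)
Your proposal is correct and matches the paper's argument exactly: the paper states that Corollary~\ref{existcor} follows ``by applying the scaling~\eqref{minscale} to the minimizer provided by Theorem~\ref{minexist},'' which is precisely the rescaling $v(r) = u(\be r)$ you describe, with the verification that $v \in X_1 \cap L^2$ and the two applications of the scaling identity filled in.
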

By Proposition~\ref{differentiability}, any such minimizer satisfies the Euler-Lagrange equation~\eqref{E-L}, as well as the Pohozaev-type relation~\eqref{poho0}.

\subsection{Convergence to $Q$, after rescaling}

\begin{thm}  \label{convergetoQ}
Consider a family of minimizers $\{ u_k \} \subset X_1$,
$E_{k,\al_k,\be_k}(u_k) = e^{(1)}_{k,\al_k,\be_k}$, 
indexed by  $(0,1) \ni k \to 0$, with $\al_k \leq 1$ and $\be_k > 0$.
Then there are $r_k > 0$ such that $v_k(r) := u_k (r_k \; r)$ satisfies
\begin{equation}  \label{convergence}
  \| v_k - Q \|_X \to 0
\end{equation}
as $k \to 0$. Moreover 
$E_{k, \al_k, \hat \be_k}(v_k) = e_{k, \al_k, \hat \be_k}^{(1)}$
with $\hat \be_k := r_k \be_k$, and $(1 + |\al_k|) \hat \be_k \to 0$.
\end{thm}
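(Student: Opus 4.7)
The plan is to exploit the fact that the minimum energy $e^{(1)}_{k,\al_k,\be_k}$ is forced down to the topological lower bound $2$ as $k \to 0$, which through the sharp Bogomol'nyi identity forces $u_k$ to be nearly a harmonic bubble $Q(\cdot/s)$, up to its unknown scale $s$; that scale is then fixed by rescaling. First I would show $E_e(u_k) \to 2$: the scale-invariance $e^{(1)}_{k,\al,\be} = e^{(1)}_{k,\al,1}$, combined with the lower bound~\eqref{lb} $e^{(1)}_{k,\al,1} \geq 2(1 - k^2)$ and the upper bound~\eqref{nobub} $e^{(1)}_{k,\al,1} < 2$, gives $e^{(1)}_{k,\al_k,\be_k} \to 2$. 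Young's inequality applied to the Cauchy--Schwarz bound on $E_{DM}$ from~\eqref{DMbound} yields the pointwise bound $E_{k,\al,\be}(u) \geq (1 - k^2) E_e(u)$, so $E_e(u_k) \leq e^{(1)}_{k,\al_k,\be_k}/(1 - k^2) \to 2$, and this is sandwiched with the Bogomol'nyi lower bound~\eqref{bog2}. The sharp identity $E_e(u) - 2 = \frac{1}{2} \| u_r + \sin u / r \|_{L^2}^2$ on $X_1$ (the $+$ case of~\eqref{topo} on $[0,\infty)$, using $u(0) = \pi$, $u(\infty) = 0$) then gives $u_{k,r} + \sin u_k / r \to 0$ in $L^2$.

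Second, I would fix the scale by continuity: since $u_k(0) = \pi$ and $u_k(\infty) = 0$, pick $r_k \in (0,\infty)$ with $u_k(r_k) = \pi/2$, and set $v_k(r) := u_k(r_k r)$. The scaling law~\eqref{law2} together with the invariance of the minimum energy makes $v_k$ a minimizer of $E_{k,\al_k,\hat\be_k}$ with $\hat\be_k := r_k \be_k$; by scale-invariance of $E_e$, we still have $E_e(v_k) \to 2$ and $v_{k,r} + \sin v_k / r \to 0$ in $L^2$, plus the pinning $v_k(1) = \pi/2$. Boundedness in $X_1$ plus Arzel\`a--Ascoli (via $H^1_{\mathrm{loc}}$ compactness for radial functions) gives, along a subsequence, a limit $v_*$ uniformly on compact subsets of $(0,\infty)$, satisfying $v_{*,r} + \sin v_*/r = 0$, $v_* \in X_1$, and $v_*(1) = \pi/2$; the only such function is $Q$, so $v_k \to Q$ locally uniformly (and uniqueness of the limit upgrades to full-sequence convergence). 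To strengthen this to $\| v_k - Q \|_X \to 0$ I would use convergence of $E_e(v_k) \to E_e(Q)$: by weak lower semicontinuity of $\| v_{k,r} \|_{L^2}^2$ and Fatou for $\| \sin v_k / r \|_{L^2}^2$, each summand's $\liminf$ dominates its $Q$-counterpart, and since the sum converges, equality must hold individually. Combined with weak $L^2$ convergence this gives $v_{k,r} \to Q_r$ and $\sin v_k / r \to \sin Q / r$ strongly in $L^2$, and then $\sin v_k - \sin Q = \cos(\xi_k)(v_k - Q)$ together with careful local analysis around the zero of $\cos Q$ at $r = 1$ yields $(v_k - Q)/r \to 0$ in $L^2$.

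Finally, for $(1 + |\al_k|) \hat\be_k \to 0$: since $v_k$ satisfies the Euler--Lagrange equation, the Pohozaev relation~\eqref{poho0} gives $E_{k,\al_k,\hat\be_k}(v_k) = E_e(v_k) - \hat\be_k^2 E^{(\al_k)}(v_k)$, hence $\hat\be_k^2 E^{(\al_k)}(v_k) = E_e(v_k) - e^{(1)}_{k,\al_k,\be_k} \to 0$. Because $v_k \to Q$ locally uniformly and $\int (1 - \cos Q) \, r \, dr = \int 2r/(1+r^2) \, dr = +\infty$, a truncation argument gives $\liminf E^{(\al_k)}(v_k) \geq \int_0^R (1 - \cos Q) \, r \, dr = \log(1 + R^2)$ for every $R$, forcing $\hat\be_k \to 0$; for the $|\al_k|$ factor, when $\al_k < 0$ the stronger lower bound $E^{(\al_k)}(v_k) \geq \frac{|\al_k|}{2} \int (1 - \cos v_k)^2 \, r \, dr$, combined with $\int (1 - \cos v_k)^2 \, r \, dr \to \int (1 - \cos Q)^2 \, r \, dr = 2$, furnishes the needed control. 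The hardest step will be the upgrade from weak plus $C_{\mathrm{loc}}$ convergence to strong $X$-convergence, because the zero of $\cos Q$ at $r = 1$ is exactly the signature of the threshold resonance $h(r) = 2r/(1+r^2)$ of the linearized operator $H$ discussed in the introduction; the normalization $v_k(1) = \pi/2$ is precisely what pins down the scaling freedom and breaks this degeneracy.
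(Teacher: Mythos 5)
Your overall strategy is sound and parallels the paper through the choice of $r_k$ by pinning $v_k(1)=\pi/2$, the derivation of $E_e(v_k)\to 2$, the local‑uniform identification of the limit as $Q$, and the Pohozaev/Fatou argument for $(1+|\al_k|)\hat\be_k\to 0$. The genuine difference, and the genuine gap, is in the upgrade to strong $X$‑convergence. You propose to deduce, from $v_{k,r}\to Q_r$ and $\sin v_k/r\to\sin Q/r$ strongly in $L^2$ (obtained via Radon–Riesz/Scheff\'e), that $(v_k-Q)/r\to 0$ in $L^2$ by inverting $\sin v_k-\sin Q=\cos(\xi_k)(v_k-Q)$. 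This inversion is blocked not only where $\cos$ vanishes (near $r=1$, which local uniform convergence handles) but also on the ends $r\to 0$ and $r\to\infty$: there you need to know that $v_k$ stays uniformly close to $\pi$ (resp.\ $0$) so that $\cos\bigl((v_k+Q)/2\bigr)$ is bounded away from zero, and local uniform convergence on compact subsets of $(0,\infty)$ gives no such uniform control at the endpoints. The paper supplies exactly this missing ingredient by iterating the topological inequality~\eqref{topo} on the subintervals $[0,r]$, $[r,1]$ and $[1,r]$, $[r,\infty)$ to obtain the a priori two‑sided bounds $\frac\pi2-o(1)\le v_k(r)\le\pi+o(1)$ for $r\le 1$ and $-o(1)\le v_k(r)\le\frac\pi2+o(1)$ for $r\ge 1$, which are the uniform endpoint bounds your argument needs. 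Without them, your step ``careful local analysis around $r=1$'' does not close.

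The paper's route to strong convergence is also structurally different and worth noting: rather than splitting $E_e(v_k)$ into the two summands and recombining through a trigonometric inversion, it expands $E_e(Q+w_j)$ using $\sin^2(Q+w)=\sin^2 Q+\sin^2 w+2\sin Q\sin w\cos(Q+w)$ to get $E_e(v_j)=E_e(Q)+E_e(w_j)+(\text{cross})$, shows the cross term vanishes by weak convergence and local uniform convergence, concludes $E_e(w_j)\to 0$, and then uses the same a priori bounds $|w_j|\le\frac\pi2+o(1)$ to get $w_j^2\lec\sin^2 w_j$, hence $\|w_j\|_X^2\lec E_e(w_j)\to 0$. That avoids dividing by $\cos$ entirely. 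Two smaller points: in your tail argument you write $E^{(\al_k)}(v_k)\ge\int_0^R(1-\cos Q)\,r\,dr$, but the correct lower bound uniform over $\al\le 1$ is $E^{(\al)}\ge E_a=\frac12\int\sin^2 u\,r\,dr$ (the $1-\cos u$ bound fails for $\al\in(0,1]$); and you assert $\int(1-\cos v_k)^2 r\,dr\to 2$, which needs justification (carrying mass to infinity is possible a priori)\,---\, the paper only needs the Fatou $\liminf\ge 2$, which is what the argument actually requires.
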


\begin{rem}
The existence of such minimizers is ensured by Corollary~\ref{existcor}.
\end{rem}

\begin{proof}
By rescaling $u_k(r) \mapsto u_k(\be_k r)$ we may assume $\be_k = 1$.
By~\eqref{basiclower}, ~\eqref{bog2}, and~\eqref{lb},
\[
  2 > e^{(1)}_{k,\al_k,1} = E_{k,\al_k,1}(u_k) 
  \geq (1-k^2) E_e(u_k) \geq 2 (1 - k^2) \to 2,
\]
which together with~\eqref{poho0} shows
\begin{equation}  \label{eto2}
  E_{k,\al_k,1}(u_k) 
  = e_{k,\al_k,1}^{(1)} \to 2, \qquad
  E_e(u_k) \to 2, \qquad
  E^{(\al_k)}(u_k) \to 0. 
\end{equation}

Since $u_k \in X_1$ is continuous, there is $r_k > 0$ such that 
$u_k(r_k) = \frac{\pi}{2}$. Set 
\[
  v_k(r) := u_k(r_k r), 
\]
so
\[
  v_k(1) = \frac{\pi}{2}, \;\; E_e(v_k) \to 2, \quad r_k^2 E^{(\al_k)}(v_k)  \to 0.
\] 

The proof of~\eqref{convergence} is a standard variational argument, together with
the use of~\eqref{topo}.
If~\eqref{convergence} fails, then along some subsequence
$v_j := v_{k_j}$, $k_j \to 0$, 
we have $\| v_j - Q \|_X \geq \e$ for some $\e > 0$.
We will contradict this.
Since $\| (v_j)_r \|_{L^2}^2 \leq 2 E_e(v_j) \lec 1$, 
and using the $1D$ (compact) Sobolev embedding
$H^1_{dr}([r_1, r_2]) \subset C([r_1, r_2])$ for any $0 < r_1 < r_2 < \infty$,
there is a further subsequence (still denoted $v_j$) converging uniformly 
on compact subintervals of $(0,\infty)$ to a continuous function $v(r)$, with
$v(1) = \frac{\pi}{2}$. Moreover, 
$(v_j)_r$ converges weakly in $L^2_{r dr}(0,\infty)$ to $v_r$.
By weak lower-semi-continuity of the $L^2$ norm, and Fatou's lemma
\begin{equation}  \label{enbound}
  E_e(v) \leq \liminf E(v_j) = 2.
\end{equation}
By~\eqref{cont}, $v \in C([0,\infty])$ with $v(0)$, $v(\infty) \in \pi \Z$.

We next show $v(0) = \pi$, $v(\infty) = 0$, and $v = Q$.
Apply~\eqref{topo} to $v_j$ on each of the intervals $[0, 1]$ and $[1, \infty]$, 
using~\eqref{enbound}, to conclude.
\begin{equation} \label{partition}
  1 \leq E_e^{[0,1]}(v_j) \leq 1 + o(1), \qquad 
  1 \leq E_e^{[1,\infty)}(v_j) \leq 1 + o(1).
\end{equation}
Then by weak lower-semi-continuity and Fatou,
\begin{equation}  \label{enbound2}
  E_e^{[0,1]}(v) \leq \liminf E^{[0,1]}_e(v_j) = 1, \qquad
  E_e^{[1,\infty)}(v) \leq \liminf E^{[1,\infty)}_e(v_j) = 1.
\end{equation}
Applying~\eqref{topo} for $v_j$ on $[0, r]$ and $[r, 1]$ for $r \leq 1$, using~\eqref{partition}, gives
\begin{equation}  \label{bound1}
  \frac{\pi}{2} - o(1) \leq v_j(r) \leq \pi + o(1), \quad 0 \leq r \leq 1,
\end{equation}
which implies
\[
  \frac{\pi}{2} \leq v(r) \leq \pi, \quad 0 \leq r \leq 1,
\]
and in particular $v(0) = \pi$.
Similarly, applying~\eqref{topo} on $[1, r]$ and $[r, \infty)$ for $r \geq 1$ gives
\begin{equation} \label{bound2}
  -o(1) \leq v_j(r) \leq \frac{\pi}{2} + o(1), \quad 1 \leq r < \infty,
\end{equation}
which implies
\[
  0 \leq v(r) \leq \frac{\pi}{2}, \quad 1 \leq r < \infty,
\]
and in particular $v(\infty) = 0$. 
Then~\eqref{topo} on $[0,\infty)$, with~\eqref{enbound} forces
$v_r + \frac{1}{r} \sin v = 0$ a.e. in $(0,\infty)$.
The unique solution of this ODE with $v(1) = \frac{\pi}{2}$ is $v = Q$.

So
\[
  w_j := v_j - Q \to 0 \; \mbox{ uniformly on compact subintervals of } \; (0,\infty),
  \; \mbox{ and } \; (w_j)_r \to 0 \; \mbox{ weakly in } L^2.
 \]
It remains to show the convergence is strong in $X$.
This will be a consequence of convergence of the energy: 
$2 = E_e(Q) = \lim E_e(v_j)$. The energy relation
\[
  v_j = Q + w_j \quad \implies \quad
  E_e(v_j) = E_e(Q) + E_e(w_j) 
  + \int_0^\infty \left( Q_r (w_j)_r +
  \frac{1}{r^2} \cos (Q + w_j) \sin Q \sin w_j \right) r dr   
\]
is an elementary consequence of trig identities.
Letting $j \to \infty$ here, using the weak convergence of $(w_j)_r$, the 
local uniform convergence of $w_j$, and the facts that
$Q_r \in L^2_{r dr}$, $\frac{1}{r^2} \sin Q \in L^1_{r dr}$, we find
\[
  \lim_{j \to \infty} E_e(w_j) = 0.
\]
Now by~\eqref{bound1} and~\eqref{bound2},
\[
  -\frac{\pi}{2} - o(1) \leq w_j(r) \leq \frac{\pi}{2} + o(1),
\]
we have $w_j^2 \lec \sin^2 w_j$ and so 
\[
  \| w_j \|_X^2 \lec E_e(w_j) \to 0,
\]
which completes the contradiction argument.

By~\eqref{minscale}, $v_k \in X_1$ minimizes $E_{k,\al_k,\hat \be_k}$ with
$\hat \be_k := r_k$. Finally,
\[
  E^{(\al)}(u) \geq
  \int_0^\infty \left( (1 - \cos u) + \al_- \frac{1}{2} (1 - \cos u)^2 \right) r dr 
  \geq \frac{1}{2}(1 + \al_-) \int_0^\infty (1 - \cos u)^2 r dr,
\]
where $\al_- := \max(0,-\al)$, so  by~\eqref{eto2},
\[
  0 \leftarrow \hat\be_k^2 E^{(\al_k)}(v_k) \geq
  \hat \be_k^2 (1 + (\al_k)_-)
  \frac{1}{2} \int_0^\infty (1 - \cos v_k)^2 \; r dr .
\]
By Fatou and~\eqref{h^2/r^2},
\[
   \liminf\limits_{k \to 0} \int_0^\infty (1 - \cos(v_k))^2 r dr \geq
   \int_0^\infty (1 - \cos(Q))^2 r dr = 2 > 0,
\]
so $\hat \be_k^2(1 + (\al_k)_-) \to 0$, and 
since $\al_k \leq 1$, $\hat \be_k^2 (1 + |\al_k|) \to 0$.
\end{proof}

\section{Convergence Rate Estimates }  \label{sec:estimates}

Our main result says that by a more refined re-scaling 
of a sequence of minimizers as in Theorem~\ref{convergetoQ},
we can sharpen the convergence~\eqref{convergence} 
to give quantitative estimates:
\begin{thm}  \label{estimates}   
Fix $A \geq 0$. Let $\{ v_k \} \subset X_1$ be a family of minimizers
$E_{k_j,\al_k,\hat\be_k}(u_k) = e^{(1)}_{k,\al_k,\hat\be_k}$, 
indexed by $(0,1) \ni k \to 0$, with
$-A \leq \al_k \leq 1$, $0 < \hat \be_k \to 0$, 
such that $\| v_k - Q \|_X \to 0$ as $k \to 0$.
Then for all $k$ sufficiently small, there is $\mu_k = 1 + o(1)$, with
\begin{equation}  \label{relation}
  k = \be_k \left( 2 \log \left( \frac{1}{\be_k} \right) + O(1) \right),
  \qquad \be_k := \mu_k \hat \be_k \to 0,
\end{equation}
such that if we write
\[
  v_k(\mu_k r) = Q(r) + \xi_k(r),
\]
then
\begin{equation}  \label{xiest2}
  \| \xi_k \|_X \lec \be_k,
\end{equation}
and
\begin{equation}  \label{deriv}
  \| \xi_k \|_{X^\infty} \lec \be_k^2 \log^2 \left( \frac{1}{\be_k} \right).
\end{equation}
\end{thm}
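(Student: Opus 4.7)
The plan is a Lyapunov--Schmidt-type reduction around the harmonic bubble $Q$, with the scaling parameter $\mu_k$ playing the role of an effective finite-dimensional direction that absorbs the threshold resonance $h = \sin Q = 2r/(1+r^2)$ of the linearization. By the scaling law~\eqref{law2}, $w(r) := v_k(\mu_k r)$ minimizes $E_{k,\al_k,\be_k}$ with $\be_k := \mu_k \hat\be_k$, and hence satisfies the Euler--Lagrange equation~\eqref{E-L} at parameter $\be_k$. Writing $w = Q + \xi$ and using $-\Delta_r Q + \tfrac{1}{2r^2}\sin 2Q = 0$, I will derive
\[
H\xi \;=\; f_{k,\be,\al} \;+\; N(\xi;k,\be,\al), \qquad
f_{k,\be,\al} := \frac{k\be}{r}\sin^2 Q \;-\; \be^2\bigl[1 - \al(1-\cos Q)\bigr]\sin Q,
\]
where $H = -\Delta_r + \tfrac{1}{r^2} - \tfrac{8}{(1+r^2)^2}$ is the linearization and $N$ collects all nonlinear-in-$\xi$ pieces together with the linear-in-$\xi$ corrections of order $k\be$ and $\be^2$.

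Since $Hh = 0$ with $h \notin L^2$, the resolvent $(H+\be^2)^{-1}$ is singular at $\be = 0$. The resolvent expansion to be established in the appendix will take the schematic form
\[
(H+\be^2)^{-1} g \;=\; \frac{\langle h, g\rangle_{\mathrm{reg}}}{-2\log\be + O(1)}\, h \;+\; R_\be g,
\]
with $R_\be$ uniformly bounded in the $X$-type norms in play. I will use the free parameter $\mu_k$ to impose the solvability condition cancelling the singular $h$-component of the solution at leading order. Pairing $f_{k,\be,\al}$ against $h$ --- noting that $\langle \tfrac{1}{r}\sin^2 Q, h\rangle = \int_0^\infty h^3\, dr = 2$ is finite, while the divergent integral $\langle h,\sin Q\rangle$ is regularized by the resolvent into $2\log(1/\be) + O(1)$ --- yields the scalar equation
\[
2 k \be \;-\; \be^2\bigl(2\log(1/\be) + O(1)\bigr) \;+\; O\bigl(\be^3\log^2(1/\be)\bigr) \;=\; 0,
\]
whose solution is exactly~\eqref{relation}; an implicit function argument then fixes $\mu_k = 1 + o(1)$ uniquely for each small $k$. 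With $\mu_k$ so chosen, a contraction in $X$, based on uniform bounds on the regular part of the resolvent, the embedding~\eqref{embedding}, and the size estimate on $f_{k,\be,\al}$ after $k \sim \be\log(1/\be)$ is substituted, gives $\|\xi_k\|_X \lec \be_k$, which is~\eqref{xiest2}.

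For the sharper pointwise bound~\eqref{deriv} I exploit the factorization $H = F^*F$ with $F = \partial_r + \frac{r^2-1}{r(r^2+1)}$. Applying $F$ to the $\xi$-equation produces
\[
\hat H(F\xi) \;=\; F\cdot[\text{RHS}], \qquad
\hat H := FF^* = -\Delta_r + \frac{4}{r^2(r^2+1)},
\]
and $\hat H$ has no threshold resonance or eigenvalue, so $(\hat H + \be^2)^{-1}$ is uniformly bounded on appropriate $L^\infty$-type weighted spaces as $\be \to 0$. Substituting the $X$-bound $\|\xi\|_X \lec \be$ into the right-hand side gives a source of size $\be^2\log^2(1/\be)$; the clean resolvent estimates for $\hat H + \be^2$ then yield $\|F\xi\|_{L^\infty} \lec \be^2\log^2(1/\be)$, and reconstructing $\xi_r$ and $\xi/r$ from $F\xi$ (using the already-obtained $X$-control of $\xi$ to handle the integration constants near $r=0$ and $r=\infty$) delivers~\eqref{deriv}. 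The hardest step will be the precise near-threshold analysis of $(H+\be^2)^{-1}$ in norms adapted to the slow spatial decay of $Q$ and $h$: one must track not only the leading $2\log(1/\be)$ but also the correct $O(1)$ correction, since these together pin down the constant $2$ in~\eqref{relation}. The factorization step, by contrast, cleanly bypasses the resonance for the pointwise bound, so the most delicate work is really concentrated in the appendix resolvent estimates.
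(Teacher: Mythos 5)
Your overall strategy coincides with the paper's: reparameterize by a scale $\mu_k$ to enforce a solvability (orthogonality) condition that cancels the resonance $h$, use resolvent estimates for $(H+\be^2)^{-1}$ built on the free resolvent $R_0(\be)$ and the factorization $(H+\be^2)^{-1}=(I-R_0(\be)W)^{-1}R_0(\be)$, and then exploit the conjugate factorization $H = F^*F$, $\hat H = FF^*$ (resonance-free) to upgrade the energy bound to the $X^\infty$ bound. So the skeleton is right, and the scalar relation from the solvability condition is the right mechanism for~\eqref{relation} (though your displayed regularized constant, $2\log(1/\be)$ in the denominator and hence in $\langle h,h\rangle_{\mathrm{reg}}$, should be $4\log(1/\be)$: the coefficient in~\eqref{innercomp} is $\langle Wh,R_0(\be)h\rangle = 4\log(1/\be)+O(1)$, coming from $\int rWh\,rdr = 2\int h^3/r\,rdr = 4$; with your constant the relation would come out as $k = \be\log(1/\be) + O(\be)$, off by a factor of two).

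There is, however, a genuine gap in your recovery step from $F\xi$ back to $\xi$. Solving $F\xi = \eta$ gives $\xi = c\,h + \xi^\sharp$ with $\xi^\sharp(r)=h(r)\int_1^r \eta/h$, and $\|\xi^\sharp\|_{X^\infty}\lec\|\eta\|_{L^\infty\cap r^2L^1_{\leq1}}$ is straightforward. But you propose to control the constant $c$ from "the already-obtained $X$-control of $\xi$." That only yields $|c|\lec\|\xi\|_X + \|\xi^\sharp\|_X\lec\be$, and since $\|h\|_{X^\infty}\sim 1$ this would give $\|\xi\|_{X^\infty}\lec\be$ — far weaker than~\eqref{deriv}. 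The sharper bound on $c$ requires re-invoking the orthogonality condition~\eqref{orth2}: pairing $\xi = ch+\xi^\sharp$ with $WR_0(\be)h$ and using~\eqref{innercomp} and~\eqref{innerbound2} gives $|c|\lec\log(1/\be)\|\xi^\sharp\|_{X^\infty}$, which is where one power of $\log$ enters (the other power already appears in $\|\eta\|_{L^\infty\cap r^2L^1_{\leq1}}\lec\be^2\log(1/\be)$, not the $\be^2\log^2$ you claim for $F\xi$ itself). Without this use of the orthogonality constraint the final exponent of $\log$ cannot be obtained.

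Two smaller points you elide: (i) $(\hat H+\be^2)^{-1}$ is not simply "uniformly bounded" on one weighted $L^\infty$ space — the paper needs a $\be$-uniform bound only for $L^{1,\log}$ sources (Proposition~\ref{L1}), plus a second bound with loss $\be^{-1}\log^{3/2}(1/\be)$ for $L^2$ sources (Proposition~\ref{L2prop}), because $FN$ splits into pieces with incompatible decay; and (ii) to place the source terms in those spaces one first needs the auxiliary pointwise bound $\|\xi/r\|_{L^\infty}\lec\be$, which in the paper is established by a separate iteration argument and is not a consequence of $\|\xi\|_X\lec\be$.
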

\begin{rem}
The existence of such minimizers is ensured by Corollary~\ref{existcor}
and Theorem~\ref{convergetoQ}.
\end{rem}
\begin{rem}
Theorem~\ref{estimates} proves~\eqref{relationThm} and~\eqref{estimatesThm}
of Theorem~\ref{mainthm}.
\end{rem}

\begin{proof}
The proof occupies the next several subsections.
It is based on a careful analysis of the Euler-Lagrange equation~\eqref{E-L}
with $\be \ll 1$ and $k \ll 1$.
Having fixed a family $v_k$, $\al_k$, $\hat \be_k$ as in the statement 
of the theorem, we will drop the subscript $k$ from the notation for simplicity.

\subsection{Equation for the difference}  \label{eqn}

We write
\begin{equation}  \label{o(1)}
  v(r) = Q(r) + \hat \xi(r), \qquad \| \hat \xi \|_X = o(1),
\end{equation}
and reorganize the Euler-Lagrange equation~\eqref{E-L} as
an equation for~$\hat \xi$:
\begin{equation}  \label{pde}
  (H + \hat\be^2) \; \hat \xi  = s + N,
\end{equation}
where we denote
\[
  h := \sin(Q) = \frac{2r}{r^2+1}, \quad
  \hat{h} := \cos(Q) = \frac{r^2-1}{r^2+1}
\]
\[  
  H = -\Delta_r + \frac{1}{r^2} - W(r), \qquad W = 2 \frac{h^2}{r^2},
\]
\begin{equation}  \label{source}
  s := s_{k,\al,\hat\be}(r) 
  := k \hat\be \frac{1}{r} h^2 + \hat\be^2 (\al \frac{h}{r} - 1) h
  = -\hat\be^2 h + (k + \hat\be \al) \hat\be \frac{h^2}{r}
\end{equation}
and
\begin{equation}  \label{Nform}
\begin{split}
  N = N_{k,\al,\hat\be}(\hat \xi) &= \frac{1}{2r^2} \left(  2 \hat{h} h (1-\cos(2 \hat\xi)) 
  + (2 h^2-1)(\sin(2 \hat\xi) - 2 \hat\xi) \right) \\
  & \quad + k \hat\be \frac{1}{r}(\sin^2(Q+\hat\xi) - \sin^2(Q)) \\ 
  & \quad + \hat\be^2 \left(  (1-\al + \al \cos(Q)) \sin(Q) + \hat\xi  
  - (1-\al + \al \cos(Q + \hat\xi))\sin(Q+\hat\xi) \right)
\end{split}
\end{equation}
so that
\begin{equation}  \label{Nbound}
  |N| \lec \frac{1}{r^2} ( h \hat\xi^2 + |\hat\xi|^3 )
  + k \hat\be \frac{1}{r}( h |\hat\xi| + \hat\xi^2) + 
  \hat\be^2 \left( (1 + |\al|) (\frac{h}{r} |\hat\xi| + |\hat\xi|^3 + h \hat\xi^2) + |\al| h \hat\xi^4  \right).
\end{equation}

\subsection{Resolvent estimates}

Re-writing equation~\eqref{pde} as
\begin{equation} \label{pde2} 
  \hat \xi = (H + \hat\be^2)^{-1} \left( s + N \right),
  \qquad s = s_{k,\al,\hat\be} = -\hat\be^2 h + (k + \hat\be \al) \hat\be \frac{h^2}{r},
  \quad N = N_{k,\al,\hat\be}(\hat \xi)
\end{equation}
it is clear we need to understand the 
mapping properties of the resolvent $(H + \be^2)^{-1}$ as $\be \to 0$,
which are dominated by the presence of a threshold resonance for the 
$\be=0$ limit $H$:
\[
  H h = 0, \qquad h \in L^p, \; p > 2, \quad h \not\in L^2.
\] 
This means we need an orthogonality condition to avoid singular behaviour 
of $(H + \be^2)^{-1}$ as $\be \to 0$. With such a condition, we find
$(H + \be^2)^{-1}$  satisfies the same estimates as the free resolvent
\[
  R_0(\be) := \left( -\Delta_r + \frac{1}{r^2} + \be^2 \right)^{-1}.
\] 
Precisely,
\begin{equation}  \label{basic}
  g \perp R_0(\be) Wh \;\; \implies \;\;
  \| (H + \be^2)^{-1} g \|_X \lec \| R_0(\be) g \|_{X}.
\end{equation}
This estimate is a consequence of the factorization
\begin{equation}  \label{factor1}
  (H + \be^2)^{-1} = ( I - R_0(\be) W )^{-1} R_0(\be)
\end{equation}
and the estimate
\begin{equation}  \label{basic2}
  f \perp Wh \;\; \implies \;\;
  \| (I - R_0(\be) W)^{-1} f \|_X \lec \| f \|_{X},
\end{equation}
which is proved in Section~\ref{fullresolvent}.
The bound~\eqref{basic} will be used together with 
estimates on the free resolvent which are proved in Section~\ref{freeresolvent}:
\begin{equation}  \label{p=1}
  \| R_0(\be) g \|_X \lec \| r g \|_{L^2},
\end{equation}
\begin{equation}  \label{p=0}
  \| R_0(\be) g \|_X \lec \frac{1}{\be} \| g \|_{L^2},
\end{equation}
\begin{equation}  \label{p=-1}
  \| R_0(\be) g \|_X \lec \frac{1}{\be^2} \| g \|_X.
\end{equation}
In addition, we use a refinement of~\eqref{p=0} for the case $g = h \not\in L^2$:
 \begin{equation}  \label{refined}
  \| R_0(\be) h \|_X \lec \frac{1}{\be}.
\end{equation}
This is proved in Section~\ref{refinement}.

\subsection{Reparameterization}

To apply estimate~\eqref{basic} to equation~\eqref{pde2}
would require the orthogonality condition
\[
  0 = \langle s_{k,\al,\hat\be} + N_{k,\al,\hat\be}(\hat\xi), R_0(\hat\be) W h \rangle,
\]
which does not hold a priori. We first re-express this condition:
\begin{lem}  \label{orthequiv}
If $\hat\xi$ satisfies~\eqref{pde2}, then
\begin{equation}  \label{orth2}
\langle s + N, R_0(\hat\be) W h \rangle = 0 \;\; \iff \;\;
\langle W \hat\xi, R_0(\hat\be) h \rangle = 0. 
\end{equation}
\end{lem}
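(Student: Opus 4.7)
The equivalence should follow from a short algebraic manipulation using the two fundamental relations at hand: (i) the factorized rewriting $H + \hat\be^2 = R_0(\hat\be)^{-1} - W$, and (ii) the resonance identity $H h = 0$, which in resolvent form says
\[
  W h = (R_0(\hat\be)^{-1} - \hat\be^2) h, \qquad \mbox{equivalently} \qquad R_0(\hat\be) W h = h - \hat\be^2 R_0(\hat\be) h.
\]
The plan is to substitute this second identity into the left-hand side of~\eqref{orth2} to split $\langle s+N, R_0(\hat\be) W h \rangle$ into the two pieces $\langle s+N, h\rangle$ and $-\hat\be^2 \langle s+N, R_0(\hat\be) h\rangle$, and then use the PDE~\eqref{pde} (together with self-adjointness of $H$ and $R_0(\hat\be)$) to rewrite each piece in terms of $\hat\xi$.

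Concretely, from $(H+\hat\be^2)\hat\xi = s+N$ and $Hh=0$ I get the first piece,
\[
  \langle s+N, h\rangle = \langle (H+\hat\be^2)\hat\xi, h\rangle = \langle \hat\xi, (H+\hat\be^2) h\rangle = \hat\be^2 \langle \hat\xi, h\rangle.
\]
For the second piece, I insert $H+\hat\be^2 = R_0(\hat\be)^{-1} - W$ to obtain
\[
  \langle s+N, R_0(\hat\be) h\rangle = \langle \hat\xi, R_0(\hat\be)^{-1} R_0(\hat\be) h\rangle - \langle W\hat\xi, R_0(\hat\be) h\rangle = \langle \hat\xi, h\rangle - \langle W\hat\xi, R_0(\hat\be) h\rangle.
\]
Combining the two pieces, the $\langle \hat\xi, h\rangle$ terms cancel and I land on the clean identity
\[
  \langle s+N, R_0(\hat\be) W h\rangle = \hat\be^2 \langle W\hat\xi, R_0(\hat\be) h\rangle,
\]
from which the equivalence~\eqref{orth2} is immediate since $\hat\be > 0$.

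The only delicate point is to justify the two uses of self-adjointness (namely $\langle H\hat\xi, h\rangle = \langle \hat\xi, H h\rangle$ and $\langle \hat\xi, R_0(\hat\be)^{-1} R_0(\hat\be) h\rangle = \langle \hat\xi, h\rangle$), because $h = 2r/(1+r^2) \sim 2/r$ at infinity is not in $L^2$, so each inner product must be interpreted with care. I expect this to be handled by the decay of $\hat\xi$ (from Proposition~\ref{differentiability}, $v$ and hence $\hat\xi = v-Q$ decays exponentially, while $Q$ contributes only the harmless $h$-direction), combined with the decay properties of $R_0(\hat\be) h$ recorded in~\eqref{refined}. If needed, a standard cutoff-and-pass-to-the-limit argument, controlling the boundary terms in the integration by parts using the decay of $\hat\xi$ and the boundedness of $r h(r)$, makes every formal step rigorous. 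This verification of integrability is the main (though essentially routine) obstacle; once it is in place, the algebraic identity above proves the lemma.
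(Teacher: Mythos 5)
Your final identity $\langle s+N, R_0(\hat\be)Wh\rangle = \hat\be^2 \langle W\hat\xi, R_0(\hat\be)h\rangle$ is exactly what the paper establishes, but your route to it passes through ill-defined intermediate quantities, and your attempted justification of this rests on a factual error about the decay of $\hat\xi$.

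The problem: you split $R_0(\hat\be)Wh = h - \hat\be^2 R_0(\hat\be)h$ and correspondingly write $\langle s+N, R_0(\hat\be)Wh\rangle = \langle s+N, h\rangle - \hat\be^2\langle s+N, R_0(\hat\be)h\rangle$. But each piece here diverges. Since $s \sim -\hat\be^2 h \sim -2\hat\be^2/r$ and $h \sim 2/r$, the integrand of $\langle s, h\rangle$ behaves like $-4\hat\be^2/r$ for large $r$, so $\langle s+N, h\rangle = -\infty$; similarly $R_0(\hat\be)h \sim h/\hat\be^2$ at infinity (the $\hat\be^2$-term dominates the equation for $R_0(\hat\be)h$), so $\langle s+N, R_0(\hat\be)h\rangle$ diverges in the same way. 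You then pass to $\hat\be^2\langle\hat\xi, h\rangle$, which also diverges: $\hat\xi = v - Q$, where $v$ decays exponentially (Prop.~\ref{differentiability}) but $Q$ decays only like $2/r$, so $\hat\xi \sim -Q \sim -2/r$ and $\int \hat\xi h\, r\, dr$ is logarithmically divergent. Your remark that ``$v$ and hence $\hat\xi = v-Q$ decays exponentially'' is therefore incorrect, and the proposed cutoff argument cannot be ``routine'': the two infinities you have manufactured must be shown to cancel exactly, which is precisely the nontrivial content you would still need to supply.

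The paper sidesteps this entirely by never separating $R_0(\hat\be)Wh$ into $h$ and $\hat\be^2 R_0(\hat\be) h$ before taking the inner product. It computes $(H+\hat\be^2)R_0(\hat\be)Wh = Wh - WR_0(\hat\be)Wh$, then uses $R_0(0)Wh = h$ together with the resolvent identity $R_0(0)-R_0(\hat\be) = \hat\be^2 R_0(\hat\be)R_0(0)$ to collapse the difference to $\hat\be^2\, W R_0(\hat\be)h$ \emph{as a function}, before pairing with $\hat\xi$. Since $R_0(\hat\be)h$ is bounded and $W$ decays like $r^{-4}$, the resulting pairing $\langle\hat\xi, \hat\be^2 WR_0(\hat\be)h\rangle$ is manifestly absolutely convergent, and the self-adjointness step $\langle(H+\hat\be^2)\hat\xi, \cdot\rangle = \langle\hat\xi, (H+\hat\be^2)\,\cdot\,\rangle$ is justified by the rapid decay of $R_0(\hat\be)Wh$. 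Your derivation would be repaired by reordering the operations in this way, but as written the proof has a genuine gap.
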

\begin{proof}
This is a computation using the equation~\eqref{pde2} for $\hat\xi$, the equation for the resonance eigenfunction
\[
  0 = H h = (-\Delta + \frac{1}{r^2} - W) h \;\; \implies \;\; R_0(0) (W h) = h, 
\] 
and the resolvent identity 
\begin{equation}  \label{resid}
  R_0(\gamma) - R_0(\be) = (\be^2 - \gamma^2) R_0(\be) R_0(\gamma), 
  \qquad \be, \gamma \geq 0
\end{equation}
with $\gamma = 0$:
\[
\begin{split}
  \langle s + N, R_0(\hat\be) W h \rangle &=
  \langle (H + \hat\be^2)\hat\xi, R_0(\hat\be) W h \rangle =
  \langle \hat\xi, (-\Delta + \frac{1}{r^2} + \hat\be^2  - W) R_0(\hat\be) W h \rangle \\ &=
  \langle \hat\xi, W h - W R_0(0) W h + W(R_0(0) - R_0(\hat\be)) Wh \rangle \\ &=
  \hat\be^2 \langle \hat\xi, W R_0(\hat\be) R_0(0) Wh \rangle =
  \hat\be^2 \langle W \hat\xi, R_0(\hat\be) h \rangle.
\end{split}
\]
\end{proof}
To impose this condition we will reparameterize:
\[
  v(r) = Q(r) + \hat\xi(r) = Q(r/\mu) + \xi(r/\mu),
  \qquad \mu = 1 + o(1).
\]
It follows that the new perturbation $\xi$
solves the Euler-Lagrange equation~\eqref{pde2} with rescaled parameter:
\begin{equation}  \label{pde3}
  (H + \be^2) \xi = s_{k,\al,\be}
  + N_{k,\al,\be}(\xi), \qquad
  \be := \mu \hat \be \to 0.
\end{equation}
The idea is to choose $\mu$ so as to enforce
the orthogonality condition corresponding to~\eqref{orth2}:
\begin{equation} \label{neworth2}
  \langle W \xi, R_0(\be) h \rangle = 0.
\end{equation}
\begin{prop} \label{reparam}
For all $k$ sufficiently small, 
there exists $\mu = 1 + o(1)$ so that~\eqref{neworth2} holds.
\end{prop}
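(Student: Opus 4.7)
The plan is a one-dimensional implicit-function / intermediate value argument applied to the scalar map
\[
  \Phi(\mu) := \langle W\xi_\mu, R_0(\mu\hat\be)h\rangle,
  \qquad
  \xi_\mu(\rho) := v(\mu\rho) - Q(\rho) = \bigl(Q(\mu\rho) - Q(\rho)\bigr) + \hat\xi(\mu\rho),
\]
defined for $\mu$ near $1$. Observe $\xi_1 = \hat\xi$, and $\Phi(\mu) = 0$ is precisely the orthogonality condition~\eqref{neworth2} with $\be = \mu\hat\be$. By smoothness of $Q$ and of $R_0(\cdot)h$ in $\be > 0$, together with the regularity of $v$ from Proposition~\ref{differentiability}, $\Phi$ is $C^1$ in $\mu$ on a neighbourhood of $1$, so it suffices to produce a zero of the form $\mu = 1 + o(1)$.

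First I would bound $\Phi(1) = \langle W\hat\xi, R_0(\hat\be)h\rangle = \langle \hat\xi, WR_0(\hat\be)h\rangle$. Using $\|\hat\xi\|_{L^\infty} \leq \|\hat\xi\|_X = o(1)$ from~\eqref{embedding}, the rapid decay $W(r) = 8/(1+r^2)^2$, and the resolvent bound~\eqref{refined}, one obtains $|\Phi(1)| = o(\log(1/\hat\be))$. Next, differentiating in $\mu$, with $\partial_\mu\xi_\mu(\rho)|_{\mu=1} = \rho Q'(\rho) + \rho\hat\xi'(\rho)$ and $\rho Q'(\rho) = -h(\rho)$, and using $\partial_\be R_0(\be) = -2\be R_0(\be)^2$, one gets
\[
  \Phi'(1) = -\langle Wh, R_0(\hat\be)h\rangle + \langle W\rho\hat\xi'(\rho), R_0(\hat\be)h\rangle - 2\hat\be^2\langle W\hat\xi, R_0(\hat\be)^2 h\rangle .
\]
The last two terms carry an overall factor $\|\hat\xi\|_X = o(1)$ against bounded resolvent-type quantities (estimated via~\eqref{basic}, \eqref{refined}, and the decay of $W$), so they are subleading.

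The key step, and the main obstacle, is the asymptotic
\[
  \langle Wh, R_0(\hat\be)h\rangle = 2\log(1/\hat\be) + O(1) \qquad (\hat\be \to 0),
\]
which reflects the threshold resonance $Hh = 0$ with $h \notin L^2$ and is exactly what produces the $2\log(1/\be)$ factor in~\eqref{relationThm}. This is a concrete 2D angular-momentum-one computation, which I would carry out using the explicit Green's kernel $I_1(\hat\be r_<) K_1(\hat\be r_>)$ for $-\Delta_r + 1/r^2 + \hat\be^2$ together with $Wh = 16r/(1+r^2)^3$ and the small-argument asymptotics $I_1(x) \sim x/2$, $K_1(x) \sim 1/x$; the logarithm arises from the middle region $1 \lec r \lec 1/\hat\be$, and is presumably the content of Section~\ref{refinement}. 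Granted this, $\Phi'(1)$ has size $\sim \log(1/\hat\be)$ with a definite sign, while $|\Phi(1)| = o(\log(1/\hat\be))$. The IVT (or 1D IFT) then produces a unique zero
\[
  \mu \;=\; 1 \;+\; \frac{\Phi(1)}{\langle Wh, R_0(\hat\be)h\rangle}\,(1+o(1)) \;=\; 1 + o(1),
\]
completing the proof.
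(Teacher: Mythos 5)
Your overall plan---show that the orthogonality functional changes sign as $\mu$ varies near $1$---matches the paper's, but your implementation via $\Phi'(1)$ introduces a genuine gap. When you differentiate $\xi_\mu(\rho)=Q(\mu\rho)-Q(\rho)+\hat\xi(\mu\rho)$ in $\mu$, you produce the term $\langle W\rho\hat\xi'(\rho),R_0(\hat\be)h\rangle$, and when you differentiate $R_0(\mu\hat\be)$ you produce $\hat\be^2\langle W\hat\xi,R_0(\hat\be)^2h\rangle$. Neither of these is "subleading relative to $\log(1/\hat\be)$" with the tools available at this stage of the argument. The natural estimates give $\|R_0(\hat\be)h\|_{L^\infty}\lesssim 1/\hat\be$ and $\|R_0(\hat\be)^2h\|_{L^\infty}\lesssim 1/\hat\be^3$ (the threshold resonance is precisely what makes these blow up as $\hat\be\to0$); combined with $\|\hat\xi\|_X=o(1)$ they yield $o(1)/\hat\be$ for both terms, which dominates the leading $\log(1/\hat\be)$ term rather than being absorbed by it. At this point you only know $\|\hat\xi\|_X=o(1)$, not $\|\hat\xi\|_X\lesssim\hat\be$ (that is the \emph{conclusion}~\eqref{xiest2}, proved only after the reparameterization), so you cannot close the argument this way.

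The paper sidesteps this entirely by never differentiating. It Taylor-expands the explicit profile $Q(\mu r)-Q(r)=(1-\mu)h(r)+(\mu-1)^2\zeta_\mu(r)$, with $\zeta_\mu$ an explicit bounded function, and writes~\eqref{neworth2} as
\[
0=(1-\mu)\langle Wh,R_0(\be)h\rangle+(\mu-1)^2\langle W\zeta_\mu,R_0(\be)h\rangle+\langle W\hat\xi(\mu\cdot),R_0(\be)h\rangle.
\]
The key is then Lemma~\ref{key}, especially~\eqref{innerbound}: inner products $\langle g,R_0(\be)h\rangle$ are $O(\log(1/\be))$ for localized $g$. This is strictly better than the crude bound $\|g\|_{L^1}\|R_0(\be)h\|_{L^\infty}\lesssim\|g\|_{L^1}/\be$ you implicitly rely on. Applied to $g=W\hat\xi(\mu\cdot)$, it controls that term by $\log(1/\hat\be)\cdot\|\hat\xi\|_{L^\infty}=o(\log(1/\hat\be))$ with no derivative of $\hat\xi$ ever appearing. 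Dividing by $\log(1/\hat\be)$ yields $(4+o(1))(\mu-1)+O((\mu-1)^2)+o(1)=0$, and IVT finishes. One further discrepancy: the paper's computation (via~\eqref{choice} and $\int_0^\infty r\,Wh\,r\,dr=4$) gives $\langle Wh,R_0(\be)h\rangle=4\log(1/\be)+O(1)$, not $2\log(1/\be)+O(1)$ as you state; the factor $2$ in~\eqref{relationThm} comes from combining $-4\be^2\log(1/\be)$ with $2k\be$ in~\eqref{source3}, not directly from $\langle Wh,R_0h\rangle$. That error alone would not break your argument, but the missing control of the derivative terms does.
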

\begin{proof}
We have
\[
  \xi(r) = v( \mu r) - Q(r) = Q(\mu r) - Q(r) + \hat\xi(\mu r).
\]
Now since
\[
  \frac{\p}{\p \mu} Q(\mu r) = r Q'(\mu r) = - \frac{1}{\mu} h(\mu r), \qquad
  \frac{\p^2}{\p \mu^2} Q(\mu r) = \frac{1}{\mu} r h^2(\mu r),
\]
by Taylor's theorem we have
\[
  Q(\mu r) - Q(r) = (1-\mu) h(r) + (\mu-1)^2 \zeta_\mu(r), \qquad 
  \zeta_\mu(r) = \frac{1}{2\mu^*} r h^2(\mu^* r), \quad \mu^* = \mu^*(r) \in (1, \mu),
\]
and so we express condition~\eqref{neworth2} as
\begin{equation} \label{neworth3}
  0 = (1 - \mu) \langle W h, R_0(\be) h \rangle +
  (\mu-1)^2 \langle W \zeta_\mu, R_0(\be) h \rangle +
  \langle W \hat\xi(\mu \cdot), R_0(\be) h \rangle.
\end{equation}
To estimate these terms, we use a key estimate
of inner-products involving the free resolvent acting on the slowly
decaying function $h$, proved in Section~\ref{freeinners}:
\begin{lem}  \label{key}
Let $g \in L^1$ be a (radial) function: 
\begin{enumerate}
\item 
if $r^q g \in L^\infty$ for some $q > 3$, then as $\be \to 0+$,
\begin{equation} \label{choice}
  \left|  \langle g , R_0(\be) h \rangle -   \left( \int_0^\infty r g(r) \; r dr \right) 
  \log \left( \frac{1}{\be} \right) \right|
  \lec \| g \|_{L^1 \cap r^{-q} L^\infty},
\end{equation}
and in particular
\begin{equation}  \label{innerbound}
  | \langle g , R_0(\be) h \rangle | \lec 
  \log \left( \frac{1}{\be} \right)\| g \|_{L^1 \cap r^{-q} L^\infty}  , 
\end{equation}
and
\begin{equation}  \label{innercomp}
  \langle W h , R_0(\be) h \rangle = 4 \log \left( \frac{1}{\be} \right)
  + O(1);
\end{equation}
\item
if $r ^3 g \in L^\infty$, then
\begin{equation}  \label{innerbound2}
  | \langle g , R_0(\be) h \rangle | \lec 
  \log^2 \left( \frac{1}{\be} \right) \| g \|_{L^1 \cap r^{-3} L^\infty}  . 
\end{equation}
\end{enumerate}
\end{lem}
We apply~\eqref{innercomp},
\eqref{innerbound} for $q=4$ with $g = W \zeta_\mu$, 
and with $g = W \xi(\mu \cdot)$ to~\eqref{neworth3}, noting that
\[
  \| W \zeta_\mu \|_{L^1 \cap r^{-4} L^\infty} \leq \| W \|_{L^1 \cap r^{-4} L^\infty}
  \| \zeta_\mu \|_{L^\infty}  \lec \frac{1}{(\mu^*)^2} \lec 1
\]  
for $\mu$ near $1$, and
\[
  \| W \hat\xi(\mu \cdot) \|_{L^1 \cap r^{-4} L^\infty}  \lec 
  \| W \|_{L^1 \cap r^{-4} L^\infty} \| \hat\xi (\mu \cdot) \|_{L^\infty} 
  \lec \| \hat\xi \|_{L^\infty} \lec \| \hat\xi \|_X = o(1)
\]
by~\eqref{o(1)}.  
After dividing through by $\log \left( \frac{1}{\hat \be} \right)$,
this yields
\[
  (4 + o(1))(\mu - 1) + O((\mu-1)^2) + o(1) = 0 .
\]
By the intermediate value theorem, there is a solution
$\mu = 1 + o(1)$ of this equation for all $k$ small enough.
\end{proof}

\subsection{Remainder estimate}
\label{remainder}

By Proposition~\ref{reparam} and Lemma~\ref{orthequiv}
(for $k$ small enough), the rescaled remainder
\[
  \xi(r) = v(\mu r) - Q(r), \quad \| \xi \|_X \to 0 
\]
satisfies equation~\eqref{pde3} and the orthogonality condition
\begin{equation}  \label{orth1}
  \langle s_{k,\al,\be} + N_{k,\al,\be}(\xi), R_0(\be) W h \rangle = 0.
\end{equation}
So using~\eqref{basic}, we have
\begin{equation}  \label{xiest}
  \| \xi \|_X \lec \| R_0(\be) \left( s_{k,\al,\be} + N_{k,\al,\be}(\xi) \right) \|_X.
\end{equation}
We will use this bound to show estimate~\eqref{xiest2} for  $\| \xi \|_X$.

We begin with the contributions from 
\begin{equation}  \label{sourcenew}
  s_{k,\al,\be} = -\be^2 h + (k + \be \al) \be \frac{h^2}{r}.
\end{equation}
We use~\eqref{refined} for the main term $-\be^2 h$,  
and~\eqref{p=1} for the second term, since $r \frac{h^2}{r} = h^2 \in L^2$, to get
\begin{equation}  \label{sest}
  \| R_0(\be) s_{k,\al,\be} \|_X \lec  \be + (k + \be |\al|) \be \lec \be.
\end{equation}

Next we will bound the contributions coming from the nonlinear terms
$N_{k,\al,\be}(\xi)$.
From the pointwise estimate~\eqref{Nbound}, and recalling that 
$\| \xi \|_{L^\infty} \lec \| \xi \|_X \ll 1$, we have
\begin{equation}  \label{Nsplit}
  N_{k,\al,\be}(\xi) = N_1 + N_2 + N_3,
\end{equation}
where
\begin{equation}  \label{Nbounds}
\begin{split}
  & N_1 \lec h \frac{\xi^2}{r^2} + \frac{|\xi|^3}{r^2} 
  + \left( k + \be(1 + |\al|) \right) \be h \frac{|\xi|}{r}  \\
  & N_2 \lec  k \be \frac{\xi^2}{r} + \be^2(1 + |\al|) h \xi^2 \\
  & N_3 \lec \be^2(1 + |\al|) |\xi|^3.
\end{split}
\end{equation}
Using~\eqref{p=1}:
\begin{equation} \label{N1}
\begin{split}
  \| R_0(\be) N_1 \|_X  &\lec \| r \; N_1 \|_{L^2} 
  \lec \| r h \|_{L^\infty} \left\| \frac{\xi}{r} \right\|_{L^2}^2 + 
  \| \xi \|_{L^\infty}^2 \left\| \frac{\xi}{r} \right\|_{L^2} 
  +  \left( k + \be(1 + |\al|) \right) \be \| r h \|_{L^\infty} \left\| \frac{\xi}{r} \right\|_{L^2} \\
  &\lec \left( \| \xi \|_X +  (k + \be(1 + |\al|))\be \right) \| \xi \|_X 
  \lec \left( \| \xi \|_X + \be \right) \| \xi \|_X = o(1) \| \xi \|_X.
\end{split}
\end{equation}
Using~\eqref{p=0}:
\begin{equation}  \label{N2}
\begin{split}
  \| R_0(\be) N_2 \|_X  &\lec \frac{1}{\be} \| N_2 \|_{L^2} \lec \frac{1}{\be} \left(
  k \be \| \xi \|_{L^\infty} \| \frac{\xi}{r} \|_{L^2} + 
  \be^2(1 + |\al|) \| r h \|_{L^2} \| \xi \|_{L^\infty} \| \frac{\xi}{r} \|_{L^2}  \right) \\
  &\lec \left( k + \be(1 + |\al|) \right) \| \xi \|_X^2 
  \ll \| \xi \|_X^2 = o(1) \| \xi \|_X.
\end{split}
\end{equation}
Using~\eqref{p=-1}: 
\begin{equation}   \label{N3}
  \| R_0(\be) N_3 \|_X  \lec \frac{1}{\be^2} \| N_3 \|_{X} 
  \lec (1 + |\al|) \| \xi \|_{L^\infty}^2 \| \xi \|_X \lec \| \xi \|_X^3 = o(1) \| \xi \|_X,
\end{equation}
since $|\al| \lec 1 + A \lec 1$.

Combining~\eqref{N1}-\eqref{N3} with~\eqref{Nsplit},~\eqref{sest}, 
and~\eqref{xiest}, we get
\[
  \| \xi \|_X \lec \be + o(1) \| \xi \|_X,
\]
and~\eqref{xiest2} of Theorem~\ref{estimates} follows
for $k$ sufficiently small.

\subsection{Parameter relation}

Here we will use~\eqref{orth1} to determine the leading-order
relationship~\eqref{relation} between $k$ and $\be$. 

Begin with the contributions from the source term $s_{k,\al,\be}$.
By~\eqref{innercomp} of Lemma~\ref{key}, we have
\begin{equation}  \label{source1}
  \langle h, R_0(\be) W h \rangle =
  4 \log \left( \frac{1}{\be} \right) + O(1).
\end{equation}
Using the resolvent identity~\eqref{resid} with $\gamma = 0$, and
$R_0(0) (Wh) = h$, we have
\[
  R_0(\be) W h = h - \be^2 R_0(\be) h, 
\]
so
\begin{equation} \label{source2}
  \langle \frac{h^2}{r}, \; R_0(\be) W h \rangle =
  \langle \frac{h^2}{r}, \; h \rangle  - \be^2 
  \langle \frac{h^2}{r}, \; R_0(\be) h \rangle =
  2 + O \left( \be^2 \log^2 \left( \frac{1}{\be} \right) \right) 
\end{equation}
by using~\eqref{h3/r}, as well as~\eqref{innerbound2} of Lemma~\ref{key} 
with $g = \frac{h^2}{r} \in L^1 \cap r^{-3} L^\infty$.
Combining~\eqref{source1} and~\eqref{source2} yields
\begin{equation}  \label{source3}
  \langle s_{k,\al,\be}, \; R_0(\be) W h \rangle =
  -4 \be^2 \log \left( \frac{1}{\be} \right) 
  + 2 k \be + O(\be^2).
\end{equation}

To bound the contributions of the nonlinear terms we will use 
the decomposition~\eqref{Nsplit}, the basic estimate 
\[
  \left| \langle N_j, \; R_0(\be) W h \rangle \right| =
  \left| \langle R_0(\be) N_j, \; W h \rangle \right| 
  \lec \| R_0(\be) N_j \|_{L^\infty} \| W h \|_{L^1} \lec
  \| R_0(\be) N_j \|_{X},
\]
the estimates~\eqref{N1}-\eqref{N3}, and~\eqref{xiest2}, to conclude:
\begin{equation} \label{Ncont}
  \left| \langle N_{k,\al,\be}(\xi) \; R_0(\be) W h \rangle \right| \lec \be^2. 
\end{equation}

Inserting~\eqref{source3} and~\eqref{Ncont} into~\eqref{orth1} 
shows~\eqref{relation} of Theorem~\ref{estimates},
for $k$ sufficiently small.

\subsection{Derivative estimates}
\label{regests}

In this section, we complete the proof of Theorem~\ref{estimates}
by establishing the higher-regularity estimate~\eqref{deriv}.

First, we show:
\begin{lem} 
$\frac{\xi}{r} \in L^\infty$, with
\begin{equation}  \label{xi/r}
  \left\| \frac{\xi}{r} \right\|_{L^\infty} \lec \be.
\end{equation}
\end{lem}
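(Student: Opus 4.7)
The bound for $r \geq 1$ is immediate from the already-proved estimate $\|\xi\|_X \lesssim \be$ together with the embedding~\eqref{embedding}: $|\xi(r)/r| \leq |\xi(r)| \leq \|\xi\|_X \lesssim \be$. The content of the lemma is therefore the uniform control on $r \in (0,1)$, where $\xi(r)/r$ is finite by the smoothness of $\xi$ together with $\xi(0) = 0$ (from Proposition~\ref{differentiability}), but whose size in $\be$ must still be quantified.

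For $r \leq 1$ my plan is to work with the integral form $\xi = R_0(\be)(W\xi + s + N)$ and the explicit Green's function $G(r,\rho) = I_1(\be \min(r,\rho))\, K_1(\be \max(r,\rho))$ of the 2D radial angular-mode-one free resolvent. The crucial observation is that $I_1(x)/x \to 1/2$ as $x \to 0$, providing an extra power of $\be$ beyond the bulk resolvent bounds in $X$; concretely, $I_1(\be r)/r \lesssim \be$ and $K_1(\be r) I_1(\be \rho)/r \lesssim \rho/r^2$ for $\be r, \be\rho \leq 1$, with exponential decay once either argument exceeds $1/\be$. This reduces the estimate of $\xi/r$ to bounding $\int_0^\infty K_1(\be \rho)\, |W\xi + s + N|(\rho)\, \rho\, d\rho$ by a constant.

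Each piece of the right-hand side is then estimated separately. The source $s = -\be^2 h + (k+\be\al)\be h^2/r$ produces the main contribution: using $K_1(\be \rho) \sim 1/(\be\rho)$ on $[0, 1/\be]$ and $h(\rho) = 2\rho/(\rho^2+1)$, a direct computation gives $\int K_1(\be \rho) \be^2 h(\rho)\, \rho\, d\rho \lesssim \be \log(1/\be)$, which multiplied by the prefactor $\be$ yields a contribution $O(\be^2 \log(1/\be)) \lesssim \be$, and similarly the second source piece gives $O(k\be) = O(\be^2 \log(1/\be)) \lesssim \be$ via~\eqref{relation}. The linear term $W\xi$, using $|W\xi| \lesssim \be W$ and the rapid decay $W \lesssim 1/(\rho^2+1)^2$, contributes $O(\be)$. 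The nonlinearity $N$, via the pointwise bounds~\eqref{Nbounds} together with $\|\xi\|_{L^\infty} \lesssim \be$ and the bootstrap assumption $|\xi(\rho)| \leq M\rho$ where $M := \|\xi/r\|_{L^\infty}$, gives contributions of the form $\be M + M^2 \log(1/\be)$, absorbable into the left-hand side for $\be$ small.

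The resulting inequality $M \leq C\be + C\be M + C M^2 \log(1/\be)$ closes to $M \lesssim \be$ by a standard continuity-in-$\be$ argument, using that $M$ is a priori finite (from the smoothness of $\xi$) and tends to zero with $\be$ (for instance via Sobolev embedding combined with $\|\xi\|_X \to 0$, and the uniform bound $|\xi(r)/r| \leq \|\xi\|_{L^\infty}/r$ for $r$ bounded below). The main obstacle is carefully tracking the logarithmic factors produced by the slow decay of the resonance eigenfunction $h$; the essential payoff is that the prefactor $I_1(\be r)/r \lesssim \be$ absorbs exactly one factor of $\log(1/\be)$ and yields the sharp $O(\be)$ source contribution matching the target of the lemma.
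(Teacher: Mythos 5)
Your proposal takes a genuinely different route from the paper's. You keep the mass term $\be^2$ on the left and invert $-\Delta_r + \frac{1}{r^2} + \be^2$ with the modified Bessel kernel $I_1(\be r_<)K_1(\be r_>)$, extracting the extra power of $\be$ from $I_1(\be r)/r \lesssim \be$ near the origin. The paper instead sets $\bar\xi = \xi/\be$, moves $-\be^2\bar\xi$ (together with $W\bar\xi$ and $(s+N)/\be$) to the right-hand side, and inverts the bare operator $-\Delta_r + \frac{1}{r^2}$ using the elementary fundamental solutions $r$ and $1/r$, arriving at the explicit representation formula~\eqref{rep}. Both are legitimate, and your term-by-term estimates of the source $s$ and the linear term $W\xi$ are essentially right; the paper's variant has the mild advantage of avoiding Bessel asymptotics and of making the logarithmic bookkeeping transparent.

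The gap is in how you close. Two points. First, $M := \|\xi/r\|_{L^\infty(0,1)}$ is not obviously finite a priori: Proposition~\ref{differentiability} only gives $v \in C^\infty((0,\infty))$, so boundedness of $\xi(r)/r$ as $r \to 0^+$ is itself something to justify, and the $X$ bound only controls $\xi/r$ in $L^2$, not in $L^\infty$ near $r=0$. The paper handles this by working with the cut-off quantities $M(\delta) := \sup_{\delta < r < 1}|\bar\xi(r)|/r$, which are trivially finite for $\delta>0$, and proving a $\delta$-uniform bound. Second, your inequality $M \leq C\be + C\be M + CM^2\log(1/\be)$ is quadratic in $M$ and admits a large solution branch $M \sim 1/\log(1/\be)$; the appeal to ``continuity in $\be$'' and ``$M$ tends to zero with $\be$'' cannot select the small branch without circularity, precisely because $\|\xi\|_X \to 0$ does not control $\xi/r$ uniformly near the origin. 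The cleaner fix, which the paper in fact uses, is to \emph{linearize} the dangerous cubic term: estimate two of the three factors of $|\xi|/r$ in $L^2$ (where $\|\xi/r\|_{L^2} \lesssim \be$ is already known) rather than in $L^\infty$, leaving a single factor of $M$ with the small coefficient $\be^2$. This replaces $M^2\log(1/\be)$ by $\be^2 M$ and turns the inequality into the genuinely linear $M(\delta) \leq C(1 + \be^2 \cdot (\dots))$ of~\eqref{start}, which together with the crude starting bound $M(\delta) \lesssim 1/\delta$ is then iterated to a $\delta$-uniform conclusion as in~\eqref{avest}. Without either the linearization or the $\delta$-iteration, the argument as written does not close.
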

\begin{proof}
Define
\[
  \bar\xi(r) := \frac{1}{\be} \xi(r), \qquad
  M(\delta) := \sup_{\delta < r < 1} \frac{|\bar \xi(r)|}{r}  , \qquad 0 < \delta < 1.
\]
By~\eqref{xiest2}, $\| \bar \xi \|_{L^\infty} \lec \| \bar \xi \|_X \lec 1$,
so we have the crude bound
\begin{equation}  \label{crude}
  M(\delta) \lec \frac{1}{\delta},
\end{equation}
and it suffices to show $M(\delta) \lec 1$ uniformly for $\delta < 1$.
Re-write the Euler-Lagrange equation~\eqref{pde3} as
\begin{equation} \label{ode2}
  -\bar \xi_{rr}  - \frac{1}{r} \bar \xi_r + \frac{1}{r^2} \bar \xi  = g, \qquad  
  g := \left( \frac{2h^2}{r^2} - \be^2 \right) \bar \xi + \frac{1}{\be} (s + N).
\end{equation}
We will single out one term from the nonlinear terms $N$ which needs to 
be treated carefully, and write
\[
  g = g_1 + g_2, \qquad |g_2| \lec \be^2 \frac{|\bar \xi|^3}{r^2},
\]
where using~\eqref{Nbounds} and~\eqref{sourcenew} it is easily checked that
\[
  \| \bar \xi \|_{X} \lec 1 \; \implies \; \left\| \frac{g_1}{r} \right\|_{L^1_{\leq 1}} \lec 1.
\]

We claim the representation formula
\begin{equation}  \label{rep}
  2 \left( \frac{\bar \xi(r)}{r} - \bar \xi(1) \right) = 
  \frac{1}{r^2} \int_0^r y g(y) \; y dy + \int_r^1 \frac{1}{y} g(y) \; y dy
  - \int_0^1 y g(y) \; y dy
\end{equation}
holds.
Indeed, since $r$ and $1/r$ are fundamental solutions of~\eqref{ode2} with $g=0$,
by the variation of parameters formula and ODE uniqueness,
\begin{equation}  \label{rep2}
  2 \frac{\bar \xi(r)}{r} = \frac{1}{r^2} \left( a - \int_r^1 y g(y) \; y dy \right)
  + b + \int_r^1 \frac{1}{y} g(y) \; y dy
\end{equation}
for some $a, b \in \R$. Now for $r \leq 1$,
\begin{equation}  \label{M1}
\begin{split}
  \left| \int_r^1 \frac{1}{y} g(y) \; y dy \right| &\lec 
  \left\| \frac{g_1}{r} \right \|_{L^1_{\leq 1}} + 
  \be^2 \int_r^1 \frac{1}{y} \frac{|\bar \xi|^3}{y^2} \; y dy \\
  & \lec 1 +  \be^2 \| \bar \xi \|_X^2 M(r) \lec 1 + \be^2 M(r) \lec \frac{1}{r}
\end{split}
\end{equation}
using~\eqref{crude},
and so since $\bar \xi \in L^\infty$ we must have
\[
  a = \int_0^1 y g(y) \; y dy, \quad \mbox{ and so } \quad
  2 \frac{\bar \xi(r)}{r} = \frac{1}{r^2} \int_0^r y g(y) \; y dy
  + b + \int_r^1 \frac{1}{y} g(y) \; y dy,
\]
and then the formula~\eqref{rep} follows from plugging in $r=1$ to find $b$.

Now using~\eqref{M1} again, as well as
\[
\begin{split}
  \left| \int_0^r y g(y) \; y dy \right| &\lec 
  r^2 \left \| \frac{g_1}{r} \right \|_{L^1_{\leq 1}} + 
  \be^2 \int_0^r y \frac{|\bar \xi(y)|^3}{y^2} \; y dy \\
  &\lec r^2 + \be^2 \int_0^r |\bar \xi(y)| dy,
\end{split}
\]
in~\eqref{rep}, we arrive at
\[
  \frac{|\bar \xi(r)|}{r} \lec 1 + \be^2 M(r) +
  \frac{\be^2}{r^2} \int_0^r |\bar \xi(y)| dy
  \lec 1 + \be^2 M(\delta) + \frac{\be^2}{r^2} \int_0^r |\bar \xi(y)| dy
\]
if $\delta \leq r \leq 1$.
Taking supremum over such $r$ gives
\[
  M(\delta) \lec 1 + \be^2 M(\delta) + 
  \be^2 \sup_{\delta \leq r \leq 1} \frac{1}{r^2} \int_0^r |\bar \xi(y)| dy,
\]
and so, since $\be \ll 1$,
\begin{equation}  \label{start}
  \frac{|\bar \xi(\delta)|}{\delta} \leq M(\delta) \leq C \left( 1 + 
  \be^2 \sup_{\delta \leq r \leq 1} \frac{1}{r^2} \int_0^r \frac{|\bar \xi(y)|}{y}
  \; y dy \right)
\end{equation}
for some constant $C$.
To finish the argument, we will iterate this relation, beginning with the
crude bound~\eqref{crude}, i.e. 
\begin{equation}  \label{avest}
   \frac{|\bar \xi(r)|}{r} \leq \frac{C}{r} \; \implies \;
   \sup_{\delta \leq r \leq 1} \frac{1}{r^2} \int_0^r \frac{|\bar \xi(y)|}{y} \; y dy \leq
   C \sup_{\delta \leq r \leq 1} \frac{1}{r^2} \int_0^r dy = \frac{C}{\delta},
\end{equation}
so by~\eqref{start},
\[
  \frac{|\bar \xi(\delta)|}{\delta} \leq C \left( 1 + 
  \be^2 \frac{C}{\delta} \right).
\]
We can use this improved estimate in place of the crude one in~\eqref{avest},
and return again to~\eqref{start} to even further improve the estimate.
Iterating in this way $k$ times yields
\[
   \frac{|\bar \xi(\delta)|}{\delta} \leq C \left( \sum_{j=0}^{k-1} (C \be^2)^j 
   + \frac {(C \be^2)^k}{\delta}  \right)
   \lec 1 + \frac {(C \be^2)^k}{\delta} ,
\]
and the Lemma is proved by taking $k \to \infty$. 
\end{proof}

The idea behind the proof of estimate~\eqref{deriv} 
is to exploit the factorized structure of the linearized operator
$H$ appearing in the Euler-Lagrange equation~\eqref{pde3} for $\xi$,
\[
  H = F^* F, \qquad F = \p_r + \frac{1}{r} \hat h = h \p_r \frac{1}{h}, 
  \qquad F^* = -\partial_r + \frac{1}{r}(\hat{h } - 1),
\]
by applying the first-order factor $F$ to~\eqref{pde3}, to obtain an equation for $F \xi$:
\begin{equation}  \label{pde'}
  (\hat H + \be^2) \eta = F s + F N, \qquad
  \eta := F \xi,
\end{equation}
where
\[
  \hat H = F F^* 
  = -\Delta_{r}  + \hat{V}, \qquad  \hat{V}:= \frac{4}{r^2(r^2 + 1)}.
\]
Equation~\eqref{pde'} is better than equation~\eqref{pde3} in two ways.
First, since $F h = 0$, the term $-\be^2 h$ in $s_{k,\al,\be}$ which
was the main term in the bound $\| \xi \|_X \lec \be$, is completely absent from the
source term $F s_{k,\al,\be}$ in~\eqref{pde'}. More precisely, 
\[
  F h = 0, \quad F \frac{h^2}{r} = -\frac{h^2}{r^2}(1 + \hat h) \;\; \implies \;\;
  F s = -(k + \be \al) \be \frac{h^2}{r^2}(1 + \hat h) ,
\]
which is well-localized. In particular, defining the space $L^{1,\log}$ by the norm
\[
  \| f \|_{L^{1,\log}} := \int_0^\infty \log(2 + r) |f(r)| \; r dr,
\]
(this slight refinement of $L^1$ is needed in Proposition~\ref{L1} below), 
and observing that $\frac{h^2}{r^2}(1 + \hat h) \in L^{1,\log}$, 
\begin{equation}  \label{Fs}
  \| F s \|_{L^{1,\log}} \lec k \be \lec \be^2 \log \left( \frac{1}{\be} \right).
\end{equation}
by~\eqref{xiest2}.
Second, $\hat H$ (unlike $H$) is a {\it positive} operator on $L^2$
(with domain $D(\hat H) = D(-\Delta_r + \frac{4}{r^2})$), and in particular has 
no zero-energy resonance or eigenvalue. 
So a uniform in $\be$ bound for the resolvent $(\hat H + \be^2)^{-1}$, 
without any orthogonality condition, is possible:
\begin{prop} \label{L1}
For any $f \in L^{1,\log} \cap L^2 \cap C((0,\infty))$, 
\begin{equation} \label{L1est}
  \| (\hat H + \be^2)^{-1} f \|_{L^\infty \cap r^2 L^1_{\leq 1}}
  \lec \| f \|_{L^{1,\log}} .
\end{equation}   
\end{prop}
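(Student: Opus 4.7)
The plan is to construct the Green's function of $\hat H + \be^2$ explicitly and estimate it uniformly in $\be \in (0,1]$. Since $\hat H$ is positive with no zero eigenvalue or resonance on its natural $L^2$ domain, the $\be \to 0$ limit of the resolvent is controlled, and the argument hinges on a precise cancellation between a $\log(1/\be)$ factor in the decaying solution at infinity and the Wronskian. I first write down the two solutions of $\hat H u = 0$: from $F^* \varphi_0 = 0$ one reads off $\varphi_0(r) = 1 + 1/r^2$, and reduction of order gives the independent solution
\[
  \varphi_1(r) = \frac{r^2+1}{2r^2}\left[\log(r^2+1) + \frac{1}{r^2+1} - 1\right],
\]
with asymptotics $\varphi_0 \sim 1/r^2$, $\varphi_1 \sim r^2/4$ as $r \to 0$, and $\varphi_0 \to 1$, $\varphi_1 = \log r - 1/2 + O(1/r^2)$ as $r \to \infty$.

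For $\be > 0$ I would construct $\phi_\be$, the regular-at-zero solution (with $\phi_\be \sim r^2/4$) of $(\hat H + \be^2) u = 0$, by Volterra iteration from $\varphi_1$; and $\psi_\be$, the unique exponentially decaying solution at infinity, as a perturbation of $K_0(\be r)$, exploiting $\hat V = O(r^{-4})$ at infinity. Matching $\psi_\be$ to its intermediate-zone form via $K_0(z) = -\log(z/2) - \gamma + O(z^2 \log z)$ as $z \to 0^+$ yields
\[
  \psi_\be(r) = a_\be \varphi_0(r) + b_\be \varphi_1(r) + o(1) \text{ for } 1 \lec r \lec 1/\be, \quad a_\be = \log(1/\be) + O(1), \quad b_\be = -1 + o(1).
\]
The conserved Wronskian $W_\be := r(\phi_\be \psi_\be' - \phi_\be' \psi_\be)$, evaluated from the $r \to 0$ asymptotics $\phi_\be \sim r^2/4$ and $\psi_\be \sim a_\be/r^2$, equals $-a_\be + O(1) = -\log(1/\be) + O(1)$, making $|\psi_\be(r)/W_\be| \lec 1$ uniformly for $r \lec 1/\be$.

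Variation of parameters then gives
\[
  u(r) := (\hat H + \be^2)^{-1} f(r) = \frac{1}{W_\be}\left[\psi_\be(r)\int_0^r \phi_\be(r') f(r') r' dr' + \phi_\be(r)\int_r^\infty \psi_\be(r') f(r') r' dr'\right],
\]
and a case analysis across the three regions $r \lec 1$, $1 \lec r \lec 1/\be$, and $r \gec 1/\be$ (using $\phi_\be(r) \lec \log(2+r)$ in the intermediate range and exponential damping beyond $1/\be$) produces the pointwise kernel bound $|G_\be(r,r')| \lec \log(2+r')$, uniform in $r > 0$ and $\be \in (0,1]$. Integrating against $f$ gives $\|u\|_{L^\infty} \lec \|f\|_{L^{1,\log}}$, while the pointwise bound $|u(r)| \lec r^2 \|f\|_{L^{1,\log}}$ on $r \leq 1$ (forced by $\phi_\be(r) \lec r^2$) yields the weighted $r^2 L^1_{\leq 1}$ estimate. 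The main obstacle is the uniform expansion $a_\be = \log(1/\be) + O(1)$: without it the Wronskian cancellation fails and the kernel bound blows up as $\be \to 0$, so establishing this matched asymptotics—via a uniform Volterra argument for the correction $\psi_\be - K_0(\be r)$ in the outer region and a careful comparison to $\varphi_0, \varphi_1$ in the intermediate zone—is the technical heart of the proof.
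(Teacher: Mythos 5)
Your plan is a genuinely different route from the paper's, and it is plausible, but it is substantially more technically demanding than what the paper actually does, and quite a bit of the heavy lifting is left as a sketch.

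The paper's proof never constructs the $\be$-dependent Green's function at all. Instead it exploits positivity: since $\hat V > 0$, a maximum-principle argument shows that $(\hat H+\be^2)^{-1}$ preserves positivity, and that $\hat H^{-1} f^{\pm} \geq (\hat H+\be^2)^{-1} f^{\pm} \geq 0$, hence the pointwise comparison
\[
  \left| (\hat H + \be^2)^{-1} f \right| \;\leq\; \hat H^{-1} |f|
\]
holds. This reduces the whole uniform-in-$\be$ estimate to a single computation with the $\be=0$ Green's function, which has the simple closed form built from $u(r)=(r^2+1)/r^2$ and $v(r)=(r^2+1)\log(r^2+1)/r^2 - 1$ (your $\varphi_0$ and $\varphi_1$ up to normalization), against which $\|f\|_{L^{1,\log}}$ is easy to integrate. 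The $\log(1/\be)$ behaviour of the decaying solution and of the Wronskian, which you correctly identify as the crux of the matched-asymptotics approach, is sidestepped entirely: positivity absorbs it for free.

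What your approach buys, in principle, is more information --- a pointwise description of the kernel $G_\be(r,r')$ in all three regimes --- which could be useful if one wanted refined or region-by-region bounds. What it costs is the need to carry out a genuinely uniform matched asymptotic analysis: you must prove (not just assert) that $\psi_\be - K_0(\be\,\cdot)$ is uniformly small via a Volterra/Jost-type construction using $\hat V = O(r^{-4})$, that $\phi_\be(r) \lec \log(2+r)$ holds uniformly up to $r\sim 1/\be$ (this is not automatic; you must show the $I_0(\be r)$ component of $\phi_\be$ has a coefficient small enough that it does not spoil the bound on $r\lec 1/\be$), and that the intermediate-zone matching $a_\be=\log(1/\be)+O(1)$, $b_\be=-1+o(1)$ is rigorous with controlled overlap-region errors. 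None of these steps is present beyond the level of a sketch, and the third in particular is delicate: the $\be^2$ term is not uniformly negligible throughout $1\lec r\lec 1/\be$ (it becomes comparable to the curvature of $\varphi_1\sim\log r$ already near $r\sim (\be\sqrt{\log(1/\be)})^{-1}$), so the claimed intermediate-zone expansion in terms of $\varphi_0,\varphi_1$ alone needs to be patched with the Bessel form before $r$ reaches $1/\be$. So: the proposal is not wrong, but it is a plan rather than a proof, and the paper's comparison-principle argument is a much shorter and cleaner path to the same estimate.
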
  
This proved in Section~\ref{fullresolvent'}.

Combining~\eqref{L1est} with~\eqref{Fs} gives
\begin{equation}  \label{RFs}
  \| (\hat H + \be^2)^{-1} Fs \|_{L^\infty \cap r^2 L^1_{\leq 1}}
  \lec \be^2 \log \left( \frac{1}{\be} \right).
\end{equation}

From the expression for $N = N_{\be,\k,\al}(\xi)$ we can derive a
pointwise estimate of $|F N| \lec |N_r| + \frac{1}{r} |N|$
analogous to~\eqref{Nbound}, using $\| \xi \|_{L^\infty} \ll 1$ 
to simplify slightly:
\[ 
  N = N^{(1)} + N^{(2)}
\]
with
\[
  |(N^{(1)})_r| + \frac{1}{r} |N^{(1)}| \lesssim 
  \frac{|\xi|}{r} \left( \frac{h}{r} + \frac{|\xi|}{r} \right) \left( |\xi_r| + \frac{|\xi|}{r} \right) 
  + (k  + \be (1 + |\al|) ) \be \frac{h}{r} \left( |\xi_r| + \frac{|\xi|}{r}  \right)  
\]
and
\[
   |(N^{(2)})_r| + \frac{1}{r} |N^{(2)}| \lesssim 
   k \be \frac{|\xi|}{r} \left( |\xi_r| + \frac{\xi}{r} \right)
   + \be^2 (1 + |\al|) |\xi| (h + \xi^2) \left( |\xi_r| + \frac{|\xi|}{r} \right).
\]

Using the pointwise bound above, $F N^{(1)} \in L^{1,\log}$, with
\[
\begin{split}
  \| F N^{(1)} \|_{L^{1,\log}} &\lec
  \| \xi \|_X^2 \left( \left \| \frac{h}{r} \right \|_{L^{\infty,\log}} + 
  \left\| \frac{\xi}{r} \right \|_{L^{\infty,\log}} \right) +
  (k   + \be (1 + |\al|) ) \be \| \xi \|_X \left\| \frac{h}{r} \right \|_{L^{2,\log}} \\
  &\lec \be^2 \left( 1 +  \left\| \frac{\xi}{r} \right \|_{L^\infty} +  
  \| \xi \|_{L^\infty} \right ) \lec  \be^2,
\end{split}
\]
using $h/r \in L^{\infty,\log} \cap L^{2,\log}$, 
$\| \xi \|_{L^\infty} \lec \| \xi \|_X \lec \be$, \eqref{xi/r}, and 
\[
  \left\| \frac{\xi}{r} \right\|_{L^{\infty,\log}} = \sup_r \log(2+r) \frac{|\xi(r)|}{r} \lec
  \sup_{r \leq 1} \frac{|\xi(r)|}{r} + \left( \sup_{r \geq 1} \frac{\log(2+r)}{r} \right)
  \| \xi \|_{L^\infty} \lec   \left\| \frac{\xi}{r} \right \|_{L^\infty} +  
  \| \xi \|_{L^\infty}.
\]
Then by~\eqref{L1est},
\begin{equation}  \label{RFN1}
  \| (\hat H + \be^2)^{-1} F N^{(1)} \|_{L^\infty \cap L^1_{\leq 1}} \lec \be^2.
\end{equation}

Since the terms of $F N^{(2)}$ decay too slowly
to lie in $L^{1,\log}$, we also consider the resolvent
$(\hat{H} + \beta)^{-1}$ acting on $L^2$ functions, and 
get a bound with some loss in $\be$: 
\begin{prop}  \label{L2prop}
For $f \in L^2$, 
\begin{equation}   \label{boundL2}
  \| (\hat{H} + \beta^2)^{-1} f \|_{L^\infty \cap r^2 L^1_{\leq 1}}
  \lesssim \frac{1}{\beta} \log^{\frac{3}{2}}(\frac{1}{\beta}) \| f \|_{L^2}.
\end{equation}
\end{prop}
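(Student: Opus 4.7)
The plan is to decompose $f$ at the threshold scale $R = 1/\be$ and combine Proposition~\ref{L1} with a direct Green's function estimate on the tail. Write $f = f_1 + f_2$ with $f_1 := f\chi_{[0,R]}$ and $f_2 := f\chi_{[R,\infty)}$. For the near-origin piece, Cauchy--Schwarz in the logarithmic weight gives
\[
  \|f_1\|_{L^{1,\log}} \leq \|f\|_{L^2} \left( \int_0^R \log^2(2+r)\, r\,dr \right)^{1/2} \lec R\log R \cdot \|f\|_{L^2} = \frac{1}{\be}\log\!\Big(\frac{1}{\be}\Big)\|f\|_{L^2},
\]
so Proposition~\ref{L1} immediately yields $\|(\hat H + \be^2)^{-1} f_1\|_{L^\infty \cap r^2 L^1_{\leq 1}} \lec \be^{-1}\log(1/\be)\|f\|_{L^2}$, already within the target bound.

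For the tail $f_2$, I would use the Green's function $G_\be(r,s)$ of $\hat H + \be^2$: by Cauchy--Schwarz,
\[
  \big|(\hat H + \be^2)^{-1} f_2(r)\big| \leq \|G_\be(r,\cdot)\chi_{[R,\infty)}\|_{L^2(s\,ds)}\|f\|_{L^2}.
\]
Standard Sturm--Liouville theory factorizes $G_\be(r,s) = \phi_-^\be(r_<)\phi_+^\be(r_>)/W(\be)$, where $\phi_-^\be(r) \sim r^2$ near the origin and $\phi_+^\be(r) \sim K_0(\be r)$ at infinity. The absence of a zero-energy resonance or eigenvalue for $\hat H$ manifests in its regular zero-energy solution growing only logarithmically at infinity, which by standard matching across $r \sim 1$ forces the Wronskian bound $|W(\be)| \gec \log(1/\be)$. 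Combined with the explicit Bessel bound $\|K_0(\be\cdot)\chi_{[R,\infty)}\|_{L^2(s\,ds)} \lec 1/\be$ and the asymptotics of $\phi_-^\be$ in the three regimes $r \lec 1$, $1 \lec r \lec 1/\be$, and $r \gec 1/\be$, this produces the uniform-in-$r$ estimate
\[
  \sup_r \|G_\be(r,\cdot)\chi_{[R,\infty)}\|_{L^2(s\,ds)} \lec \frac{1}{\be}\log^{3/2}\!\Big(\frac{1}{\be}\Big),
\]
and, since $\phi_-^\be(r) \lec r^2$ for $r \leq 1$, the corresponding $r^2 L^1_{\leq 1}$ control near the origin.

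The main obstacle will be quantifying $\phi_-^\be$ and $\phi_+^\be$ across the crossover region $r \sim 1/\be$, where the behavior transitions from (near-resonant) logarithmic to Bessel/exponential; the interplay of these logarithmic factors with the Wronskian degeneration $|W(\be)| \sim \log(1/\be)$ is what produces the stated $\log^{3/2}$ loss. An alternative and perhaps cleaner route avoids the explicit Green's function bookkeeping: a real-interpolation argument between \eqref{L1est} (viewed as a weighted $L^1 \to L^\infty$ bound) and the trivial $L^2 \to L^2$ estimate $\|(\hat H + \be^2)^{-1}\|_{L^2 \to L^2} \leq \be^{-2}$ should deliver the same kind of logarithmic loss.
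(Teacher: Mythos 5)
Your strategy is genuinely different from the paper's. The paper proves Proposition~\ref{L2prop} by a self-contained energy method: pairing $(\hat H + \be^2)\eta = f$ against $\eta$ gives $\|\eta\|_{L^2} \lec \be^{-2}\|f\|_{L^2}$ and $\|\eta_r\|_{L^2} + \|\hat V^{1/2}\eta\|_{L^2} \lec \be^{-1}\|f\|_{L^2}$; the $L^\infty$ bound then follows by integrating $\eta_r\eta$ in the three regimes $r\le 1$, $1\le r\le 1/\be$, $r\ge 1/\be$ (the $\log^{1/2}$ arising in the middle regime), and the $r^2 L^1_{\le 1}$ bound from these together with an elliptic $L^2$ estimate on $\eta/r^2$. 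Your split $f = f_1 + f_2$ at $R = 1/\be$, applying Proposition~\ref{L1} to $f_1$ and a Green's function estimate to $f_2$, is a reasonable alternative strategy, and the $f_1$ computation is correct.

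However, the hard part is precisely what you leave unproved: the asymptotics of $\phi_\pm^\be$ through the crossover $r \sim 1/\be$ and the Wronskian bound are the substance of the estimate, and you flag them as ``the main obstacle'' without resolving them. Your framing is also misleading: if $|W(\be)| \sim \log(1/\be)$ (plausible with the $K_0$ normalization of $\phi_+^\be$), the Wronskian sits in the \emph{denominator} of $G_\be = \phi_-\phi_+/W$, so its growth helps rather than producing the $\log^{3/2}$ loss. Indeed, taken at face value your sketch ($\be^{-1}\log(1/\be)$ from $f_1$ via Proposition~\ref{L1}, $\be^{-1}$ from the $K_0$ computation for $f_2$) would yield $\be^{-1}\log(1/\be)$, strictly sharper than the stated $\be^{-1}\log^{3/2}(1/\be)$. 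That should prompt you either to check the regimes $r,s \gec 1/\be$ more carefully, or to notice that the paper's proof is in fact slightly wasteful at the $r^2 L^1_{\le 1}$ step (it uses the global $L^\infty$ bound there instead of the sharper $|\eta(r)| \lec \be^{-1}\|f\|_{L^2}$ available for $r\le 1$). Two further issues: $f_1 = f\chi_{[0,R]}$ is not continuous, so Proposition~\ref{L1} does not apply directly as stated and an approximation step is needed; and the proposed interpolation alternative does not work, since interpolating $L^1\to L^\infty$ with $L^2 \to L^2$ stays on the segment joining $(1,0)$ and $(1/2,1/2)$ in the $(1/p,1/q)$ plane and never reaches the $L^2 \to L^\infty$ point $(1/2,0)$.
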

This is proved in Section~\ref{fullresolvent'}.

Using the pointwise bound above, $F N^{(2)} \in L^2$, with
\[
  \| F N^{(2)} \|_{L^2} \lec
  k \be \left\| \frac{\xi}{r} \right\|_{L^\infty} \| \xi \|_X
  + \be^2(1 + |\al|) \| \xi \|_{L^\infty}( 1 + \| \xi \|_{L^\infty}^2) \| \xi \|_X
  \lec \be^4 \log \left( \frac{1}{\be} \right),
\]
using~\eqref{relation}, \eqref{xiest2}, and~\eqref{xi/r}. 
Then by~\eqref{boundL2},
\begin{equation}  \label{RFN2}
  \| (\hat H + \be^2)^{-1} F N^{(2)} \|_{L^\infty \cap L^1_{\leq 1}} \lec 
   \be^3 \log^{\frac{5}{2}} \left( \frac{1}{\be} \right).
\end{equation}
Using~\eqref{RFs}, \eqref{RFN1} and~\eqref{RFN2} in~\eqref{pde'} gives
\begin{equation}  \label{etaest}
  \| \eta \|_{L^\infty \cap L^1_{\leq 1}} \lec \be^2 \log \left( \frac{1}{\be} \right).
\end{equation}

Finally, we by recover the desired estimate~\eqref{deriv} for $\xi$
from the estimate~\eqref{etaest} for $\eta = F \xi$ using:
\begin{lem} 
\begin{equation} \label{recover}
  \| \xi \|_{X^\infty} \lec
  \log \left( \frac{1}{\be} \right) \| \eta \|_{L^\infty \cap r^2 L^1_{\leq 1}}.
\end{equation}
\end{lem}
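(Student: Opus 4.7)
The plan is to exploit the factorized form $F = h \p_r(1/h)$ so that $F\xi = \eta$ becomes $\p_r(\xi/h) = \eta/h$, then integrate and use the orthogonality~\eqref{neworth2} (which holds in the context where the lemma is applied) to pin down the constant of integration. From $\eta = F\xi = \xi_r + (\hat h/r)\xi$ we get $\|\xi_r\|_{L^\infty} \leq \|\eta\|_{L^\infty} + \|\xi/r\|_{L^\infty}$, so it suffices to control $\|\xi/r\|_{L^\infty}$.

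Since $h(1) = 1$, integrating $\p_r(\xi/h) = \eta/h$ from $1$ to $r$ yields the representation
\begin{equation*}
  \xi(r) = h(r)\left(\xi(1) + \int_1^r \frac{\eta(s)}{h(s)}\, ds\right).
\end{equation*}
Dividing by $r$ and estimating pointwise: on $[0,1]$, where $h(s) \geq s$, one has $\int_r^1 |\eta|/h\, ds \leq \|\eta\|_{r^2 L^1_{\leq 1}}$, combined with $h/r \leq 2$; on $[1,\infty)$, $1/h(s) \lec s$ together with $h/r \lec 1/(1 + r^2)$ gives $(h/r)\int_1^r |\eta|/h\, ds \lec \|\eta\|_{L^\infty}$ uniformly in $r$. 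Hence
\begin{equation*}
  \|\xi/r\|_{L^\infty} \lec |\xi(1)| + \|\eta\|_{L^\infty \cap r^2 L^1_{\leq 1}},
\end{equation*}
reducing the problem to bounding $|\xi(1)|$.

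For $\xi(1)$, substitute the representation into $\langle W\xi, R_0(\be)h\rangle = 0$ to obtain
\begin{equation*}
  \xi(1)\,\langle Wh, R_0(\be)h\rangle = -\langle f, R_0(\be)h\rangle, \qquad f(r) := W(r)h(r)\int_1^r \frac{\eta(s)}{h(s)}\, ds.
\end{equation*}
The denominator is $4\log(1/\be) + O(1)$ by~\eqref{innercomp}. For the numerator, the same calculus used above, combined with $Wh \lec r$ on $[0,1]$ and $Wh \lec r^{-5}$ at infinity, gives $\|f\|_{L^1 \cap r^{-3}L^\infty} \lec \|\eta\|_{L^\infty \cap r^2 L^1_{\leq 1}}$; the $q = 3$ case~\eqref{innerbound2} of Lemma~\ref{key} then yields $|\langle f, R_0(\be)h\rangle| \lec \log^2(1/\be)\,\|\eta\|_{L^\infty \cap r^2 L^1_{\leq 1}}$. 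Dividing gives $|\xi(1)| \lec \log(1/\be)\,\|\eta\|_{L^\infty \cap r^2 L^1_{\leq 1}}$, and combining with the earlier reduction yields~\eqref{recover}.

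The main subtlety is that $f$ decays only as $r^{-3}$ at infinity (a consequence of the quadratic growth of $\int_1^r 1/h\, ds$ partially cancelled by the quintic decay of $Wh$), so the weight $r^q f$ is only in $L^\infty$ for $q \leq 3$; this forces us into the $q = 3$ branch of Lemma~\ref{key}, which produces $\log^2(1/\be)$ rather than a single logarithm. The extra log is exactly absorbed by the logarithmic factor in the denominator coming from~\eqref{innercomp}, yielding the clean $\log(1/\be)$ factor asserted in~\eqref{recover}.
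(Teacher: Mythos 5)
Your argument is correct and matches the paper's proof essentially line by line: the representation $\xi = h(r)(\xi(1) + \int_1^r \eta/h\,ds)$ is the paper's decomposition $\xi = ch + \xi^\sharp$ with $c = \xi(1)$, the bound $\|\xi/r\|_{L^\infty} \lec |\xi(1)| + \|\eta\|_{L^\infty\cap r^2 L^1_{\leq 1}}$ is the paper's estimate~\eqref{xibar}, and your function $f = Wh\int_1^r\eta/h\,ds$ is exactly the paper's $W\xi^\sharp$, after which the orthogonality condition, \eqref{innercomp}, and \eqref{innerbound2} are applied identically. The minor reorganization (passing to $\|\xi/r\|_{L^\infty}$ first and using $\|\xi_r\|_{L^\infty} \leq \|\eta\|_{L^\infty} + \|\xi/r\|_{L^\infty}$, rather than bounding $\|\xi^\sharp\|_{X^\infty}$ directly) is cosmetic.
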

\begin{proof}
Solving the first order equation $\eta = F \xi = h \left( \frac{\eta}{h} \right)_r$ gives 
\begin{equation}  \label{xisplit}
  \xi = c h + \xi^\sharp, \qquad
  \xi^\sharp(r) = h(r) \int_1^r \frac{\eta(s)}{h(s)} ds,
\end{equation}
for some $c = c(\eta) \in \R$. 

The estimate
\begin{equation}  \label{xibar}
  \| \xi^\sharp \|_{X^\infty}
  \lec \| \eta \|_{L^\infty \cap r^2 L^1_{\leq 1}},
\end{equation}
follows from the asymptotic behaviour of $h$ 
\[
  h(r) \sim r, \;\; h'(r) \sim 1 \; \mbox{ for } \; r \leq 1, \qquad
  h(r) \sim \frac{1}{r}, \;\; h'(r) \sim \frac{1}{r^2} \; \mbox{ for } \;  r \geq 1,
\]
and the estimates: for $r \leq 1$,
\[
  \left| \int_1^r \frac{\eta(s)}{h(s)} ds \right| \lec
  \int_r^1 \frac{|\eta(s)|}{s^2} \; s ds \lec \| \eta \|_{r^2 L^1_{\leq 1}},
\]
and for $r \geq 1$,
\[
  \left| \int_1^r \frac{\eta(s)}{h(s)} ds \right| \lec
  \| \eta \|_{L^\infty} \int_1^r s ds \lec  r^2 \| \eta \|_{L^\infty}. 
\]

To bound the constant $c$ in~\eqref{xisplit}, we use
the orthogonality condition~\eqref{orth1}, in the form~\eqref{orth2}:
\begin{equation}  \label{orth3}
  0 = \langle \xi, \; W R_0(\be) h \rangle =
  c \langle h,\; W R_0(\be) h \rangle + \langle \xi^\sharp, \; W R_0(\be) h\rangle.
\end{equation}
Since
\[
  \|  W \xi^\sharp \|_{L^1 \cap r^{-3} L^\infty} \leq
  \| r W \|_{L^1 \cap r^{-3} L^\infty} \left\| \frac{\xi^\sharp}{r} \right\|_{L^\infty} 
  \lec \left\| \xi^\sharp \right\|_{X^\infty}, 
\]
we may apply~\eqref{innerbound2}, as well as~\eqref{innercomp}
in~\eqref{orth3} to get
\[
  \left( 4 \log \left( \frac{1}{\be} \right) + O(1) \right) c  =
  O \left( \log^2 \left( \frac{1}{\be} \right)  \left\| \xi^\sharp \right\|_{X^\infty} \right)
\]
and so using~\eqref{xibar},
 \[
   |c| \lesssim \log\left (\frac{1}{\be}\right )  \left\| \xi^\sharp \right\|_{X^\infty} \lec
   \log\left (\frac{1}{\be}\right ) \| \eta \|_{L^\infty \cap r^2 L^1_{\leq 1}}.
 \]
This, together with~\eqref{xisplit} and~\eqref{xibar}, gives~\eqref{recover}.
\end{proof}

%
%
%

Combining~\eqref{recover} and~\eqref{etaest} shows~\eqref{deriv} and
completes the proof of Theorem~\ref{estimates}.
\end{proof} 

\subsection{Uniqueness}  \label{uniqueness}

\begin{thm}  \label{unique}
Fix $A \geq 0$. 
There is $k_0 > 0$ such that for all $k \in (0,k_0]$,
$\al \in [-A, 1]$ and $\be > 0$, there is a unique minimizer of
$E_{k,\al,\be}$ in $X_1$.
\end{thm}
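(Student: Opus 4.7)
The plan is to apply the Lyapunov--Schmidt machinery of Section~\ref{sec:estimates} to any two putative minimizers and show that both the scale and the remainder are uniquely determined. Throughout, by the scaling relation~\eqref{minscale} it suffices to take $\be = 1$. So suppose $v^{(1)}, v^{(2)} \in X_1$ both minimize $E_{k,\al,1}$. Theorems~\ref{convergetoQ} and~\ref{estimates} associate to each $v^{(i)}$ a scale $\be^{(i)}_k \to 0$ and a remainder $\xi^{(i)} \in X$ with
\[
  v^{(i)}(\be^{(i)}_k r) = Q(r) + \xi^{(i)}(r), \qquad \| \xi^{(i)} \|_X \lec \be^{(i)}_k,
\]
satisfying the orthogonality $\langle W \xi^{(i)}, R_0(\be^{(i)}_k) h \rangle = 0$, the reduced Euler--Lagrange equation~\eqref{pde3}, and the parameter relation~\eqref{relation}. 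It suffices to prove that $\be^{(1)}_k = \be^{(2)}_k$ and $\xi^{(1)} = \xi^{(2)}$ for all small $k$, since then $v^{(1)} = v^{(2)}$.

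First I would establish uniqueness of the remainder $\xi$ at a \emph{fixed} scale $\be$. Rewrite the problem as a fixed-point equation for $\xi$ on the ball $\{ \xi \in X : \| \xi \|_X \leq C \be \}$, with orthogonality built in by projecting out the direction of $h$ as in Lemma~\ref{orthequiv} and Proposition~\ref{reparam}. The bounds~\eqref{sest} and~\eqref{N1}--\eqref{N3} apply verbatim to the difference $N_{k,\al,\be}(\xi_1) - N_{k,\al,\be}(\xi_2)$, giving a Lipschitz constant $o(1)$ as $k \to 0$ and hence a strict contraction. This pins down $\xi = \xi^*(\be;k)$ uniquely as a function of $(\be, k)$.

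Next I would show uniqueness of the scale $\be$. Substituting $\xi^*(\be;k)$ into the orthogonality-reparameterization equation~\eqref{neworth3} yields a scalar equation $G(\be; k) = 0$. The derivation of~\eqref{relation} in Section~\ref{remainder} gives the leading behaviour
\[
  G(\be; k) = - 4 \be^2 \log(1/\be) + 2 k \be + O(\be^2),
\]
so $\partial_\be G(\be;k) = - 8 \be \log(1/\be) + O(\be)$ is strictly negative for all $\be$ small. Since $G(\be^{(i)}_k; k) = 0$ for both $i = 1, 2$, strict monotonicity forces $\be^{(1)}_k = \be^{(2)}_k$, and combined with the first step gives $\xi^{(1)} = \xi^{(2)}$, hence $v^{(1)} = v^{(2)}$.

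I expect the main obstacle to lie in controlling $\partial_\be \xi^*(\be;k)$ well enough to justify that the $\be$-derivative of the $O(\be^2)$ error in $G$ is only $O(\be)$, not worse. This amounts to differentiating the fixed-point equation in $\be$ and rerunning the resolvent estimates of Section~\ref{remainder} on the derived equation; the crucial technical point is that $(H + \be^2)^{-1}$ depends on $\be$ sufficiently smoothly (as an operator on $X$, acting on functions orthogonal to $R_0(\be) W h$) that no additional singular factor of $\log(1/\be)$ is incurred, so that the leading negative term $-8 \be \log(1/\be)$ in $\partial_\be G$ remains dominant.
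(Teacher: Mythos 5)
Your proposal is structurally the same argument the paper makes: a Lyapunov--Schmidt reduction in which the remainder $\xi$ is pinned down in the complementary direction and the scale $\be$ is pinned down by monotonicity of the reduced scalar equation. The paper does not set up a contraction at fixed $\be$ and differentiate the resulting fixed point; it works directly with the two given minimizers and proves difference estimates, which is equivalent but avoids having to construct $\xi^*(\be;k)$ for scales that do not correspond to actual critical points. So this is not a genuinely different route.

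However, the step you flag as ``the main obstacle'' --- controlling the $\be$-dependence of the fixed point so that the error in $\partial_\be G$ stays subordinate to the leading $\be\log(1/\be)$ term --- is not a side issue but \emph{is} the heart of the proof, and your proposal leaves it open. The paper's Section~\ref{uniqueness} is devoted almost entirely to this: the Lipschitz bound $\|\xi^{(1)}-\xi^{(2)}\|_X \lec |\be^{(1)}-\be^{(2)}|$ (your $\partial_\be\xi^*=O(1)$ claim) requires the resolvent-difference estimate~\eqref{basic3} for $I(\be)=(I-R_0(\be)W)^{-1}$ applied to inputs that are \emph{not} orthogonal to $Wh$, the refined $L^2$ free-resolvent estimate~\eqref{refined2}, the monotonicity estimate Lemma~\ref{monotone} for $\gamma(\be)=\be\langle R_0(\be)Wh,h\rangle$, and the auxiliary difference estimate Lemma~\ref{morediff}. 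Without these, one cannot rule out the feared extra $\log(1/\be)$ loss in $\partial_\be\xi^*$, and the monotonicity of $G$ would not be established. Saying that the resolvent ``depends on $\be$ sufficiently smoothly'' is precisely what must be \emph{proved}, not assumed.

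There is also a small computational error: from $G(\be;k)=-4\be^2\log(1/\be)+2k\be+O(\be^2)$ one gets $\partial_\be G=-8\be\log(1/\be)+2k+O(\be)$, and the term $2k$ is \emph{not} $O(\be)$; by~\eqref{relation} it is comparable to $4\be\log(1/\be)$ and partially cancels the leading term, leaving $\partial_\be G\approx -4\be\log(1/\be)$. The conclusion (strict negativity near the critical scale) survives, but only by a factor of two, so the cancellation must be tracked; and one still needs the $O(\be^2)$ remainder (which depends on $\xi^*$) to have $\be$-derivative $o(\be\log(1/\be))$, which is again exactly the estimate the paper labors to obtain.
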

\begin{rem}
Theorem~\ref{unique} proves the uniqueness part of Theorem~\ref{mainthm}.
\end{rem}

\begin{proof}
The existence of minimizers is shown in Theorem~\ref{minexist}.
If the uniqueness fails, there are two families
$\{ u_k^{(\nu)} \} \subset X_1$, $\nu = 1, 2$, of minimizers
$E_{k,\al_k, \hat\be_k}(u_k^{(\nu)}) = e_{k,\al_k, \hat \be_k}$,
$\al_k \in [-A,1]$, $\hat \be_k > 0$,
which disagree along a sequence $(0,1) \ni k_j \to 0$:
$u_{k_j}^{(1)} \not= u_{k_j}^{(2)}$.

Applying Theorems~\ref{convergetoQ} and~\ref{estimates} to these
families of minimizers we have, dropping the subscripts $k$,
\begin{equation} \label{size0}
  u^{(\nu)}( \la^{(\nu)} r ) = Q(r) + \xi^{(\nu)}(r), \qquad
  \| \xi^{(\nu)} \|_X \lec \be^{(\nu)}, \qquad \nu = 1,2,
\end{equation}
for some $\la^{(\nu)} > 0$ with
\begin{equation}  \label{size}
  \be^{(\nu)} = \la^{(\nu)} \hat \be \to 0, \qquad
  k = \be^{(\nu)} \left( 2 \log \left( \frac{1}{\be^{(\nu)}} \right) + O(1) \right)
\end{equation}
where the orthogonality conditions~\eqref{orth1}
\begin{equation} \label{orth4}
  \F^{(\nu)} := R_0(\be^{(\nu)}) \left( s_{k,\al,\be^{(\nu)}} + N_{k,\al,\be^{(\nu)}}  
  (\xi^{(\nu)}) \right)  \quad \perp \; Wh
\end{equation}
hold, and the Euler-Lagrange equations~\eqref{pde3}, using~\eqref{factor1},
\begin{equation}  \label{pde4}
  \xi^{(\nu)} = (H + \be^{(\nu)} )^{-1} \left( s_{k,\al,\be^{(\nu)}} + N_{k,\al,\be^{(\nu)}}
  (\xi^{(\nu)}) \right)
  = I(\be^{(\nu)}) \F^{(\nu)}, \qquad I(\be) := \left( I - R_0(\be) W \right)^{-1}
\end{equation}
hold.
We will use difference estimates for $\xi^{(1)} - \xi^{(2)}$ and $\be^{(1)} - \be^{(2)}$
to show that for $k$ small enough, $\xi^{(1)} = \xi^{(2)}$ and $\be^{(1)} = \be^{(2)}$,
which shows $u^{(1)} = u^{(2)}$ and proves the uniqueness theorem.
Without loss of generality, we may assume
\begin{equation}  \label{size2}
  \be^{(2)} \lec \be^{(1)},
\end{equation}
otherwise we may restrict to a subsequence for which
$\be^{(1)} \lec \be^{(2)}$, and reverse the roles of $\nu=1$ and $\nu=2$ below.

From~\eqref{pde4},
\begin{equation}  \label{tri1}
\begin{split}
  \| \xi^{(1)} - \xi^{(2)} \|_X &\lec 
  \left\| I(\be^{(1)}) \left( \F^{(1)} - \F^{(2)} \right) \right\|_X + 
  \left\| \left( I(\be^{(1)}) -  I(\be^{(2)}) \right) \F^{(2)} \right\|_X \\
  &\lec \|  \F^{(1)} - \F^{(2)} \|_X + 
  \left\| \left( I(\be^{(1)}) -  I(\be^{(2)}) \right) \F^{(2)} \right\|_X,
\end{split}
\end{equation}
where the second line follows from \eqref{basic2}, since 
$\F^{(1)} - \F^{(2)} \perp Wh$ by~\eqref{orth4}.

From~\eqref{orth4},
\begin{equation}  \label{tri2}
\begin{split}
  \|  \F^{(1)} - \F^{(2)} \|_X &\lec 
  \| R_0( \be^{(1)} ) \left( s_{k,\al,\be^{(1)}} - s_{k,\al,\be^{(2)}} \right) \|_X  \\
  & \quad + 
  \left\| R_0( \be^{(1)} ) \left(  N_{k,\al,\be^{(1)}}(\xi^{(1)}) - 
  N_{k,\al,\be^{(2)}}(\xi^{(2)}) \right) \right\|_X  \\
  & \quad + \left\|  \left( R_0(\be^{(1)}) - R_0(\be^{(2)}) \right) 
  \left( s_{k,\al,\be^{(2)}} + N_{k,\al,\be^{(2)}}(\xi^{(2)}) \right) \right\|_X.
\end{split}
\end{equation}
For the first term in~\eqref{tri2}, note
\[
  s_{k,\al,\be^{(1)}} - s_{k,\al,\be^{(2)}}  = \left( (\be^{(2)})^2 - (\be^{(1)})^2 \right) h
  +  \al \left( (\be^{(1)})^2 -  (\be^{(2)})^2 \right) \frac{h^2}{r},
\]
and so by~\eqref{refined} and~\eqref{p=1},
\begin{equation}  \label{first}
   \| R_0( \be^{(1)} ) \left( s_{k,\al,\be^{(1)}} - s_{k,\al,\be^{(2)}} \right) \|_X   
   \lec  \left|\be^{(1)} - \be^{(2)} \right| \left( 1 + \frac {\be^{(2)}}{\be^{(1)}} \right)
   \lec  \left|\be^{(1)} - \be^{(2)} \right|. 
\end{equation}
For the third term in~\eqref{tri2}, the resolvent identity~\eqref{resid},
\eqref{p=-1}, together with the estimates~\eqref{sest}
and~\eqref{N1}-\eqref{N3} show
\begin{equation}  \label{third}
\begin{split}
  \left\|  \left( R_0(\be^{(1)}) - R_0(\be^{(2)}) \right) 
  \left( s_{k,\al,\be^{(2)}} + N_{k,\al,\be^{(2)}}(\xi^{(2)}) \right) \right\|_X 
  &\lec  \left| \be^{(1)} - \be^{(2)} \right| \left( \be^{(1)} + \be^{(2)} \right)
  \frac{1}{(\be^{(1)})^2} \be^{(2)}  \\
  &\lec  \left| \be^{(1)} - \be^{(2)} \right|.
\end{split}
\end{equation}
From the form~\eqref{Nform} of the nonlinear terms, and using also~\eqref{size0},
\eqref{size} and~\eqref{size2}, we can get difference estimate versions 
of~\eqref{Nbounds}:
\begin{equation}  \label{diffsplit}
  N_{k,\al,\be^{(1)}}(\xi^{(1)}) - N_{k,\al,\be^{(2)}}(\xi^{(2)}) = 
  \left( M_1 + M_2 + M_3 \right)  ( \xi^{(1)} - \xi^{(2)} )
  + \left( L_1 + L_2 + L_3 \right) ( \be^{(1)} - \be^{(2)} ),
\end{equation}
where
\[
\begin{split}
  & |M_1| \lec \frac{1}{r^2} \left(  (h + |\xi^{(1)}| + |\xi^{(2)}|)(|\xi^{(1)}| + |\xi^{(2)}|)
  \right) + k \be^{(1)} \frac{h}{r} \\
  & |M_2| \lec k \be^{(1)} \left( \frac{1}{r} +  h \right) 
  ( |\xi^{(1)}| + |\xi^{(2)}| ) \\
  & |M_3| \lec (\be^{(1)})^2  ( (\xi^{(1)})^2 + (\xi^{(2)})^2 ) \\
  & |L_1| \lec k \frac{h}{r} (  |\xi^{(1)}| + |\xi^{(2)}| ) \\
  & |L_2| \lec k \left( \frac{1}{r} + h \right) ( (\xi^{(1)})^2 + (\xi^{(2)})^2 )  \\
  & |L_3| \lec \be^{(1)} (  |\xi^{(1)}|^3 + |\xi^{(2)}|^3 )
\end{split}
\]
Then using~\eqref{p=1}, \eqref{p=0}, and~\eqref{p=-1},
\[
\begin{split}
  \| R_0(\be^{(1)}) M_1 ( \xi^{(1)} - \xi^{(2)} ) \|_X &\lec
  \| r^2 M_1 \|_{L^\infty} \| \xi^{(1)} - \xi^{(2)}  \|_X \\ &\lec
  \left( \| \xi^{(1)} \|_X +  \| \xi^{(2)} \|_X + k \be^{(1)}  \right) \| \xi^{(1)} - \xi^{(2)}  \|_X
  \\ &\lec \be^{(1)} \| \xi^{(1)} - \xi^{(2)}  \|_X, \\
  \| R_0(\be^{(1)}) M_2 ( \xi^{(1)} - \xi^{(2)} ) \|_X &\lec
  \frac{1}{\be^{(1)}} \| r M_2 \|_{L^\infty} \| \xi^{(1)} - \xi^{(2)}  \|_X \lec
  k  \be^{(1)} \| \xi^{(1)} - \xi^{(2)}  \|_X, \\
  \| R_0(\be^{(1)}) M_3 ( \xi^{(1)} - \xi^{(2)} ) \|_X &\lec
 \frac{1}{(\be^{(1)})^2} \| M_3 \|_{L^\infty} \| \xi^{(1)} - \xi^{(2)}  \|_X 
 \lec (\be^{(1)})^2 \| \xi^{(1)} - \xi^{(2)}  \|_X, \\
 \| R_0(\be^{(1)}) L_1 \|_X &\lec \| r L_1 \|_{L^2}
 \lec k  \left( \| \xi^{(1)} \|_X +  \| \xi^{(2)} \|_X \right) \lec k \be^{(1)}, \\
  \| R_0(\be^{(1)}) L_2 \|_X &\lec \frac{1}{\be^{(1)}} \| L_2 \|_{L^2} 
  \lec \frac{1}{\be^{(1)}} k  \left( \| \xi^{(1)} \|_X +  \| \xi^{(2)} \|_X \right) 
  \left( \| \xi^{(1)} \|_{L^\infty} +  \| \xi^{(2)} \|_{L^\infty} \right) \\
  & \lec k \be^{(1)}, \\
  \| R_0(\be^{(1)}) L_3 \|_X &\lec \frac{1}{(\be^{(1)})^2} \| L_3 \|_X
  \lec (\be^{(1)})^2.
\end{split}
\]
Using all of these in~\eqref{diffsplit}, for the second term in~\eqref{tri2} we get
\begin{equation}  \label{second}
  \left\| R_0( \be^{(1)} ) \left(  N_{k,\al,\be^{(1)}}(\xi^{(1)}) - 
  N_{k,\al,\be^{(2)}}(\xi^{(2)}) \right) \right\|_X   \lec
  \be^{(1)}  \| \xi^{(1)} - \xi^{(2)}  \|_X +
  k \be^{(1)} |\be^{(1)} - \be^{(2)}|.
\end{equation}
Combining~\eqref{first}, \eqref{third}, and~\eqref{second} we get an estimate
of the first term in~\eqref{tri1}:
\begin{equation}   \label{Fdiff}
  \|  \F^{(1)} - \F^{(2)} \|_X \lec
  |\be^{(1)} - \be^{(2)}| + \be^{(1)} \| \xi^{(1)} - \xi^{(2)}  \|_X.
\end{equation}

For the second term in~\eqref{tri1}, by resolvent identities
\[
\begin{split}
  \left( I(\be^{(1)}) - I(\be^{(2)}) \right) \F^{(2)} &= 
  \left( (\be^{(2)})^2 - (\be^{(1)})^2 \right)
  I(\be^{(1)}) R_0(\be^{(1)}) R_0(\be^{(2)}) W I(\be^{(2)}) \F^{(2)} \\
  &=   \left( (\be^{(2)})^2 - (\be^{(1)})^2 \right)
  I(\be^{(1)}) R_0(\be^{(1)}) R_0(\be^{(2)}) W \xi^{(2)},
\end{split}
\]
using~\eqref{pde4} in the last step.
The difficulty here is that $I(\be^{(1)})$ acts on a function which is not
$\perp W h$, and so bahaves badly as $\be^{(1)} \to 0$. Precisely,
\begin{equation}  \label{basic3}
  f \in X \;\; \implies \;\; 
  \|  I(\be) f  \|_X \lec \frac{1}{\be^2 \log \left(\frac{1}{\be}\right) }
  \left| \langle W h, \; f \rangle \right| + \| f \|_X
\end{equation} 
is proved in Section~\ref{fullresolvent}. Applying this,
\begin{equation}  \label{Idiff}
\begin{split}
  \left\|  \left( I(\be^{(1)}) - I(\be^{(2)}) \right) \F^{(2)} \right \|_X &\lec
  \left| \be^{(1)} - \be^{(2)} \right| \be^{(1)} \Big(
   \frac{1}{(\be^{(1)})^2 \log \left(\frac{1}{\be^{(1)}}\right) }
  \left| \left\langle R_0(\be^{(1)}) W h, \; R_0(\be^{(2)}) W \xi^{(2)} \rangle \right) \right|
  \\  & \qquad \qquad \qquad \qquad \qquad
  +  \|  R_0(\be^{(1)}) R_0(\be^{(2)}) W \xi^{(2)}\|_X \Big).
\end{split}
\end{equation}
Use~\eqref{p=-1} and~\eqref{p=1} for the second term in~\eqref{Idiff}:
\[
  \| R_0(\be^{(1)}) R_0(\be^{(2)}) W \xi^{(2)}\|_X \lec
  \frac{1}{(\be^{(1)})^2} \| r W \xi^{(2)} \|_{L^2} \lec
  \frac{1}{(\be^{(1)})^2} \| r^2 W \|_{L^\infty}  \| \xi^{(2)} \|_{X}
  \lec \frac{\be^{(2)}}{(\be^{(1)})^2}.
\] 
For the first term, we use an $L^2$ estimate for the free resolvent
acting on well-localized functions, proved in Section~\ref{refinement},
\begin{equation} \label{refined2}
  \| R_0(\be) f \|_{L^2} \lec \log^{\frac{1}{2}} \left( \frac{1}{\be} \right) \| r f \|_{L^1}
  + \| f \|_{L^1} \lec \log^{\frac{1}{2}} \left( \frac{1}{\be} \right) 
  \| (1 + r) f \|_{L^1}
\end{equation}
to get
\[
\begin{split}
  \left| \left\langle R_0(\be^{(1)}) W h, \; R_0(\be^{(2)}) W \xi^{(2)} \right\rangle \right|
  &\lec \log^{\frac{1}{2}} \left( \frac{1}{\be^{(1)}} \right)
  \log^{\frac{1}{2}} \left( \frac{1}{\be^{(2)}} \right) \| (1+r)  W h \|_{L^1}
  \| r(1+r) W \|_{L^2} \| \xi^{(2)} \|_X \\
  &\lec \log^{\frac{1}{2}} \left( \frac{1}{\be^{(1)}} \right)
  \log^{\frac{1}{2}} \left( \frac{1}{\be^{(2)}} \right) \be^{(2)}.
\end{split}
\]
Using the last two estimates in~\eqref{Idiff} gives
\[
\begin{split}
  \left\|  \left( I(\be^{(1)}) - I(\be^{(2)}) \right) \F^{(2)} \right \|_X 
  &\lec  \left| \be^{(1)} - \be^{(2)} \right| \be^{(1)} \left(
   \frac{ \log^{\frac{1}{2}} \left( \frac{1}{\be^{(1)}} \right)
  \log^{\frac{1}{2}} \left( \frac{1}{\be^{(2)}} \right) \be^{(2)}}
  {(\be^{(1)})^2 \log \left(\frac{1}{\be^{(1)}}\right) }
  + \frac{\be^{(2)}}{(\be^{(1)})^2} \right) \\
  &\lec  \left| \be^{(1)} - \be^{(2)} \right|  \left(
  \frac{\be^{(2)}  \log \left( \frac{1}{\be^{(2)}} \right)}
  {\be^{(1)}  \log \left( \frac{1}{\be^{(1)}} \right)  } 
  + \frac{\be^{(2)}}{\be^{(1)}} \right)
  \lec  \left| \be^{(1)} - \be^{(2)} \right|
\end{split}
\]
by~\eqref{size2}. Combining this with~\eqref{Fdiff}, we complete
the estimate of $\xi^{(1)} - \xi^{(2)}$ in~\eqref{tri2}:
\[
  \| \xi^{(1)} - \xi^{(2)} \|_X \lec \left| \be^{(1)} - \be^{(2)} \right|
  + \be^{(1)} \| \xi^{(1)} - \xi^{(2)} \|_X
\]
and so for sufficiently large $j$,
\begin{equation}  \label{xidiff}
  \| \xi^{(1)} - \xi^{(2)} \|_X \lec \left| \be^{(1)} - \be^{(2)} \right|.
\end{equation}

It remains to estimate $\be^{(1)} - \be^{(2)}$.
For this we use the orthogonality conditions~\eqref{orth4} re-written as
\[
  \gamma(\be^{(\nu)}) = S^{(\nu)},
  \qquad 
  \gamma(\be) := \be \left( R_0(\be) W h, \; h \right), \;\; \mbox{ and}
\]
\[
  S^{(\nu)} := 
  (k + \be^{(\nu)} \al)
  \left( R_0(\be^{(\nu)})  Wh, \frac{h^2}{r} \right) +
  \frac{1}{\be} \left( R_0(\be^{(\nu)}) W h,
   N_{k,\al,\be^{(\nu)}}(\xi^{(\nu)}) \right).
\]
We need a monotonicity estimate for $\gamma(\be)$,
proved in Section~\ref{freeinners}:
\begin{lem} \label{monotone}
\[
  | \gamma(\be^{(1)}) - \gamma(\be^{(2)}) | \gec
  \log \left( \frac{1}{\be^{(1)}} \right) | \be^{(1)} - \be^{(2)} |
\]
\end{lem}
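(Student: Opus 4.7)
The plan is to reduce to a computation of $\gamma'(\be)$ and then apply the mean value theorem. I will show $\gamma'(\be) = 4 \log(1/\be) + O(1)$ as $\be \to 0$, so that $\gamma(\be^{(1)}) - \gamma(\be^{(2)}) = \gamma'(\be^*)(\be^{(1)} - \be^{(2)})$ for some $\be^*$ between the two arguments. In the uniqueness application one has $\be^{(2)} \lec \be^{(1)}$, so $\be^* \lec \be^{(1)}$ and therefore $|\gamma'(\be^*)| \gec \log(1/\be^{(1)})$, which gives the claimed bound.

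Differentiating $\gamma(\be) = \be \langle Wh, R_0(\be) h\rangle$ and using $\tfrac{d}{d\be} R_0(\be) = -2\be R_0(\be)^2$ gives
\[
  \gamma'(\be) = \langle Wh, R_0(\be) h\rangle - 2\be^2 \langle Wh, R_0(\be)^2 h\rangle.
\]
The first term equals $4\log(1/\be) + O(1)$ directly from \eqref{innercomp}. For the second term, I combine the identity $R_0(\be) Wh = h - \be^2 R_0(\be) h$ (an immediate consequence of $R_0(0) Wh = h$ and the resolvent identity \eqref{resid}) with self-adjointness of $R_0(\be)$, and the variational identity
\[
  \langle h, R_0(\be) h\rangle = \|R_0(\be) h\|_X^2 + \be^2 \|R_0(\be) h\|_{L^2}^2,
\]
obtained by pairing $(-\Delta_r + 1/r^2 + \be^2) R_0(\be) h = h$ with $R_0(\be) h$ and integrating by parts. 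The resulting cancellation is striking:
\[
  \langle Wh, R_0(\be)^2 h\rangle = \langle R_0(\be) Wh, R_0(\be) h\rangle = \langle h, R_0(\be) h\rangle - \be^2 \|R_0(\be) h\|_{L^2}^2 = \|R_0(\be) h\|_X^2.
\]
With \eqref{refined} supplying $\|R_0(\be) h\|_X \lec 1/\be$, this gives $\be^2 \langle Wh, R_0(\be)^2 h\rangle \lec 1$, hence $\gamma'(\be) = 4\log(1/\be) + O(1) \gec \log(1/\be)$ for $\be$ sufficiently small.

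The main point requiring care is the integration by parts behind the variational identity, since $h \notin L^2$ makes the pairing $\langle h, R_0(\be) h\rangle$ only conditionally convergent. One must verify that the boundary contributions vanish: at infinity, $R_0(\be) h$ and $(R_0(\be) h)_r$ decay exponentially for $\be > 0$; at the origin, any function in the natural domain of $-\Delta_r + 1/r^2$ vanishes at least linearly in $r$, so $r (R_0(\be) h)(R_0(\be) h)_r \to 0$ there. Once these observations are in hand, the remainder of the argument is routine.
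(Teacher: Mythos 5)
Your proof is correct and takes a genuinely different route from the paper's. The paper works directly with the difference $\gamma(\be^{(1)}) - \gamma(\be^{(2)})$: it applies the resolvent identity to split off the dominant term $(\be^{(1)}-\be^{(2)})\langle R_0(\be^{(1)})Wh,h\rangle$ (controlled by~\eqref{innercomp}), and then bounds the remaining cross term $\langle R_0(\be^{(1)}) Wh, R_0(\be^{(2)}) h\rangle \lec \frac{1}{\be^{(1)}\be^{(2)}}$ by a direct Fourier--Bessel computation. You instead differentiate $\gamma$, invoke the mean value theorem, and dispose of the $\gamma''$-like term $\be^2\langle Wh, R_0(\be)^2 h\rangle$ by the slick observation that, after the identity $R_0(\be)Wh = h - \be^2 R_0(\be) h$ and the energy pairing for $R_0(\be) h$, it collapses to $\|\p_r R_0(\be)h\|_{L^2}^2 + \|R_0(\be)h/r\|_{L^2}^2 \lec 1/\be^2$ via~\eqref{refined}, with no new Fourier-side work needed. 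What the calculus route buys is that all the heavy lifting is delegated to estimates already in the paper (\eqref{innercomp} and \eqref{refined}); what it costs is the small amount of bookkeeping you correctly flag — justifying differentiation of $\be\mapsto\langle Wh, R_0(\be)h\rangle$ when $h\notin L^2$, and the vanishing of boundary terms in the integration by parts. Both of these are routine (the difference-quotient form of the resolvent identity handles the former; exponential decay at infinity and the $O(r)$ vanishing at the origin handle the latter), so the argument stands. Two cosmetic points: the identity $\langle h, R_0(\be)h\rangle = \|R_0(\be)h\|_X^2 + \be^2\|R_0(\be)h\|_{L^2}^2$ is not exact because $\|\cdot\|_X$ is defined as a sum of norms rather than a Hilbert norm, but you only use it as an upper bound so nothing is lost; and the final step requires $\be^*\lec\be^{(1)}$, which follows from $\be^*\le\max(\be^{(1)},\be^{(2)})$ together with $\be^{(2)}\lec\be^{(1)}$, exactly as you say.
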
.
We also need more difference estimates:
\begin{lem}  \label{morediff}
\[
  |S^{(1)} - S^{(2)}| \lec |\be^{(1)} - \be^{(2)}| + \| \xi^{(1)} - \xi^{(2)} \|_X.
\]
\end{lem}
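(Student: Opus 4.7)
The plan is to split $S^{(1)}-S^{(2)}=A+B$, where
\[
  A := (k+\be^{(1)}\al)\langle R_0(\be^{(1)})Wh,\,h^2/r\rangle - (k+\be^{(2)}\al)\langle R_0(\be^{(2)})Wh,\,h^2/r\rangle
\]
is the difference of the linear inner products and
\[
  B := \tfrac{1}{\be^{(1)}}\langle R_0(\be^{(1)})Wh,\,N^{(1)}\rangle - \tfrac{1}{\be^{(2)}}\langle R_0(\be^{(2)})Wh,\,N^{(2)}\rangle,
\]
with $N^{(\nu)}:=N_{k,\al,\be^{(\nu)}}(\xi^{(\nu)})$. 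All required tools are already in hand: the identity $R_0(\be)Wh=h-\be^2 R_0(\be)h$ from $R_0(0)Wh=h$ and~\eqref{resid}; the inner-product estimate~\eqref{innerbound2}; the decomposition~\eqref{diffsplit} with the pointwise bounds~\eqref{Nbounds}; the free resolvent bounds~\eqref{p=1}--\eqref{p=-1} and~\eqref{refined}; the comparability $\be^{(2)}\lec\be^{(1)}$ from~\eqref{size2}; and the duality $|\langle f,R_0(\be)Wh\rangle|\lec\|R_0(\be)f\|_X$ that follows from~\eqref{embedding} together with $Wh\in L^1$ (already exploited in~\eqref{Ncont}).

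For $A$, first isolate the change in the prefactor $(k+\be^{(\nu)}\al)$ and the change in the inner product itself. Using $R_0(\be)Wh = h - \be^2 R_0(\be)h$, the finiteness of $\langle h,h^2/r\rangle$, and~\eqref{innerbound2} applied to $g=h^2/r\in L^1\cap r^{-3}L^\infty$, one gets $\langle R_0(\be)Wh,h^2/r\rangle = O(1)$ uniformly in small $\be$, yielding a first contribution of size $O(|\be^{(1)}-\be^{(2)}|)$. For the second contribution, the resolvent identity~\eqref{resid} extracts a factor $((\be^{(2)})^2-(\be^{(1)})^2)$, and then analogous bounds on $\langle R_0(\be^{(1)})R_0(\be^{(2)})Wh,h^2/r\rangle$ give another $O(|\be^{(1)}-\be^{(2)}|)$ term. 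Altogether $|A|\lec|\be^{(1)}-\be^{(2)}|$.

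For $B$, decompose into three pieces, varying separately the nonlinearity $N$, the resolvent $R_0$, and the prefactor $1/\be$:
\begin{align*}
  B &= \tfrac{1}{\be^{(1)}}\langle R_0(\be^{(1)})Wh,\, N^{(1)}-N^{(2)}\rangle
     + \tfrac{1}{\be^{(1)}}\langle (R_0(\be^{(1)})-R_0(\be^{(2)}))Wh,\, N^{(2)}\rangle \\
  &\quad + \Bigl(\tfrac{1}{\be^{(1)}}-\tfrac{1}{\be^{(2)}}\Bigr)\langle R_0(\be^{(2)})Wh,\, N^{(2)}\rangle.
\end{align*}
For the first piece, feed~\eqref{diffsplit} through~\eqref{p=1}--\eqref{p=-1} exactly as in the derivation of~\eqref{second}, obtaining $\lec \be^{(1)}\|\xi^{(1)}-\xi^{(2)}\|_X + k\be^{(1)}|\be^{(1)}-\be^{(2)}|$; after dividing by $\be^{(1)}$ and applying the duality bound, this produces exactly the right-hand side of the claim. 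For the third piece, combine $|1/\be^{(1)}-1/\be^{(2)}|=|\be^{(1)}-\be^{(2)}|/(\be^{(1)}\be^{(2)})$ with the bound $|\langle R_0(\be^{(2)})Wh,N^{(2)}\rangle|\lec(\be^{(2)})^2$ from~\eqref{Ncont} and the comparability $\be^{(2)}\lec\be^{(1)}$, giving $\lec|\be^{(1)}-\be^{(2)}|$.

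The main obstacle is the middle piece, where the $1/\be^{(1)}$ prefactor multiplies an inner product of $N^{(2)}$ against $(R_0(\be^{(1)})-R_0(\be^{(2)}))Wh$, and $N^{(2)}$ enjoys no orthogonality to $Wh$. I would resolve this by invoking~\eqref{resid} and transferring one resolvent onto $N^{(2)}$:
\[
  \tfrac{1}{\be^{(1)}}\langle (R_0(\be^{(1)})-R_0(\be^{(2)}))Wh,\, N^{(2)}\rangle
   = \tfrac{(\be^{(2)})^2-(\be^{(1)})^2}{\be^{(1)}}\,\langle R_0(\be^{(1)})N^{(2)},\, R_0(\be^{(2)})Wh\rangle.
\]
The factor $((\be^{(2)})^2-(\be^{(1)})^2)=(\be^{(1)}+\be^{(2)})(\be^{(2)}-\be^{(1)})$ cancels the $1/\be^{(1)}$ loss. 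Applying~\eqref{N1}--\eqref{N3} to $N^{(2)}$ with resolvent parameter $\be^{(1)}$ (valid because $\be^{(2)}\lec\be^{(1)}$ makes the $1/\be^{(1)}$ in~\eqref{p=0} no worse than $1/\be^{(2)}$) gives $\|R_0(\be^{(1)})N^{(2)}\|_X\lec\be^{(2)}$, while $R_0(\be^{(2)})Wh=h-(\be^{(2)})^2R_0(\be^{(2)})h$ is uniformly bounded in $L^\infty$ by~\eqref{refined} and~\eqref{embedding}. This yields a bound $\lec|\be^{(1)}-\be^{(2)}|$ for the middle piece; summing the contributions from $A$ and all three pieces of $B$ gives $|S^{(1)}-S^{(2)}|\lec|\be^{(1)}-\be^{(2)}|+\|\xi^{(1)}-\xi^{(2)}\|_X$, as claimed.
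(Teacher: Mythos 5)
Your overall decomposition is essentially the one the paper uses: split $S^{(1)}-S^{(2)}$ into the linear part $A$ and the nonlinear part $B$, then split $B$ into three pieces by varying $N$, $R_0$, and $1/\be$ separately, using the resolvent identity and the remainder estimates of Section~\ref{remainder}. The pieces $A$, the first piece of $B$ (via~\eqref{second}), and the third piece of $B$ (via~\eqref{Ncont}) are handled as the paper does.

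There is, however, a genuine gap in the middle piece of $B$. After applying~\eqref{resid} you arrive at $\langle R_0(\be^{(1)})N^{(2)},\, R_0(\be^{(2)})Wh\rangle$ and propose to bound it using $\|R_0(\be^{(1)})N^{(2)}\|_X\lec\be^{(2)}$ together with $\|R_0(\be^{(2)})Wh\|_{L^\infty}\lec 1$. But $X\subset L^\infty$ does \emph{not} embed into $L^1$, and $R_0(\be^{(2)})Wh=h-(\be^{(2)})^2R_0(\be^{(2)})h$ is not in $L^1$ (nor in $L^2$, since $h\not\in L^2$); so neither $L^1\times L^\infty$ nor $L^2\times L^2$ applies, and these two bounds do not control the inner product. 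The fix is exactly one more use of self-adjointness: move \emph{both} resolvents onto $N^{(2)}$ and write the inner product as $\langle Wh,\, R_0(\be^{(2)})R_0(\be^{(1)})N^{(2)}\rangle$, which is the form the paper actually bounds. Then $Wh\in L^1$ pairs against $L^\infty$, and $\|R_0(\be^{(2)})R_0(\be^{(1)})N^{(2)}\|_{L^\infty}\lec\|R_0(\be^{(2)})R_0(\be^{(1)})N^{(2)}\|_X\lec \frac{1}{(\be^{(1)})^2}\|R_0(\be^{(2)})N^{(2)}\|_X\lec\frac{(\be^{(2)})^2}{(\be^{(1)})^2}\lec 1$ by~\eqref{p=-1}, the estimates~\eqref{N1}--\eqref{N3}, and~\eqref{size2}, giving the needed $O(|\be^{(1)}-\be^{(2)}|)$ bound for the middle piece.
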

The proof is below.
Applying these two lemmas to the previous relation gives
\[
  |\be^{(1)} - \be^{(2)}| \lec \frac{1}{\log \left( \frac{1}{\be^{(1)}} \right)}
  \left(  |\be^{(1)} - \be^{(2)}| + \| \xi^{(1)} - \xi^{(2)} \|_X \right),
\]
so for $k$ sufficiently small,
\[
  |\be^{(1)} - \be^{(2)}| \lec \frac{1}{\log \left( \frac{1}{\be^{(1)}} \right)}
  \| \xi^{(1)} - \xi^{(2)} \|_X.
\]

Combining this with~\eqref{xidiff}, we see that $\xi^{(1)} = \xi^{(2)}$
and $\be^{(1)} = \be^{(2)}$ for $k$ sufficiently small. 
It follows that $u^{(1)} = u^{(2)}$ for $k$ sufficiently small.
This contradiction proves Theorem~\ref{unique}.
\end{proof}

{\it Proof of Lemma~\ref{morediff}}:
by resolvent identity,~\eqref{diffsplit},~\eqref{p=1},~\eqref{p=0},~\eqref{refined2},
\eqref{p=-1}, the estimates of Section~\ref{remainder} and~\eqref{second},
\[
\begin{split}
  & |S^{(1)} - S^{(2)}| \lec
  |\be^{(1)} - \be^{(2)}|  \left[  \; \left| \left( R_0(\be^{(1)})  Wh, \frac{h^2}{r} \right) \right|
  + \be^{(2)} (\be^{(1)} + \be^{(2)}) \left|  \left( R_0(\be^{(2)}) R_0(\be^{(1)})  Wh, 
  \frac{h^2}{r} \right) \right| \right. \\
  & \quad \left. +
  \frac{\be^{(1)}+\be^{(2)}}{\be^{(1)}}  \left|
  \left( Wh, R_0(\be^{(1)}) R_0(\be^{(2)}) N_{k,\al,\be^{(2)}}(\xi^{(2)}) \right) \right|
  + \frac{1}{\be^{(1)} \be^{(2)}} 
  \left| \left(  Wh, R_0(\be^{(2)}) N_{k,\al,\be^{(2)}}(\xi^{(2)}) \right) \right|
  \right] \\ 
  & \quad \quad + \frac{1}{\be^{(1)}}
   \left| \left(  W h, R_0(\be^{(1)}) N_{k,\al,\be^{(1)}}(\xi^{(1)}) - N_{k,\al,\be^{(2)}}(\xi^{(2)})
   \right) \right| \\
   & \qquad \qquad \quad \lec |\be^{(1)} - \be^{(2)}|  \left[
   1 + \be^{(2)} (\be^{(1)} + \be^{(2)}) \frac{1}{\be^{(2)}} \log \left( \frac{1}{\be^{(1)}}
   \right) 
   + \frac{\be^{(1)}+\be^{(2)}}{\be^{(1)}} \frac{(\be^{(2)})^2}{(\be^{(1)})^2}
   +  \frac{(\be^{(2)})^2}{\be^{(1)} \be^{(2)}} \right] \\
   & \quad \qquad \qquad \qquad \qquad +   
   \frac{1}{\be^{(1)}}
   \left( \be^{(1)}  \| \xi^{(1)} - \xi^{(2)}  \|_X + \be^{(1)} |\be^{(1)} - \be^{(2)}|
   \right) . \\
   & \qquad \qquad \quad \lec |\be^{(1)} - \be^{(2)}| +  \| \xi^{(1)} - \xi^{(2)}  \|_X
\end{split}
\]
as required.
$\Box$

\subsection{Energy asymptotics and $L^p$ estimates} 
\label{sec:energy}

To complete the proof of Theorem~\ref{mainthm},
it remains to prove the energy asymptotics~\eqref{energyThm}. 
By~\eqref{law2}, we may compute for the special case
\[
  k = k(\be) = \be \left( 2 \log \left( \frac{1}{\be} \right) + O(1) \right)
\]
so that by Theorem~\ref{estimates}, the minimizer $v_{k,\al,\be}$ satisfies
\[
  v_{k,\al,\be} = Q + \xi, \quad \xi = \xi_{k,\al,\be}, \quad
  \| \xi  \|_X \lec \be.
\] 
This energy-space bound alone is not sufficient
to estimate the D-M energy $E_{DM}(v_{k,\al,\be})$, so
at the same time, we obtain $L^p$ estimates of $\xi$ by interpolating between 
the energy space estimate, and the 
$L^2$ (and weak $L^2$) information provided by the 
anisotropy/Zeeman energy term $E^{(\al)}(v_{k,\al,\be})$.

Begin with the D-M energy:
\[
\begin{split}
  \left| E_{DM}(Q + \xi) - E_{DM}(Q) \right| &= \left|
  \int_0^\infty \left( \sin^2(Q + \xi) (Q_r + \xi_r) - \sin^2 Q \; Q_r \right) r dr \right| \\
  & = \left|  \int_0^\infty \left( h^2 \xi_r + ((1-2h^2) \sin^2 \xi + 2 h \hat h \cos \xi \sin \xi) 
  (-\frac{h}{r} + \xi_r)  \right) r dr  \right| \\
  &\lec \int_0^\infty \left( (h^2 + \xi^2) |\xi_r| + \frac{h}{r}( \xi^2 + h |\xi| ) \right) r dr \\
  &\lec \left( \| h \|_{L^4}^2 + \| \xi \|_{L^4}^2 \right) \| \xi_r \|_{L^2}
  + \left( \| h \|_{L^4}^2 + \| h \|_{L^\infty} \| \xi \|_{L^\infty} \right) \| \frac{\xi}{r} \|_{L^2} \\
  &\lec  \left( 1 + \| \xi \|_{L^4}^2 + \| \xi \|_X \right) \| \xi \|_X
  \lec (1 + \| \xi \|_{L^4}^2 ) \be,
\end{split}
\]
while by~\eqref{h3/r},
\[
  E_{DM}(Q) = \int_0^\infty \sin^2 Q \; Q_r \; r dr =
  -\int_0^\infty \frac{h^3}{r} \; r dr  =  -2,
\]
so that 
\begin{equation}  \label{DMasy}
  E_{DM}(v_{k,\al,\be}) = E_{DM}(Q + \xi) = -2 + O( (1 + \| \xi \|_{L^4}^2 ) \be ).
\end{equation}

Next, the Zeeman-anisotropy energy:
since $v_{k,\al,\be}$ is a critical point of $E_{k,\al,\be}$, 
the Pohozaev relation~\eqref{poho0} holds,
\begin{equation}  \label{Ani-est}
  \be^2 E^{(\al)} (v_{k,\al,\be}) = -\frac{1}{2} \be k(\be) E_{DM}(Q)
  =  \be k(\be) \left( 1 + O((1 + \| \xi \|_{L^4}^2 ) \be ) \right).
\end{equation}
We can extract $L^2$ information from this:
\[
  \| \sin(Q + \xi) \|_{L^2}^2 = \int_0^\infty \sin^2(Q + \xi) \; r dr = 
  2 E_a(v_{k,\al,\be}) \leq 2 E^{(\al)}(v_{k,\al,\be}) 
  \lec \log \left( \frac{1}{\be} \right) \left( 1 + \be \| \xi \|_{L^4}^2 \right).
\] 
By trig identity,
\[
  \sin \xi + h = \sin(Q + \xi) + h (1-\cos(\xi))  + (1-\hat{h}) \sin(\xi),
\]
and so
\begin{equation}  \label{sin+h}
\begin{split}
  \| \sin \xi + h \|_{L^2} &\lec \| \sin(Q + \xi) \|_{L^2} + \| r h \|_{L^\infty} \| \xi \|_{L^\infty} \| \xi \|_X
  + \| 1 - \hat {h} \|_{L^2} \| \xi \|_{L^\infty} \\
  &\lec \log^{\frac{1}{2}} \left( \frac{1}{\be} \right) \left( 1 + \sqrt{\be} \| \xi \|_{L^4} \right)
  + \be^2 + \be \lec \log^{\frac{1}{2}} \left( \frac{1}{\be} \right) \left( 1 + \sqrt{\be} 
  \| \xi \|_{L^4} \right).
\end{split}
\end{equation}
Since $\| \xi \|_{L^\infty} \lec \be \ll 1$,
\[
  |\xi(r)| \lec | \sin \xi(r) | \leq |\sin \xi(r) + h(r)| + h(r),
\]
and since $h \in L^{2,w}$,
\[
  \| \xi \|_{L^{2,w}} \lec \| \sin \xi + h \|_{L^2} + \| h \|_{L^{2,w}} \lec 
   \log^{\frac{1}{2}} \left( \frac{1}{\be} \right) \left( 1 + \sqrt{\be} \| \xi \|_{L^4} \right) + 1 \lec 
   \log^{\frac{1}{2}} \left( \frac{1}{\be} \right) \left( 1 + \sqrt{\be} \| \xi \|_{L^4} \right).
\]
Simple interpolation with $\| \xi \|_{L^\infty} \lec \be$ yields
\begin{equation}  \label{interp}
  \| \xi \|_{L^p} \leq \left( \frac{p}{p-2} \right)^{\frac{1}{p}}
  \| \xi \|_{L^{2,w}}^{\frac{2}{p}} \| \xi \|_{L^\infty}^{1-\frac{2}{p}} \lec 
  \left( \frac{p}{p-2} \right)^{\frac{1}{p}} \log^{\frac{1}{p}}\left( \frac{1}{\be} \right) 
  \left( 1 + \sqrt{\be} \| \xi \|_{L^4} \right)^{\frac{2}{p}} \be^{1-\frac{2}{p}},
  \qquad p > 2.
\end{equation}
In particular, the case $p=4$
\[
  \| \xi \|_{L^4} \lec \log^{\frac{1}{4}} \left( \frac{1}{\be} \right)
  \left(1 + \sqrt{\be} \| \xi \|_{L^4} \right)^{\frac{1}{2}} \be^{\frac{1}{2}}
  \lec \log^{\frac{1}{4}} \left( \frac{1}{\be} \right) \be^{\frac{1}{2}} +
  \log^{\frac{1}{4}} \left( \frac{1}{\be} \right) \be^{\frac{3}{4}} \| \xi \|_{L^4}^{\frac{1}{2}}
\]
shows that
\begin{equation}  \label{L4}
  \| \xi \|_{L^4} \lec \log^{\frac{1}{4}} \left( \frac{1}{\be} \right) \be^{\frac{1}{2}}.
\end{equation}
With this, we can return to~\eqref{interp} to get
\[
  \| \xi \|_{L^p} \lec  \left( \frac{p}{p-2} \right)^{\frac{1}{p}} 
  \log^{\frac{1}{p}}\left( \frac{1}{\be} \right) \be^{1-\frac{2}{p}}, \qquad p > 2,
\]
and~\eqref{sin+h} to get
\[
   \| \sin \xi + h \|_{L^2} \lec \log^{\frac{1}{2}} \left( \frac{1}{\be} \right).
\]
Then
\[
  \| \xi - \sin \xi \|_{L^2} \lec \| \xi^3 \|_{L^2} = \| \xi \|_{L^6}^3
  \lec \log^{\frac{1}{2}} \left( \frac{1}{\be} \right) \be^2,
\]
so
\[
  \| \xi + h \|_{L^2}
  \leq \| \sin \xi + h \|_{L^2} + \| \xi - \sin \xi \|_{L^2} \lec 
  \log^{\frac{1}{2}} \left( \frac{1}{\be} \right) (1 + \be^2) \lec 
  \log^{\frac{1}{2}} \left( \frac{1}{\be} \right).
\]

Finally, we can return to the energy computation.
For the exchange energy: since $Q$ minimizes $E_e$
(among finite-energy configurations with the given boundary conditions),
\[
\begin{split}
  0 \leq E_e(Q+\xi) - E_e(Q) &= E_e(Q+\xi) - E_e(Q) - \langle E_e'(Q), \xi \rangle \\
  &= \frac{1}{2} \int_0^\infty \left(
  \xi_r^2 + \frac{1}{r^2} (\sin^2(Q + \xi) - \sin^2 Q - 2 \sin Q \cos Q \; \xi ) \right) r dr  \\
  &= \frac{1}{2} \int_0^\infty \left( \xi_r^2 + \frac{1}{r^2} (
  (1-2 h^2 ) \sin^2 \xi + 2 h \hat{h} (\cos \xi \sin \xi - \xi) ) \right) r dr \\
  & \lec \int_0^\infty \left( \xi_r^2 + \frac{1}{r^2} ( \xi^2 + \xi^3 ) \right) r dr
  \lec \| \xi \|_X^2(1 + \| \xi \|_{L^\infty}) \lec \|\xi \|_X^2(1 + \|\xi\|_X) \lec \be^2,
\end{split}
\] 
so that
\begin{equation}  \label{exasy}
  E_e(v_{k,\al,\be}) = E_e(Q+\xi) = E_e(Q) + O(\be^2) = 2 + O(\be^2).
\end{equation}
Combining~\eqref{exasy}, \eqref{DMasy}, \eqref{Ani-est}
and~\eqref{L4}: 
\[
\begin{split}
  E_e(Q) - E_{k,\al,\be}(v_{k,\al,\be}) &= 
  -E_e(v_{k,\al,\be}) + 2 - \be k(\be) E_{DM}(v_{k,\al,\be}) 
  - \be^2 E^{(\al)}(v_{k,\al,\be}) \\
  &= -E_e(v_{k,\al,\be}) + 2 - \frac{1}{2} \be k(\be) E_{DM}(v_{k,\al,\be})
  = O(\be^2) + \be k(\be) \left( 1 + O(\be) \right) \\
  &= \be k(\be) + O(\be^2)
  = 2 \be^2 \log \left( \frac{1}{\be} \right) + O(\be^2)
\end{split}
\]
which implies~\eqref{energyThm}.

This completes the proof of Theorem~\ref{mainthm}. 
$\Box$

\section{Appendices}

\subsection{Symmetries}
\label{symm}

Here we Prove Lemma~\ref{invar}:
\begin{proof}
For~\eqref{symmetry}, just compute:
\[
  \nabla \times \left( \hat{m}(e^{\phi \tR} x) \right) =
  \left[ \begin{array}{c} \p_2 \; m_3(e^{\phi \tR} x) \\ -\p_1 \; m_3(e^{\phi \tR} x)
  \\ \p_1 m_2(e^{\phi \tR} x) - \p_2 m_1(e^{\phi \tR} x) 
  \end{array} \right] = \left[ \begin{array}{c}
  -\sin(\phi) (m_3)_1 + \cos(\phi) (m_3)_2 \\
  -\cos(\phi) (m_3)_1 - \sin(\phi) (m_3)_2 \\
  \cos(\phi) (m_2)_1 + \sin(\phi)(m_2)_2 + \sin(\phi) (m_1)_1 - \cos(\phi)(m_1)_2 
  \end{array} \right]
\]
and so 
\[
\begin{split}
  \hat{m}(e^{\phi \tR} x) \cdot  \nabla \times \left( \hat{m}(e^{\phi \tR} x) \right) &=
  \sin(\phi) \left( - m_1(m_3)_1 - m_2 (m_3)_2 + m_3(m_2)_2 + m_3(m_1)_1
   \right) \\ & + \cos(\phi) \left( m_1(m_3)_2 - m_2(m_3)_1 
   + m_3(m_2)_1 - m_3(m_1)_2 \right) 
\end{split}
\]
Since the change of variables $x \mapsto e^{\phi \tR} x$
has unit Jacobian, after integration we have
\begin{equation} \label{first2}
\begin{split}
  \E_{DM}\left( \hat{m}(e^{\phi \tR} \cdot ) \right) &=
  \sin(\phi) \int \left( - m_1(m_3)_1 - m_2 (m_3)_2 + m_3(m_2)_2 + m_3(m_1)_1
   \right) \; dx\\ & + \cos(\phi) \int \left( m_1(m_3)_2 - m_2(m_3)_1 
   + m_3(m_2)_1 - m_3(m_1)_2 \right) \; dx
  \end{split}
\end{equation}
On the other hand, compute 
\[
  \nabla \times \left( e^{\phi R} \hat{m} \right) =
  \left[ \begin{array}{c} \p_2 m_3 \\ -\p_1 m_3 \\
  \p_1 \left( \sin(\phi) m_1 + \cos(\phi) m_2 \right) -
  \p_2 \left( \cos(\phi) m_1 - \sin(\phi) m_2 \right) \end{array} \right] 
\]
so
\[
\begin{split}
  \left( e^{\phi R} \hat{m} \right) \cdot \nabla \times 
  \left( e^{\phi R} \hat{m} \right)     
  &= \left( \cos(\phi) m_1 - \sin(\phi) m_2 \right) (m_3)_2 -
  \left( \sin(\phi) m_1 + \cos(\phi) m_2 \right) (m_3)_1 \\
  & + m_3 \left( \sin(\phi)((m_1)_1 + (m_2)_2) + \cos(\phi)
  ((m_2)_1 - (m_1)_2) \right)
\end{split}
\]
and so after integration,
\begin{equation} \label{second2}
\begin{split}
  \E_{DM} \left( e^{\phi R} \hat{m} \right) &=
  \sin(\phi) \int \left(
  -m_2(m_3)_2 - m_1 (m_3)_1 + m_3 (m_1)_1 + m_3 (m_2)_2 \right) \; dx \\
  &+ \cos(\phi) \int \left( m_1(m_3)_2 - m_2 (m_3)_1 +
  m_3(m_2)_1 - m_3(m_1)_2 \right) \; dx  \\
  &= \E_{DM} \left( \hat{m}(e^{\phi \tR} \cdot ) \right)
\end{split}
\end{equation}
using~\eqref{first2} in the last step. Now replacing 
$\hat{m}$ with $e^{-\phi R} \hat{m}$ in~\eqref{second2}
yields~\eqref{symmetry}. 

For~\eqref{symmetry2}, it is easily checked that
\[
  \left( F \hat{m}  \cdot \nabla \times (F \hat{m})  \right)(x)
  = \left(\hat{m} \cdot \nabla \times \hat{m} \right)(x_1,-x_2),
\] 
and then~\eqref{symmetry2} follows from the fact that
$(x_1,x_2) \mapsto (x_1,-x_2)$ has unit Jacobian.
\end{proof}

\subsection{Differentiability of the energy}   \label{functional}

Here we prove Proposition~\ref{differentiability}.
\begin{proof}
Let $u \in X_n \cap L^2$ and $\xi \in X \cap L^2$.
Using simple trig identities and the elementary bounds
$|1 - \cos \xi| + |\xi - \sin \xi| \lec \xi^2$, we find:
\[
  \left| E_e(u + \xi) - E_e(u) - \int_0^\infty \left( u_r \xi_r  + \frac{1}{r^2} \sin u \cos u 
  \right) r dr \right| \lec \| \xi \|_X^2;
\]
\[
  \left| E_{DM}(u + \xi) - E_{DM}(u) + \int_0^\infty \frac{1}{r} \sin^2 u \; \xi \;  r dr 
  -\int_0^\infty \left( \sin^2 u \; \xi \; r \right)_r dr \right| 
  \lec \| u_r \|_{L^2} \| \xi \|_{L^2} \| \xi \|_{L^\infty} + \| \xi_r \|_{L^2} \| \xi \|_{L^2}
\]
and so since $u \in X_n \cap L^2$, $\xi \in X \cap L^2$ implies
$\sin^2 u \; \xi \; r \to 0$ as $r \to 0$ and $r \to \infty$ (using~\eqref{embedding2}), 
\[
  \left| E_{DM}(u + \xi) - E_{DM}(u) + \int_0^\infty \frac{1}{r} \sin^2 u \; \xi \;  r dr 
  \right|  \lec \| \xi \|_{X \cap L^2}^2;
\]
\[
  \left| E_z(u + \xi) - E_z(u) - \int_0^\infty \sin u \; \xi \; r dr \right| \lec \| \xi \|_{L^2}^2;
\]
\[
  \left| E_a(u + \xi) - E_a(u) - \int_0^\infty \sin u \cos u \; \xi \; r dr \right| \lec \| \xi \|_{L^2}^2.
\]
Combining these shows that $E_{k,\al,\be}$ is differentiable on $X_n \cap L^2$ with
Fr\'echet derivative at $u$ given by
\[
  X \cap L^2 \ni \xi \mapsto \int_0^\infty \left(
  u_r \xi_r + \left[ \frac{1}{r^2} \sin u \cos u - k \be \frac{1}{r} \sin^2 u  +
  \be^2 \left( \al \sin u \cos u + (1-\al) \sin u \right)
  \right] \xi \right) r dr.
\]
If $v$ minimizes $E_{k,\al,\be}$ on $X_n \cap L^2$, this derivative
vanishes for all $\xi \in X \cap L^2$. In particular, taking 
$\xi \in C_0^\infty((0,\infty))$, we see that $v$ satisfies the Euler-Lagrange 
equation~\eqref{E-L} in the sense of distributions.
It follows that $v_{rr} \in L^2_{loc}((0,\infty))$, so in fact 
$v \in H^2_{loc}$. Continuing in this way, $v \in H^k_{loc}$ for all $k$,
and so $v \in C^\infty((0,\infty))$ satisfies~\eqref{E-L} in the classical sense.

The exponential decay of $v$ is standard. Since $v \in X_n \cap L^2$, it
has some decay by~\eqref{embedding2}. The linear approximation to
the Euler-Lagrange equation~\eqref{E-L} around $v=0$ is
\[
  0 \approx -v_{rr} - \frac{v_r}{r} + \frac{v}{r^2} + \be^2 v 
\]
whose decaying fundamental solution is the modified Bessel
function $K_1(\be r)$, which  decays like $\frac{e^{-\be r}}{\sqrt{\be r}}$
as $r \to \infty$. See \cite{li2018stability} for details.

To obtain the Pohozaev-type identity~\eqref{poho0}, multiply
the Euler-Lagrange equation~\eqref{E-L} by $r v_r$
and integrate over an interval $[s \; R]$ with 
$0 < s < R < \infty$ to obtain
\begin{equation} \label{prepoho1}
  k \be \int_s^R e_{DM}(v) r dr + 2 \be^2 \int_s^R e^{(\al)}(v) r dr
  = \left[  -\frac{1}{2} (r v_r)^2 + \frac{1}{2} 
  \sin^2 v + \be^2 r^2 e^{(\al)}(v) \right] \Big |^R_s.
\end{equation}
Since $u \in X_n$, we have $\lim\limits_{r \to 0} v(r) = n \pi$ and so
$\lim\limits_{r \to 0} \sin v = \lim\limits_{r \to 0} r^2 e^{(\al)}(v) = 0$.
Since $v \in X_n \cap L^2$, $v_r \in L^2$, and by~\eqref{E-L} 
and the exponential decay,  $(r v_r)_r \in L^2$. 
So $r v_r \in X$ and in particular $\lim\limits_{r \to 0} r v_r = 0$.
Taking $s \to 0$ in~\eqref{prepoho1} gives
\begin{equation} \label{prepoho2}
  k \be \int_0^R e_{DM}(v) r dr + 2 \be^2 \int_0^R e^{(\al)}(v) r dr =
  -\frac{1}{2} R v_r^2(R) + \frac{1}{2} \sin^2 v(R) + \be^2 R^2 e^{(\al)}(v)(R).
\end{equation}
Taking $R \to \infty$ and using the exponential decay 
gives the desired relation~\eqref{poho0}.

Finally, we verify directly that~\eqref{mfromv} satisfies~\eqref{E-L1}. Let 
\[
 \hat e(r,\th) := [ -\cos(v(r)) \sin(\theta), \cos(v(r)) \cos(\theta), -\sin(v(r)) ].
\]
We notice that $|\hat e| = 1$,  and $\hat e \perp \hat m$, so 
$\hat e(r,\th) \in T_{\hat m(r,\th)} \S^2$. Compute
\[
  \hat{m}_r = v_r \hat{e}, \quad
  \hat{m}_{rr} = v_{rr} \hat{e} - v_r^2 \hat {m}, \quad
  \hat{m}_{\theta\theta}  = -\sin^2(v) \hat{m} - \sin(v) \cos(v) \hat{e},
\]
and so
\[
  P_{T_{\hat m} \S^2} \Delta\hat{m} = 
  P_{T_{\hat m} \S^2} \left( \hat{m}_{rr} + \frac{1}{r} \hat{m}_r + 
  \frac{1}{r^2} \hat{m}_{\theta\theta} \right)
  = \left( v_{rr}  + \frac{1}{r} v_r  - \frac{\sin(v) \cos(v)}{r^2}  \right) \hat{e}.
\]
Compute
\[
  \nabla \times \hat{m} = 
  [ -v_r \sin(v) \sin(\theta), v_r \sin(v) \cos(\theta), v_r \cos(v) + \frac{1}{r} \sin(v) ]
  = v_r \hat m + \frac{1}{r} \sin (v) \hat k,
\]
so
\[
   P_{T_{\hat m} \S^2} (\nabla \times \hat{m}) = 
   \frac{1}{r} \sin(v) P_{T_{\hat m} \S^2} \hat k,
\]
and
\[
   P_{T_{\hat m} \S^2} \hat k = \hat k - m_3 \hat m =
   -\sin(v) \hat e.
\]
Then by~\eqref{E-L1}
\[
\begin{split}
  \E'(\hat{m}) &= P_{T_{\hat m} \S^2} 
  \left[ -\Delta \hat m + \be \; k \; \nabla \times \hat m + 
  \be^2(\al -1 - \al m_3) \hat k \right] \\ &= \left(
  -v_{rr} - \frac{1}{r} v_r + \frac{\sin(v) \cos(v)}{r^2} 
  - \be \; k \; \frac{1}{r} \sin^2(v) 
  + \be^2 (1-\al + \al \cos(v)) \sin(v) \right) \hat e.
\end{split}
\]
So the full Euler-Lagrange equation~\eqref{E-L1} is satsified, 
since the reduced one~\eqref{E-L} is. 
\end{proof}

\subsection{Unboundedness from below of the energy}
\label{unbounded}

Here we prove Proposition~\ref{unbound}.
\begin{proof}
Without loss of generality, we may take $n \geq 0$.
First suppose $k > 0$.
Consider, for $M \gg t \gg 1$, $M$ an odd integer, 
a piecewise linear test function of the form
\[
  u(r) = \left\{ \begin{array}{cl} n \pi + t r & 0 \leq r \leq \frac{M-n}{t} \pi \\
  (M + \frac{M-n}{t}) \pi - r & \frac{M-n}{t} \pi \leq r \leq (\frac{M-n}{t} + M) \pi \\
  0 & r \geq (\frac{M-n}{t} + M) \pi
  \end{array} \right\} \in X_n.
\]
Begin with the exchange energy:
\[
\begin{split}
  \frac{1}{2} \int_0^\infty u_r^2 \; r dr &= 
  \frac{1}{2} t^2 \frac{1}{2} \frac{(M-n)^2}{t^2} \pi^2 + 
  \frac{1}{2} \frac{1}{2} \pi^2 \left( ( \frac{M-n}{t} + M )^2 - \frac{(M-n)^2}{t^2} \right) \\
  &= \frac{M^2 \pi^2}{4} \left( (1- n/M)^2 + 1 + \frac{2}{t}(1 - n/M) \right)
  = \frac{M^2 \pi^2}{2} \left( 1 + O(\frac{1}{t}) \right).
\end{split}
\]
The other part of the exchange is (mainly) bounded by the anisotropy:
using $\sin^2(n\pi + t r) = \sin^2(t  r) \leq \min ( t^2 r^2, \; 1 )$,  
\[
\begin{split} 
  \frac{1}{2} \int_0^\infty \frac{\sin^2(u)}{r^2} \; r dr &\leq
  \frac{1}{2} \int_0^{1/t} t^2 \; r dr + \frac{1}{2} \int_{1/t}^{M/t} \frac{dr}{r} +
  \frac{1}{2} \frac{t^2}{M^2} \int_{M/t}^\infty \sin^2(u) \; r dr \\
  & = O \left( 1 + \log M  + \frac{t^2}{M^2} E_a(u)  \right),
\end{split}
\]
and combining these gives
\[
  E_e(u) = \frac{M^2 \pi^2}{2} \left( 1 + O( \frac{1}{t}
  + \frac{t^2}{M^4} E_a(u) ) \right).
\]
Next the anisotropy: setting $S(y) = y/2 - \sin(2y)/4$, 
$\hat S(y) = y^2/4 - y \sin(2y)/4 + \sin^2(y)/4$, so that 
$S'= \sin^2(y)$ and $\hat S'= y \sin^2(y)$,
\[
\begin{split}
  E_a(u) &= \frac{1}{2} \int_0^\infty \sin^2(u(r)) \; r dr =
  \frac{1}{2} \int_0^{\frac{M-n}{t} \pi} \sin^2(tr) \; r dr +
  \frac{1}{2} \int_{\frac{M-n}{t} \pi}^{(\frac{M-n}{t} + M) \pi}
  \sin^2(r-\frac{M-n}{t}\pi) \; r dr \\
  &= \frac{1}{2 t^2}\hat S((M-n)\pi) + \frac{1}{2}\hat S(M \pi) + 
  \frac{(M-n)\pi}{2t} S(M \pi)
  = \frac{M^2 \pi^2}{8} \left( 1 + O(\frac{1}{t}) \right).
\end{split}
\]
The Zeeman term is similar: set 
$\tilde{S}_{\pm}(y) = y^2/2 \pm y\sin(y) \pm \cos(y) \mp 1$ 
so that $\tilde{S}'_{\pm} = y(1 \pm \cos(y))$,
\[
\begin{split}
  E_z(u) &= \int_0^\infty (1-\cos(u(r))) \; r dr =
  \int_0^{\frac{M-n}{t} \pi} (1 \pm \cos(tr)) \; r dr +
  \int_{\frac{M-n}{t} \pi}^{(\frac{M-n}{t} + M) \pi}
  (1 + \cos(r-\frac{M-n}{t}\pi) ) \; r dr \\
  &= \frac{1}{t^2} \tilde{S}_{\pm}((M-n)\pi) +\tilde{S}_+(M \pi) + 
  \frac{(M-n)\pi}{t} M \pi
  = \frac{M^2 \pi^2}{2} \left( 1 + O(\frac{1}{t}) \right).
\end{split}
\]
Finally the D-M term (which is the source of negative energy):
\[
\begin{split}
  E_{DM}(u) &= \int_0^\infty u_r \sin^2(u) \; r dr =
  2 t E_{a}^{[0, (M-n)\pi/t]} - 2 E_{a}^{[(M-n)\pi/t, (M + (M-n)/t)\pi]} \\
  &= -\frac{M^2 \pi^2}{4} \left( 1 + O(\frac{1}{t}) \right).
\end{split}
\]
So in total
\[
  E_{k,\al,1}(u) = \frac{M^2 \pi^2}{4} \left( 2 - k + \frac{\al}{2}  + 2(1-\al)
  + O( \frac{1}{t} + \frac{t^2}{M^2} ) \right) 
  =  \frac{M^2 \pi^2}{4} \left( 4 - \frac{3\al}{2} - k
  + O( \frac{1}{t} + \frac{t^2}{M^2} ) \right) 
\]
which $\rightarrow -\infty$  as $M \gg t \to \infty$, 
provided $k > 4 - \frac{3\al}{2}$. 

The case $k < -(4 - \frac{3\al}{2})$ is handled by a similar test function
with the slopes reversed:
\[
  u(r) = \left\{ \begin{array}{cl} n\pi - t r & 0 \leq r \leq \frac{M+n}{t} \pi \\
  -(M + \frac{M+n}{t}) \pi + r & \frac{M+n}{t} \pi \leq r \leq (\frac{M+n}{t} + M) \pi \\
  0 & r \geq (\frac{M+n}{t} + M) \pi
  \end{array} \right\} \in X_n.
\]
We omit the details.
\end{proof}

\subsection{Existence of a minimizer}   \label{existence}

Here we prove Theorem~\ref{minexist}.

\begin{proof}
By~\eqref{lb} and~\eqref{nobub}, 
\begin{equation} \label{I1}
  e_{k,\al,1}^{(1)} \in [2(1-k^2), 2) \subset (0, 2).
\end{equation} 
Let $\{ u_j \}_{j=1}^\infty \subset X_1$
be a minimizing sequence: $E_{k,\al,1}(u_j) \to e_{k,\al,1}$.   
Since $|k| < 1$, by~\eqref{basiclower} we have uniform bounds:
$E_e(u_j) \lec 1, \; E^{(\al)}(u_j) \lec 1$. We then also have a uniform 
pointwise bound, $\| u_j \|_\infty \lec 1$ by~\eqref{uniupper}.

A useful observation is that the co-rotational symmetry allows for easy 
uniform control of the $r \to \infty$ tail of minimizing sequences
(c.f. the classical Strauss lemma~\cite{strauss1977existence} giving 
compact embedding of $H^1$ into a Lebesgue space for radial
functions):
\begin{lem}
For $u \in X_n$,
\begin{equation}  \label{Strauss}
  1 - \cos(u(r)) + \frac{1}{2} \sin^2(u(r)) \leq \frac{4}{r} \sqrt{E^{(\al)}(u)} 
   \sqrt{E_e(u)}.
\end{equation}
\end{lem}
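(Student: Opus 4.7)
The plan is to view the left-hand side as (minus) a total derivative in $r$, integrate from $r$ to $\infty$, bound $r$ times the result using $r\le s$ inside the integral, and then close with Cauchy--Schwarz. Concretely, let
\[
  g(r) := 1 - \cos u(r) + \tfrac{1}{2}\sin^2 u(r) \;\geq\; 0.
\]
A direct computation gives
\[
  g'(r) = \sin(u)\,u_r\,(1+\cos u), \qquad |g'(r)| \leq 2\,|\sin u|\,|u_r|.
\]

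Since $u \in X_n$ has finite exchange energy, property~\eqref{cont} combined with the definition of $X_n$ forces $u(\infty)=0$, hence $g(\infty)=0$. Therefore
\[
  g(r) = -\int_r^\infty g'(s)\,ds \;\leq\; 2\int_r^\infty |\sin u(s)|\,|u_r(s)|\,ds.
\]
Multiplying by $r$ and using $r \leq s$ throughout the integration region,
\[
  r\,g(r) \;\leq\; 2\int_r^\infty |\sin u(s)|\,|u_r(s)|\,s\,ds.
\]

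Now I would apply Cauchy--Schwarz (with the radial measure $s\,ds$):
\[
  \int_r^\infty |\sin u|\,|u_r|\,s\,ds
  \;\leq\; \Bigl(\int_0^\infty \sin^2 u \,s\,ds\Bigr)^{1/2}
          \Bigl(\int_0^\infty u_r^2\,s\,ds\Bigr)^{1/2}
  \;\leq\; \sqrt{2E_a(u)}\cdot \sqrt{2E_e(u)}.
\]
Finally, by~\eqref{alpha} we have $E_a(u) \leq E^{(\al)}(u)$, which yields
\[
  r\,g(r) \;\leq\; 4\sqrt{E^{(\al)}(u)}\sqrt{E_e(u)},
\]
i.e.\ the desired bound~\eqref{Strauss}. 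There is no real obstacle here; the only point worth checking carefully is the boundary behavior $g(\infty)=0$, which is exactly the content of \eqref{cont} together with the definition of $X_n$. The crucial structural feature being used is that all three of $1-\cos u$, $\tfrac12\sin^2 u$, and $(1+\cos u)$ are controlled by the ``natural'' pairing $|\sin u|\cdot|u_r|$, and that pairing is precisely what interpolates $E_a$ against $E_e$ via Cauchy--Schwarz, giving the $1/r$ decay characteristic of radial Strauss-type estimates.
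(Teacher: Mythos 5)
Your proof is correct and follows essentially the same route as the paper: fundamental theorem of calculus from $r$ to $\infty$ (using $u(\infty)=0$), insert the factor $s/s$ and pull out $1/r$, then Cauchy--Schwarz to relate $\|\sin u\|_{L^2}\|u_r\|_{L^2}$ to $\sqrt{E_a}\sqrt{E_e}\le\sqrt{E^{(\al)}}\sqrt{E_e}$. (Incidentally, your derivative $g'=\sin u\,(1+\cos u)\,u_r$ is the correct one; the paper's display has a sign slip writing $(1-\cos u)$, but this has no effect since both factors are bounded by $2$.)
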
   
\begin{proof}
Since $1-\cos(u(r)) + \frac{1}{2} \sin^2(u(r)) \to 0$ as $r \to \infty$, 
by the fundamental theorem of calculus, and Cauchy-Schwarz,
\[
\begin{split}
  1 - \cos(u(r)) + \frac{1}{2} \sin^2(u(r)) &= 
  \int_r^\infty \frac{1}{s} \sin(u(s)) (1 - \cos(u(s)) u_r(s) \; s ds \\
  &\leq \frac{2}{r} \| \sin(u) \|_{L^2_{r dr}} \| u_r \|_{L^2_{r dr}}
  \leq \frac{4}{r} \sqrt{E_a(u)} \sqrt{E_e(u)}
  \leq \frac{4}{r} \sqrt{E^{(\al)}(u)} \sqrt{E_e(u)}
\end{split}
\]
where the last inequality is from~\eqref{alpha}.
\end{proof}

It follows from~\eqref{Strauss} (and continuity) that there is $R$ such that
$|u_j(r)| \leq \pi/2$ for all $r \geq R$ and $j$. Then since 
$|u| \lec |\sin(u)|$ when $|u| \leq \pi/2$,
\begin{equation} \label{L2}
  \| u_j \|_{L^2}^2 = 
  \int_0^\infty u_j^2(r) \; rdr = \int_0^R u_j^2(r) \; r dr + \int_R^\infty u_j^2(r) \; r dr
  \lec R^2 \| u_j \|_\infty^2 + E^{(\al)}(u_j) \lec 1.
\end{equation}
From here we also get uniform decay as $r \to \infty$, just as in~\eqref{Strauss}:
\begin{equation} \label{smalltail}
  u_j^2(r) = -2 \int_r^\infty \frac{1}{s} u_j(s) (u_j)_r(s) \;s ds 
  \leq \frac{2}{r} \| u_j \|_2 \|(u_j)_r \|_2 \lec \frac{1}{r}.
\end{equation} 
Combining~\eqref{L2} with $E_e(u) \lec 1$, shows $ \| u_j \|_{H^1_{r dr}} \lec 1$, 
and so by the standard arguments (theorems of Alaoglu and 
Rellich-Kondrachov) 
there is a subsequence (which we still denote $\{ u_j \}$) such that    
\begin{equation} \label{converge}
  \exists \; H^1 \ni v \leftarrow u_j \quad
  \mbox{ weakly in } H^1_{r dr}; \mbox{ strongly in } 
  L^p([0,R))_{r dr} \; \forall \; R, \; 1 \leq p < \infty; \mbox{ and a.e.} 
\end{equation}
Moreover, since $E_e(u_j) \lec 1$, we have $\| \sin(u_j)/r \|_{L^2_{r dr}} \lec 1$
and we may assume (passing to a further subsequence if needed) that
\begin{equation} \label{converge2}
  \frac{\sin(u_j)}{r} \to \frac{\sin(v)}{r} \mbox{ weakly in } L^2_{r dr}.
\end{equation}
Additionally, we have uniform convergence away from the origin. This is because
on compact intervals $I \subset (0,\infty)$, $u_j$ are uniformly bounded 
in $H^1(I)_{dr}$ and so the one-dimensional Sobolev embedding gives
compactness in $L^\infty(I)$. Combining this observation with the small
tail estimate~\eqref{smalltail}:
\begin{equation} \label{uniform} 
  \forall \; R > 0, \quad \lim_{j \to \infty} \; \left( \sup_{r \geq R} |u_j(r) - v(r)| \right) = 0.
\end{equation}

As usual, we have weak lower semicontinuity of 
$\| u_r \|_{L^2}^2$, and so (by Fatou's Lemma) of $E_e$
and $E^{(\al)}$:
\begin{equation} \label{lsc}
  E_e(v) \leq \liminf E_e(u_j), \qquad
  E^{(\al)}(v) \leq \liminf E^{(\al)} (u_j).
\end{equation}
We have full convergence of the DM term, because of the extra
$\sin(u)$ factor, and the uniform decay~\eqref{Strauss}: 
\begin{lem}
\begin{equation} \label{strong}
  E_{DM}(v) = \lim_{j \to \infty} E_{DM}(u_j). 
\end{equation}
\end{lem}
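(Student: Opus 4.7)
The plan is to write
\[
  E_{DM}(u_j) - E_{DM}(v) = \int_0^\infty \sin^2(v) \bigl((u_j)_r - v_r\bigr) r\, dr + \int_0^\infty \bigl(\sin^2(u_j) - \sin^2(v)\bigr)(u_j)_r \, r dr
\]
and treat the two pieces by pairing, respectively, (i) the fixed $L^2_{r dr}$ function $\sin^2(v)$ with the derivatives $(u_j)_r \to v_r$ weakly in $L^2_{r dr}$ from~\eqref{converge}, and (ii) the uniformly $L^2_{r dr}$-bounded derivatives $(u_j)_r$ with $\sin^2(u_j) \to \sin^2(v)$ strongly in $L^2_{r dr}$. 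The first piece vanishes as soon as $\sin^2(v) \in L^2_{r dr}$, which is immediate from $\sin^4(v) \leq \sin^2(v)$ and $E_a(v) \leq E^{(\al)}(v) \lec 1$ via~\eqref{lsc}. The second piece is bounded by $\|\sin^2(u_j) - \sin^2(v)\|_{L^2_{r dr}} \|(u_j)_r\|_{L^2_{r dr}}$, so convergence reduces entirely to establishing the strong $L^2_{r dr}$ convergence claim for $\sin^2(u_j)$.

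To prove $\sin^2(u_j) \to \sin^2(v)$ strongly in $L^2_{r dr}$, I would split at a radius $R$ chosen large enough that the Strauss-type bound~\eqref{Strauss} forces $|u_j(r)|, |v(r)| \leq \pi/2$ for all $r \geq R$ and all $j$. On $[0,R]$, since $|\sin^2(u_j) - \sin^2(v)| \leq 2$, the measure $r\,dr$ is finite there, and $u_j \to v$ almost everywhere by~\eqref{converge}, ordinary dominated convergence yields $\int_0^R |\sin^2(u_j) - \sin^2(v)|^2 r\, dr \to 0$. On the tail $[R,\infty)$, I would use the interpolation
\[
  \int_R^\infty |\sin^2(u_j) - \sin^2(v)|^2 r\, dr \leq \|\sin^2(u_j) - \sin^2(v)\|_{L^\infty([R,\infty))} \int_R^\infty \bigl(\sin^2(u_j) + \sin^2(v)\bigr) r\, dr,
\]
whose second factor is $\lec E_a(u_j) + E_a(v) \lec 1$ uniformly, and whose first factor tends to zero by the uniform convergence~\eqref{uniform}.

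The one real obstacle is the non-finiteness of $r\,dr$ on $[R,\infty)$, which blocks any naive dominated convergence argument on the whole half-line: the obvious pointwise bound $\sin^2 \leq 1$ is not in $L^2_{r dr}$. The Strauss-type decay~\eqref{Strauss} and the uniform $r \to \infty$ convergence~\eqref{uniform}, both already established as consequences of the co-rotational symmetry and the $H^1_{r dr}$-bounds on $u_j$, are precisely what compensate for this infinite-measure difficulty. Apart from this tail split, each step is a short, routine application of weak/strong pairing or dominated convergence.
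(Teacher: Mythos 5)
Your argument is correct, and it takes a genuinely different route from the paper's. The paper first isolates the tail at the level of the original $E_{DM}$ integrals, using \eqref{Strauss} and H\"older to bound $\int_R^\infty \sin^2(u)|u_r|\, r\, dr \lec R^{-1/2}\sqrt{E^{(\al)}(u)}\sqrt{E_e(u)}$, chooses $R$ so both tails (for $u_j$ and $v$) are $<\e$, and then runs a two-piece weak/strong argument on the compact interval $[0,R]$. You instead split globally over $(0,\infty)$, dispose of the $\sin^2(v)\bigl((u_j)_r - v_r\bigr)$ piece by pairing a fixed $L^2_{rdr}$ function against a weakly null sequence (this requires the observation $\sin^2(v)\in L^2_{rdr}$, which you correctly extract from $E_a(v)\lec 1$ via \eqref{lsc}), and reduce the remaining piece to the separate claim that $\sin^2(u_j)\to\sin^2(v)$ strongly in $L^2_{rdr}$. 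You then prove that claim by a compact/tail split, with dominated convergence on $[0,R]$ and an $L^\infty$--$L^1$ interpolation on the tail driven by the uniform convergence \eqref{uniform} and the uniform bound on $E_a$. Both routes use the same underlying ingredients -- weak $H^1$ convergence, strong local convergence, and the co-rotational decay/uniform convergence away from the origin that controls the infinite-measure tail -- and both are correct. Yours is a cleaner conceptual packaging (a single reusable strong-convergence lemma plus weak/strong pairing), at the cost of one extra observation; the paper's is a more direct one-shot $\e$-argument. One small remark: the particular choice of $R$ making $|u_j(r)|,|v(r)|\le\pi/2$ for $r\ge R$ is not actually used in your tail estimate, since the interpolation and uniform convergence work for any $R>0$; you could drop that condition.
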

\begin{proof}
By~\eqref{Strauss}, and H\"older, for any $u \in H^1_{r dr}$,
\[
  \int_R^\infty \sin^2(u(r)) |u_r| \; rdr \leq
  \sup_{r \geq R} |\sin(u(r)| \left[ \int_R^\infty \sin^2 u \; r dr \right]^{1/2} 
  \left[ \int_R^\infty u_r^2 \; r dr \right]^{1/2}
  \lec \frac{1}{\sqrt{R}} \sqrt{E^{(\al)}(u)} \sqrt{E_e(u)}.
\]
So given $\e > 0$, choose $R$ such that 
$\int_R^\infty \sin^2(u_j) |(u_j)_r| \; rdr + \int_R^\infty \sin^2(v) |v_r| \; rdr
< \e$. Then
\[
\begin{split}
  \frac{1}{k} [ E_{DM}(u_j) - E_{DM}(v) ] &= 
  \int_0^R (\sin^2(u_j) - \sin^2(v)) (u_j)_r \; rdr +
  \int_0^R \sin^2(v) ((u_j)_r - v_r) \; rdr \\
  & \qquad + \int_R^\infty \left( \sin^2(u_j) (u_j)_r - \sin^2(v) v_r \right) \; r dr
  =: I + II + III.
\end{split}
\]
Since $\sin^2$ is a Lipshitz function, and $u_j \to v$ strongly in $L^2$ on $B_R$, 
$\| \sin^2(u_j) - \sin^2(v) \|_{L^2(B_R)} \to 0$, and so by H\"older,
\[
  | I | \lec \| \sin^2(u_j) - \sin^2(v) \|_{L^2(B_R)} E_e^{1/2}(u_j)
  \to 0.
\]
Weak $H^1$ convergence of $u_j$ shows $II \to 0$. Finally, by choice
of $R$, $|III| < \e$, which was arbitrary.
\end{proof}
Combining~\eqref{lsc} and~\eqref{strong}, we have:
\[
  E_{k,\al,1}(v) \leq \liminf_{j \to \infty} E_{k,\al,1}(u_j) = e_{k,\al,1}^{(1)},
\]
and so it remains to show $v \in X_1$. 

Since $E_e(v) < \infty$, by~\eqref{cont} and~\eqref{smalltail},
$v \in C([0,\infty])$ with $\lim_{r \to \infty} v(r) = 0$, and so
\[
  v \in X_m \mbox{ for some } \;\; m \in \Z.
\]
It remains to show $m=1$.
\begin{lem}
\begin{equation} \label{jump}
  \liminf_{j \to \infty} E_e(u_j) \geq 2|m-1| + E_e(v).
\end{equation}
\end{lem}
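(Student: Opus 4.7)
The plan is to split the energy integral at a small radius $r_0 > 0$ chosen so that $v(r_0)$ is close to $v(0) = m\pi$, bound the tail $[r_0, \infty)$ by weak lower-semicontinuity, and extract the topological jump $2|m-1|$ from the core $[0, r_0]$ by applying the localized lower bound~\eqref{topo} on a carefully chosen partition.

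First, fix $\epsilon \in (0, \pi/3)$ and use continuity of $v$ at $0$ together with monotone convergence $E_e^{[r_0, \infty)}(v) \uparrow E_e(v)$ as $r_0 \downarrow 0$ to pick $r_0 = r_0(\epsilon) > 0$ with $|v(r_0) - m\pi| < \epsilon/2$ and $E_e^{[r_0, \infty)}(v) \geq E_e(v) - \epsilon$. By the uniform convergence~\eqref{uniform}, for all $j$ sufficiently large one has $|u_j(r_0) - m\pi| < \epsilon$. On the tail, weak lower-semicontinuity of $\int_{r_0}^\infty u_r^2 \, r\, dr$ (from the weak $H^1_{r dr}$-convergence $(u_j)_r \rightharpoonup v_r$) combined with Fatou's lemma applied to the a.e.\ convergence $\sin^2(u_j)/r^2 \to \sin^2(v)/r^2$ yields $\liminf_{j \to \infty} E_e^{[r_0, \infty)}(u_j) \geq E_e^{[r_0, \infty)}(v) \geq E_e(v) - \epsilon$.

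The hard part is the core bound on $[0, r_0]$, where one must harness the winding ``lost'' between $u_j(0) = \pi$ and $u_j(r_0) \approx m\pi$. Assume $m > 1$ (the case $m < 1$ is symmetric, and $m = 1$ is vacuous). Since $u_j$ is continuous with $u_j(0) = \pi$ and $u_j(r_0) > (m-1)\pi$ for $\epsilon < \pi$, the intermediate value theorem produces points $0 =: t_0 < t_1 < \cdots < t_{m-2} < t_{m-1} := r_0$ with $u_j(t_\ell) = (\ell+1)\pi$ for $\ell = 1, \ldots, m-2$, taking $t_\ell$ to be the first such crossing strictly after $t_{\ell-1}$. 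Applying~\eqref{topo} on each subinterval with the sign chosen to maximize the bound gives $E_e^{[t_\ell, t_{\ell+1}]}(u_j) \geq |\cos u_j(t_{\ell+1}) - \cos u_j(t_\ell)|$, which evaluates to $|(-1)^{\ell+2} - (-1)^{\ell+1}| = 2$ for $\ell = 0, \ldots, m-3$ and to $|\cos u_j(r_0) - (-1)^{m-1}| \geq 2 - O(\epsilon^2)$ for $\ell = m-2$; summing over all $m-1$ subintervals yields $E_e^{[0, r_0]}(u_j) \geq 2(m-1) - O(\epsilon^2) = 2|m-1| - O(\epsilon^2)$, uniformly for large $j$.

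Combining the two pieces, $\liminf_j E_e(u_j) \geq 2|m-1| + E_e(v) - \epsilon - O(\epsilon^2)$, and sending $\epsilon \downarrow 0$ proves~\eqref{jump}. The essential subtlety — and main obstacle — is the choice of the $t_\ell$ as successive first crossings of consecutive integer multiples of $\pi$: this is what allows the $\pm$ signs in~\eqref{topo} to be arranged on adjacent subintervals so that the $|\cos|$-jumps add rather than cancel, faithfully recording the full topological deficit $|m-1|$ that cannot be seen by the weak limit alone.
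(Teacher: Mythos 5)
Your proof is correct, but it takes a genuinely different route from the paper's.

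The paper argues via an energy-splitting identity of Brezis--Lieb type: it writes $u_j = v + w_j$ with $w_j := u_j - v$, uses a trig identity to expand $E_e(u_j) = E_e(v) + E_e(w_j) + (\text{cross terms})$, shows the cross terms vanish in the limit (this needs the weak convergence $(w_j)_r \rightharpoonup 0$ together with a separately-proved weak convergence $\sin(w_j)/r \rightharpoonup 0$ in $L^2$), and then applies the global topological bound~\eqref{bog2} to $w_j \in X_{1-m}$ to get $\liminf E_e(w_j) \geq 2|m-1|$. Your proof instead splits the \emph{domain}, not the function: on the tail $[r_0,\infty)$ the weak convergence and Fatou recover $E_e^{[r_0,\infty)}(v) \geq E_e(v) - \epsilon$ with no trig manipulation, while on the core $[0,r_0]$ the localized bound~\eqref{topo}, applied on a partition by successive first crossings of the integer multiples of $\pi$ lying between $u_j(0)=\pi$ and $u_j(r_0)\approx m\pi$, extracts exactly $2|m-1| - O(\epsilon^2)$ directly from $u_j$ itself. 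Sending $\epsilon\to 0$ then closes the argument. Your version is arguably more elementary and more geometric — it bypasses the trig-identity energy decomposition and the auxiliary weak-convergence lemma entirely, reading the lost degree straight off the boundary mismatch. The trade-off is the extra $\epsilon$-bookkeeping and the need to choose the crossing points carefully so the cosine jumps accumulate rather than cancel, a point you correctly flag; the paper's approach packages the same topological fact through~\eqref{bog2} applied once, at the cost of the more delicate vanishing-cross-terms computation.
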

\begin{proof}
Set $w_j = u_j - v$ and write $u_j = v + w_j$ so that
\begin{equation} \label{ensplit}
  E_e(u_j) = E_e(v) + E_e(w_j) + 
  \int_0^\infty  v_r (w_j)_r \; r dr + \int_0^\infty \frac{1}{2 r^2}
  (\sin^2(v + w_j) - \sin^2(v) - \sin^2(w_j)) \; r dr.
\end{equation}
By the weak convergence, the first integral on the right $\rightarrow 0$.
For the second integral, we first claim that
\begin{equation} \label{weak}
  \frac{\sin(w_j)}{r} \to 0 \mbox{ weakly in } L^2_{r dr}.
\end{equation}
To see this, note
\[
  \frac{\sin(w_j)}{r} = \cos(v) \left( \frac{\sin(u_j)}{r} - \frac{\sin(v)}{r} \right)
  + \frac{\sin(v)}{r} (\cos(v) - \cos(u_j)).
\]
The first term tends weakly to $0$ by~\eqref{converge2}, and the fact that 
$\cos(v)$ is bounded. The second term tends to $0$ strongly, since
for any $R > 0$,
\[
  \| \frac{\sin(v)}{r} (\cos(v) - \cos(u_j)) \|_2 \leq
  2 \| \frac{\sin(v)}{r} \|_{L^2(B_R)} + \| \cos(u_j) - \cos(v) \|_{L^\infty(B_R^c)}
  \| \frac{\sin(v)}{r} \|_2,
\]
the first term $\to 0$ as $R \to 0$ while the second $\to 0$ (for any fixed $R$)
as $j \to \infty$ by the uniform convergence~\eqref{uniform}.
By trig identities, the second integral on the right of~\eqref{ensplit} can be written
\[
  \int_0^\infty \cos(v) \frac{\sin(v)}{r} \cos(w_j) \frac{\sin(w_j)}{r} \; r dr  -
  \int_0^\infty \frac{\sin^2(v)}{r^2} \sin^2(w_j) \; r dr.
\]
The first integral $\to 0$ by~\eqref{weak} (and boundedness of $\cos(w_j)$
and $\cos(v)$, and $\sin(v)/r \in L^2$), while the second integral is bounded by
\[
  \int_{0}^R \frac{\sin^2(v)}{r^2} \; r dr + \sup_{r \geq R} \sin^2(w_j(r))
  \| \sin(v)/r \|_2,
\]
with the first term $\to 0$ as $R \to 0$, and the second $\to 0$ (for fixed $R$)
as $j \to \infty$ by~\eqref{uniform} (and trig identities).
Thus from~\eqref{ensplit} we see
\[
  \liminf_{j \to \infty} E_e(u_j) = E_e(v) +  \liminf_{j \to \infty} E_e(w_j).
\]
Finally, since $w_j(0) = (1-m) \pi$, the lower bound~\eqref{bog2} 
shows $E_e(w_j) \geq 2|m-1|$, and
the lemma follows.
\end{proof}
Combining~\eqref{nobub}, \eqref{lsc}, \eqref{strong}, \eqref{jump}, 
and~\eqref{lb} we get
\[
\begin{split}
  2 &> e^{(1)}_{k,\al,1}  = \liminf_{j \to \infty} E_{k,\al,1}(u_j) \geq
  2|m-1| + E_{k,\al,1}(v) \\ &\geq 2|m-1| + e_{k,\al,1}^{(m)}
  \geq 2|m-1| + 2(1-k^2)|m| \geq 2|m-1|
\end{split}
\]
from which $m=1$ follows, completing the proof of the theorem.
\end{proof}

\subsection{Monotonicity of the profile}
\label{monotoneprofile}

Here we prove Proposition~\ref{monotonicity}

\begin{proof}
With $\al = 1$ and $\be = 1$, the minimizer 
$v(r) \in C^\infty((0,\infty))$ satisfies the Euler-Lagrange equation~\eqref{E-L} 
\begin{equation}\label{EL}
  -v_{rr} - \frac{1}{r}v_r + \frac{1}{r^2}\sin(v)\cos(v) - \frac{k}{r} \sin^2(v) + \sin(v) \cos(v) = 0 
\end{equation}
and the boundary conditions 
\[
  \lim_{r \to 0+} v(r) = \pi, \quad \lim_{r \to \infty} v(r) = 0.
\] 
And by~\eqref{nobub}, 
\begin{equation} \label{nonbub2}
  E_{k,1,1}(v) = e^{(1)}_{k,1,1} < 2.
\end{equation}
\begin{lem} \label{bounds}
For $k$ sufficiently small, $0 \leq v(r) \le \pi$.
\end{lem}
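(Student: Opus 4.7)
The plan is to contradict minimality by testing $v$ against its truncation
\[
  \tilde v(r) := \max(\min(v(r),\pi),0)
\]
into $[0,\pi]$. Setting $A := \{r : v(r) > \pi\}$ and $B := \{r : v(r) < 0\}$, I will show that if $A \cup B$ has positive measure then $E_{k,1,1}(\tilde v) < E_{k,1,1}(v)$, contradicting $v = \arg\min E_{k,1,1}$. Admissibility of $\tilde v \in X_1 \cap L^2$ is immediate: the boundary values $v(0) = \pi$ and $v(\infty) = 0$ are preserved; the Lipschitz chain rule gives $\tilde v_r = v_r \, \chi_{\{0<v<\pi\}}$ a.e., so $|\tilde v_r| \leq |v_r|$ pointwise; and since $v(\infty) = 0$, $\tilde v$ agrees with $v$ outside some bounded interval and is bounded on compact sets, so $\tilde v \in L^2$ and $\tilde v /r \in L^2$.

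On $A \cup B$ the function $\tilde v$ is locally constant equal to $0$ or $\pi$, so both $\tilde v_r$ and $\sin \tilde v$ vanish there, while $\tilde v = v$ on the complement. Introducing the nonnegative localized quantities
\[
  \Delta_e := \tfrac{1}{2}\int_{A\cup B}\!\left(v_r^2 + \tfrac{\sin^2 v}{r^2}\right) r\,dr, \qquad
  \Delta_a := \tfrac{1}{2}\int_{A\cup B}\sin^2 v \; r\,dr,
\]
one obtains $E_e(v) - E_e(\tilde v) = \Delta_e \geq 0$ and $E_a(v) - E_a(\tilde v) = \Delta_a \geq 0$. The Dzyaloshinskii-Moriya contribution reduces similarly,
\[
  E_{DM}(v) - E_{DM}(\tilde v) = \int_{A\cup B} \sin^2 v \cdot v_r \; r\,dr,
\]
but this quantity is not sign-definite; that is the main obstacle.

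To control it I would use $|\sin v| \leq 1$, Cauchy-Schwarz, and AM-GM to produce the localized bound
\[
  \left| E_{DM}(v) - E_{DM}(\tilde v) \right| \leq \int_{A\cup B} |\sin v| \cdot |v_r|\; r\,dr \leq 2\sqrt{\Delta_a \, \Delta_e} \leq \Delta_e + \Delta_a,
\]
whence
\[
  E_{k,1,1}(v) - E_{k,1,1}(\tilde v) \geq \Delta_e + \Delta_a - k(\Delta_e + \Delta_a) = (1-k)(\Delta_e + \Delta_a).
\]
Each connected component of $A$ is a bounded open interval with $v = \pi$ at the endpoints on which $v$ is non-constant, so $\int v_r^2 r\,dr > 0$ there; the same holds for $B$, noting that $v(\infty) = 0$ supplies a right endpoint with $v = 0$. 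Hence if $A \cup B$ has positive measure then $\Delta_e > 0$, and for any $k < 1$ one gets the strict inequality $E_{k,1,1}(\tilde v) < E_{k,1,1}(v)$, contradicting the minimality of $v$. This forces $v \in [0,\pi]$ everywhere. Note the truncation argument works for any $k < 1$; the lemma's "$k$ sufficiently small" is no stronger a requirement at this stage.
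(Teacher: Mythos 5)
Your proposal is correct, and it takes a genuinely different route from the paper. The paper proves this lemma by a pointwise maximum-principle argument at the Euler--Lagrange ODE level: assuming a local extremum of $v$ outside $[0,\pi]$, it reads off a sign condition on $\sin(v)\cos(v)$ from the ODE, uses this to push the extremal value quantitatively away from $\{0,\pi\}$ (this is where the $\arctan(2/k)$ computation enters, forcing $k$ small), and then applies the topological lower bound~\eqref{topo} together with~\eqref{basiclower} to contradict $E_{k,1,1}(v) < 2$. Your truncation-to-$[0,\pi]$ argument instead works purely variationally: you never touch the ODE, only the minimality of $v$ and the pointwise structure of the three energy densities, and you absorb the DM term by Cauchy--Schwarz and AM--GM exactly as in~\eqref{basiclower}. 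This is cleaner, avoids the second-order ODE entirely, and is uniform in $k \in (0,1)$ rather than requiring $k$ small. It also dispenses with the non-bubbling bound $e^{(1)}_{k,1,1} < 2$ that the paper's proof invokes. The one place you should be a touch more careful in the write-up is the step where positive measure of $A \cup B$ yields $\Delta_e > 0$: the cleanest statement is that $A \cup B$ is open (by continuity of $v$), so nonemptiness already gives positive measure, and on each connected component the endpoint boundary values force $v_r \not\equiv 0$, giving a strictly positive contribution to $\int v_r^2 \, r\,dr$ — you say essentially this, and it is fine. Also note $|\tilde v| \le |v|$ pointwise (rather than only outside a compact set), which is what directly gives $\tilde v \in L^2$.
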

\begin{proof}
We use proof by contradiction, and a maximum-principle argument.

First suppose $v$ has a local minimum at $r_1 \in (0,\infty)$ 
with $v(r_1) < 0$. Then $v_r(r_1) = 0$ and $v_{rr} (r_1) \ge 0$, and so
\begin{equation}  \label{1}
  (\frac{1}{r_1^2} + 1)\sin(v(r_1))\cos(v(r_1)) - \frac{k}{r_1} \sin^2(v(r_1)) \ge 0.
\end{equation}
Thus $\sin(v(r_1)) \cos(v(r_1)) \ge 0$, implying 
$v(r_1) \le -\frac{\pi}{2}$.
Then by the energy lower bounds~\eqref{basiclower} and~\eqref{topo},
\[
  E_{k,1,1}(v) \ge (1 - k^2) E_e(v) \geq 4(1-k^2) \geq 2 
\]
for $k \leq \frac{1}{\sqrt{2}}$,  contradicting~\eqref{nonbub2}.

On the other hand, suppose $v$ has a local maximum at $r_2 \in (0,\infty)$ with 
$v(r_2) > \pi$. Then $v_r(r_2) = 0$ and $v_{rr}(r_2)\le 0$ so
\begin{equation} \label{2}
  (\frac{1}{r_2^2} + 1)\sin(v(r_2))\cos(v(r_2)) - \frac{k}{r_2} \sin^2(v(r_2)) \le 0.
\end{equation}
If $v(r_2) \ge \frac{3}{2}\pi$, then again
by the lower bounds~\eqref{basiclower} and~\eqref{topo},
\[
  E_{k,1,1}(v) \ge (1 - k^2) E_e(v) \geq 4(1-k^2) \geq 2
\]
for $k \leq \frac{1}{\sqrt{2}}$, contradicting~\eqref{nonbub2}.
So we may assume $\pi < v(r_2) < \frac{3}{2} \pi$ and 
so $\sin(v(r_2)) < 0$ and $\cos(v(r_2)) < 0$. Then by~\eqref{2},
\[
  0 \geq \sin(v(r_2)) \cos (v(r_2)) \left[  
  \frac{1}{r_2^2} + 1 - \frac{k}{r_2} \tan(v(r_2)) \right],
\]
so 
\[
  \tan(v(r_2)) \geq \frac{1}{k} \left(  \frac{1}{r_2} + r_2 \right) \geq \frac{2}{k}
\]
and so $v(r_2) \ge \pi + \arctan(\frac{2}{k})$.
Then again by lower bounds~\eqref{basiclower} and~\eqref{topo},
\[
  E_{k,1,1}(v) \ge (1 - k^2) E_e(v) \geq (1-k^2)
  \left[ 2 \left| \cos\left(\pi + \arctan(\frac{2}{k})\right) + 1 \right| +  2  \right]
  \geq 2(1-k^2) \left[ 2 - \frac{k}{\sqrt{4 + k^2}} \right] \geq 2
\]
provided $k$ is small enough, contradicting~\eqref{nonbub2}.
\end{proof}

\begin{lem} \label{boundsmin}
If $v$ has a local minimum at $r_1 \in (0,\infty)$, then
$0 \le v(r_1) \le \frac{\pi}{2}$.
\end{lem}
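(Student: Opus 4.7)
The plan is to mimic the maximum-principle argument of Lemma~\ref{bounds}, but to read off the sign of $\cos(v(r_1))$ directly from the Euler-Lagrange equation~\eqref{EL}, rather than appealing to the energy bound~\eqref{nonbub2}. Throughout, we use Lemma~\ref{bounds}, which gives $0 \le v(r) \le \pi$ on $(0,\infty)$; in particular $v(r_1) \in [0,\pi]$, so $\sin(v(r_1)) \ge 0$.

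At the interior local minimum $r_1$ we have $v_r(r_1) = 0$ and $v_{rr}(r_1) \ge 0$. Substituting into~\eqref{EL} yields
\[
  \sin(v(r_1)) \Bigl[ \bigl(\tfrac{1}{r_1^2} + 1\bigr) \cos(v(r_1)) - \tfrac{k}{r_1} \sin(v(r_1)) \Bigr] \ge 0.
\]
I would then split into two cases. If $\sin(v(r_1)) > 0$, dividing by it and using $\sin(v(r_1)) \ge 0$, $k > 0$, $r_1 > 0$ gives
\[
  \cos(v(r_1)) \ge \frac{k r_1}{1 + r_1^2} \sin(v(r_1)) \ge 0,
\]
and since $v(r_1) \in (0,\pi)$ this forces $v(r_1) \in (0, \tfrac{\pi}{2}]$, as claimed.

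If instead $\sin(v(r_1)) = 0$, then $v(r_1) \in \{0, \pi\}$, and the value $0$ is already in the allowed range. The remaining possibility $v(r_1) = \pi$ must be ruled out; this is the only nontrivial step. Since $v \le \pi$ globally by Lemma~\ref{bounds}, and $r_1$ is an interior local minimum, we would have $v \equiv \pi$ in a neighborhood of $r_1$. The constant function $\pi$ is a smooth solution of~\eqref{EL} on $(0,\infty)$, and the equation is a regular second-order ODE away from $r=0$, so standard ODE uniqueness on $(0,\infty)$ forces $v \equiv \pi$ identically, contradicting $v(\infty) = 0$. Hence $v(r_1) = 0$, completing the proof.

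The only real obstacle is the exclusion of $v(r_1) = \pi$ just described; the rest is immediate from the sign of the cubic/quadratic expression in~\eqref{EL} at a critical point.
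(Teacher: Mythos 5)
Your proof is correct and follows the paper's approach: substitute the critical-point conditions $v_r(r_1) = 0$, $v_{rr}(r_1) \ge 0$ into the Euler-Lagrange equation~\eqref{EL} to obtain inequality~\eqref{1}, and then read off $\cos(v(r_1)) \ge 0$ from $\sin(v(r_1)) \ge 0$ (which holds since $v(r_1) \in [0,\pi]$ by Lemma~\ref{bounds}). You are in fact slightly more careful than the paper at the endpoint: the paper's proof asserts $0 \le v(r_1) < \pi$ (strict) citing only Lemma~\ref{bounds}, which gives only $\le \pi$; your ODE-uniqueness argument explicitly excluding $v(r_1) = \pi$ closes that small gap.
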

\begin{proof}
Inequality~\eqref{1} holds, so  $\sin(v(r_1)) \cos(v(r_1))\ge 0$,
and by Lemma~\ref{bounds}, $0 \leq v(r_1) < \pi$. So 
$0 \leq v(r_1) \leq  \frac{\pi}{2}$.
\end{proof}

\begin{lem} \label{boundsmax}
If $v$ has a local maximum at $r_2 \in (0,\infty)$, 
then $\arctan(\frac{2}{k}) \leq v(r_2) \leq \pi$.
\end{lem}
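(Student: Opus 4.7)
The plan is to exploit the Euler-Lagrange equation~\eqref{EL} at the interior local maximum, exactly as in the proof of Lemma~\ref{bounds}, but extract a \emph{lower} bound on $v(r_2)$ rather than ruling out values. At $r_2 \in (0,\infty)$ we have $v_r(r_2) = 0$ and $v_{rr}(r_2) \leq 0$, so substituting into~\eqref{EL} yields
\[
  \left(\tfrac{1}{r_2^2} + 1\right) \sin(v(r_2)) \cos(v(r_2)) - \tfrac{k}{r_2} \sin^2(v(r_2)) \leq 0.
\]
The upper bound $v(r_2) \leq \pi$ is immediate from Lemma~\ref{bounds}, so the only task is the lower bound $v(r_2) \geq \arctan(2/k)$.

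First I would dispose of the degenerate case $v(r_2) = 0$: since $v \geq 0$ by Lemma~\ref{bounds}, a local maximum at value $0$ forces $v \equiv 0$ in a neighbourhood, and then by real-analyticity (or ODE uniqueness away from the origin applied to~\eqref{EL}) one gets $v \equiv 0$ on $(0,\infty)$, contradicting $v(0+) = \pi$. Thus we may assume $\sin(v(r_2)) > 0$ (the boundary value $v(r_2) = \pi$ already satisfies the desired bound since $\arctan(2/k) < \pi/2 < \pi$), and divide the above inequality by $\sin(v(r_2))$ to obtain
\[
  \left(\tfrac{1}{r_2^2} + 1\right) \cos(v(r_2)) \leq \tfrac{k}{r_2} \sin(v(r_2)).
\]

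Now split on the sign of $\cos(v(r_2))$. If $v(r_2) \geq \pi/2$, then $v(r_2) > \arctan(2/k)$ automatically, since $\arctan(2/k) < \pi/2$ for every $k > 0$. If instead $v(r_2) < \pi/2$, then $\cos(v(r_2)) > 0$ and I can rearrange to
\[
  \tan(v(r_2)) \geq \frac{1}{k}\left(r_2 + \tfrac{1}{r_2}\right) \geq \frac{2}{k}
\]
by AM-GM, giving $v(r_2) \geq \arctan(2/k)$, as desired.

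This argument is purely local and algebraic at the critical point $r_2$, so no estimate is really hard; the only subtlety is to recognize that the two cases $v(r_2) \geq \pi/2$ and $v(r_2) < \pi/2$ must be treated separately, because the dichotomy used to derive a clean inequality requires $\cos(v(r_2)) > 0$. The main payoff is that, combined with Lemma~\ref{boundsmin} (which puts any interior minimum in $[0, \pi/2]$), one obtains a forbidden interval $(\pi/2, \arctan(2/k))$ between the allowed max and min values; since $\arctan(2/k) \to \pi/2$ as $k \to 0$ makes this gap shrink, but it is nonempty for every $k > 0$, no oscillation is possible and monotonicity follows in the subsequent argument.
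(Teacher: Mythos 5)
Your proof is correct and follows the same approach as the paper: use the sign information $v_r(r_2)=0$, $v_{rr}(r_2)\le 0$ in the Euler--Lagrange equation to get inequality~\eqref{2}, split on the sign of $\cos(v(r_2))$, and in the $\cos>0$ case apply AM--GM to $r_2+\tfrac{1}{r_2}$; your explicit treatment of the degenerate endpoints $v(r_2)\in\{0,\pi\}$ is a small but welcome tightening of the paper's terse claim that $0<v(r_2)\le\pi$. One caveat on your closing remark: since $\arctan(2/k)<\pi/2$, Lemmas~\ref{boundsmin} and~\ref{boundsmax} alone do not create a forbidden interval between possible minimum and maximum values (their allowed ranges overlap on $[\arctan(2/k),\pi/2]$), so the monotonicity argument in the paper actually needs the sharper Lemma~\ref{bounds3} together with the localized Pohozaev identity.
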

\begin{proof}
By Lemma~\ref{bounds}, $0 < v(r_2) \leq \pi$.
If $\cos(v(r_2)) \le 0$, then
$\frac{\pi}{2} \leq v(r_2) \leq \pi$.
Otherwise, $\cos(v(r_2)) > 0$ and $\sin(v(r_2)) > 0$, 
and since~\eqref{2} holds,
$\frac{1}{r_{2}^2} + 1 - \frac{k}{r_2} \tan(v(r_2)) \leq 0$.  So
\[
  \tan(v(r_2))  \geq \frac{1}{k} \left( \frac{1}{r_2} + r_2 \right) \geq \frac{2}{k},
\]
and $v(r_2) \ge \arctan(\frac{2}{k})$.
\end{proof}

\begin{lem} \label{bounds3}
For $k$ sufficiently small, if 
$v$ has a local minimum at $r_1 \in (0,\infty)$, then
\begin{equation} \label{sinminimum}
  \sin(v(r_1))  = 1 - \mathcal{O}(k^2). 
\end{equation}
\end{lem}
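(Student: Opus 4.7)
The plan is to combine Lemma~\ref{boundsmax}, applied at a local maximum of $v$ occurring past $r_1$, with the sharp localized topological lower bound~\eqref{topo} on $E_e$ split across three sub-intervals, together with the upper bound $E_{k,1,1}(v) < 2$ from~\eqref{nonbub2}. These ingredients pinch $\cos v(r_1) = O(k)$, and since $v(r_1) \in [0, \pi/2]$ by Lemma~\ref{boundsmin}, the conclusion $\sin v(r_1) = 1 - O(k^2)$ then follows.

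The first step is to produce a strict local maximum $r_2 > r_1$ with $v(r_2) > v(r_1)$. Since $v$ is a smooth solution of the analytic ODE~\eqref{EL} on $(0,\infty)$, it is real-analytic there, and is not constant (because $v(0) = \pi$ and $v(\infty) = 0$); hence the local minimum at $r_1$ is isolated, so $v$ is strictly increasing on some interval $(r_1, r_1 + \varepsilon)$. As $v$ is bounded (Lemma~\ref{bounds}) and $v(\infty) = 0$, the supremum of $v$ on $[r_1, \infty)$ is attained at some $r_2 \in (r_1, \infty)$ and exceeds $v(r_1)$; this $r_2$ is a local maximum. Lemma~\ref{boundsmax} then gives $v(r_2) \geq \arctan(2/k)$, so since cosine decreases on $[0,\pi]$ and $v(r_2) \leq \pi$,
\[
  \cos v(r_2) \leq \cos(\arctan(2/k)) = \frac{k}{\sqrt{k^2+4}} = \frac{k}{2} + O(k^3).
\]

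Next, I would split $E_e(v)$ over $[0, r_1]$, $[r_1, r_2]$, and $[r_2, \infty)$ and apply~\eqref{topo} on each interval with its optimal sign. Using $v(0) = \pi$, $v(\infty) = 0$, and $0 \leq v(r_1) \leq v(r_2) \leq \pi$ (so cosines compare monotonically), this yields
\[
  E_e(v) \geq (1+\cos v(r_1)) + (\cos v(r_1) - \cos v(r_2)) + (1-\cos v(r_2)) = 2 + 2\cos v(r_1) - 2\cos v(r_2).
\]
On the other hand, the elementary bound~\eqref{basiclower} together with~\eqref{nonbub2} gives $E_e(v) \leq E_{k,1,1}(v)/(1-k^2) < 2/(1-k^2) = 2 + O(k^2)$. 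Comparing yields $\cos v(r_1) \leq \cos v(r_2) + O(k^2) \leq k/2 + O(k^2)$, hence $\cos v(r_1) = O(k)$. Since $\sin v(r_1) \geq 0$ (because $v(r_1) \in [0, \pi/2]$) and $\sin^2 v(r_1) = 1 - \cos^2 v(r_1) = 1 - O(k^2)$, we conclude $\sin v(r_1) = 1 - O(k^2)$. The one delicate point is the existence of the strict local maximum $r_2$ with $v(r_2) > v(r_1)$, which rests on the fact that a nonconstant analytic solution of the Euler--Lagrange ODE cannot have a flat local minimum; everything else is elementary comparison of explicit constants.
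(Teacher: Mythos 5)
Your proof is correct and follows essentially the same route as the paper's: locate a local maximum $r_2>r_1$, bound $\cos v(r_2)$ via Lemma~\ref{boundsmax}, combine the localized topological bound~\eqref{topo} with~\eqref{basiclower} and~\eqref{nonbub2} to pinch $\cos v(r_1)-\cos v(r_2)=O(k^2)$, and conclude via Lemma~\ref{boundsmin}. The one place where you go beyond the paper is in justifying the existence of the local maximum $r_2$ (via real-analyticity of solutions of~\eqref{EL} and compactness of $[r_1,\infty]$), a point the paper asserts without comment; this is a genuine, if small, added detail, and the rest is the identical computation (your three-interval split of $E_e$ gives the same lower bound $2+2\cos v(r_1)-2\cos v(r_2)$ that the paper's expression simplifies to).
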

\begin{proof}
$v$ must have a local maximum at some $r_2 > r_1$. 
By~\eqref{nonbub2} and the lower bounds~\eqref{basiclower} and~\eqref{topo},
 \[ 
   2 > E_{k,1,1}(v) \ge (1 - k^2 )E_e(v) \ge 
   (1 - k^2) [\cos(v(r_1)) + 1 + 2 ( \cos(v(r_1)) - \cos(v(r_2)) ) + 1 - \cos(v(r_1))]
 \]
 so
 \[
   \cos(v(r_1)) - \cos(v(r_2)) < \frac{1}{1-k^2} - 1 = \frac{k^2}{1-k^2}.
\]
By Lemmas~\ref{boundsmin} and~\ref{boundsmax},
\[
  0 \leq \cos(v(r_1)) < \frac{k^2}{1-k^2} + \cos(\arctan(\frac{2}{k}))
  = \frac{k^2}{1-k^2} +  \frac{k}{\sqrt{4 + k^2}} = O(k),
\]
and
\[
  \sin(v(r_1)) = \sqrt{1 - \cos^2(v(r_1))} = 1 - O(k^2).
\]

%
%
%
%
%
 \end{proof}
 
To complete the proof of Proposition~\ref{monotonicity},
we suppose $v$ has a local minimum at $r_1 \in (0,\infty)$,
and derive a contradiction.
By the localized version~\eqref{prepoho2} of the Pohozaev-type identity,
\begin{equation}  \label{poho2}
   2 E_a^{[0, r_1]}(v) + k E_{DM}^{[0,r_1]}(v) = \frac{1}{2} 
   (1 + r_1^2) \sin^2(v(r_1)),
 \end{equation}
since $v_r(r_1) = 0$.
By Lemma~\ref{boundsmin}, $\cos(u(r_1)) \ge 0$. So
using~\eqref{poho2},
\[
\begin{split}
  E_{k,1,1}^{[0,r_1]}(v) &= E_e^{[0,r_1]}(v) + 
  \frac{1}{2} (1 + r_1^2) \sin^2(v(r_1)) - E_a^{[0, r_1]}(v) \\ &\geq
  1 + \cos(v(r_1)) + \frac{1}{2} (1 + r_1^2) \sin^2(v(r_1)) - \frac{1}{4} r_1^2
 \end{split}
 \]
where we used~\eqref{topo} and
$E_a^{[0,r_1]}(v) \leq \frac{1}{2} \int_0^{r_1} r dr = \frac{1}{4} r_1^2$.
Then using again~\eqref{basiclower} and~\eqref{topo},
and that $\sin(v(r_1)) = 1- \mathcal{O}(k^2)$  by Lemma~\ref{bounds3}, 
\[
\begin{split}
  E_{k,1,1}(v) &= E_{k,1,1}^{[0,r_1]}(v) + E_{k,1,1}^{[r_1,\infty)}(v) \\
  &\ge 
  1 + \cos(v(r_1)) + \frac{1}{2} (1 + r_1^2) \sin^2(v(r_1)) - \frac{1}{4} r_1^2
  + (1-k^2)(1 - \cos(v(r_1)) \\
  &\ge 2 - k^2 + \frac{1}{2} (1 + r_1^2)(1 - O(k^2)) - \frac{1}{4} r_1^2 \\
  &\ge \frac{5}{2} - \mathcal{O}(k^2)  > 2 
\end{split}
\]
for $k$ sufficiently small, contradicting~\eqref{nonbub2}.
\end{proof}

\subsection{Resolvent estimates}
\label{sec:resolvent}

\subsubsection{Free resolvent estimates}
\label{freeresolvent}

Here we record some simple estimates for the free resolvent 
$R_0(\be) = (-\Delta_r + \frac{1}{r^2})^{-1}$,
which include~\eqref{p=1}, \eqref{p=0}, and~\eqref{p=-1}:
\begin{lem} \label{freeest}
For $f$ radial:
\begin{equation}  \label{freeest1}
  \|R_0(\be)  f \|_X \lec \be^{p-1} \| r^p f \|_{L^2}, \quad 0 \leq p \leq 1;
\end{equation}
\begin{equation}  \label{freeest2}
  \| R_0(\be)  f \|_X \lec \frac{1}{\be^2} \| f \|_{X} ;
\end{equation}
\begin{equation}  \label{freeest3}
  \| R_0(\be)  f \|_X \lec \be^{p-1} \left( \| r^{p+1} f_r \|_{L^2} + \| r^p f \|_{L^2}  \right),
  \quad -1 \leq p \leq 0.
\end{equation}
\end{lem}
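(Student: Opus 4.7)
The plan is to identify $R_0(\be)$ with the resolvent of $-\Delta + \be^2$ on $\R^2$ restricted to the angular-momentum-one subspace, which reduces all three estimates to standard facts on $\R^2$. Precisely, for a radial profile $f$, set $F(x) := f(|x|) e^{i\theta}$; then a direct polar-coordinate computation gives $(-\Delta + \be^2) F = [(-\Delta_r + \frac{1}{r^2} + \be^2) f]\, e^{i\theta}$, so $U := (-\Delta + \be^2)^{-1} F = (R_0(\be) f) \cdot e^{i\theta}$. Moreover
\[
  \|U\|_{L^2(\R^2)}^2 = 2\pi \|f\|_{L^2}^2, \qquad
  \|\nabla U\|_{L^2(\R^2)}^2 = 2\pi \int_0^\infty \left( f_r^2 + \frac{f^2}{r^2} \right) r\, dr,
\]
so the $X$ norm is equivalent to $\|\nabla U\|_{L^2(\R^2)}$. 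Every estimate in the lemma will then be a familiar $\R^2$ resolvent bound translated back to profiles.

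For \eqref{freeest1} I would first treat the two endpoints and then interpolate. At $p=0$: the energy identity $\|\nabla U\|_{L^2}^2 \leq ((-\Delta + \be^2) U, U) = (F,U)$ combined with the crude $L^2$ bound $\|U\|_{L^2} \leq \be^{-2} \|F\|_{L^2}$ gives $\|\nabla U\|_{L^2} \lec \be^{-1} \|F\|_{L^2}$, i.e.\ $\|R_0(\be) f\|_X \lec \be^{-1} \|f\|_{L^2}$. At $p=1$: estimate $(F,U) \leq \|\,|x|F\,\|_{L^2(\R^2)} \cdot \|U/|x|\,\|_{L^2(\R^2)}$, and use the trivial pointwise inequality $|U|^2/|x|^2 \leq |\nabla U|^2$ (which is immediate for our angular-momentum-one $U$ since $|\nabla U|^2 = f_r^2 + f^2/r^2$) to close the bound: $\|\nabla U\|_{L^2} \lec \|rf\|_{L^2}$; the companion bound $\|f/r\|_{L^2} \leq \|f\|_X$ is then automatic. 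Complex interpolation on the standard weighted scale $L^2(r^{2p}\, r\,dr)$, whose norm is $\|r^p f\|_{L^2}$, yields \eqref{freeest1} for all $0 \leq p \leq 1$ with the correct $\be^{p-1}$ factor.

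Estimate \eqref{freeest2} (equivalently the endpoint $p=-1$ of \eqref{freeest3}) is cleanest via Plancherel on $\R^2$: from $\hat U = \hat F/(|\xi|^2 + \be^2)$ we have $|\xi| \,|\hat U| \leq \be^{-2}\, |\xi|\,|\hat F|$, so $\|\nabla U\|_{L^2} \leq \be^{-2} \|\nabla F\|_{L^2}$, which is exactly $\|R_0(\be)f\|_X \lec \be^{-2} \|f\|_X$. The other endpoint of \eqref{freeest3}, at $p=0$, is already implied by \eqref{freeest1} at $p=0$ (the norm $\|rf_r\|_{L^2} + \|f\|_{L^2}$ dominates $\|f\|_{L^2}$). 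Complex interpolation between the $p=-1$ and $p=0$ endpoints, applied to the two weighted components $r^{p+1} f_r$ and $r^p f$ separately, gives \eqref{freeest3} in the full range $-1 \leq p \leq 0$.

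The only item requiring background is the standard fact that the weighted spaces $\{f : r^p f \in L^2\}$ form a complex interpolation scale in $p$ (Stein-Weiss). There is no serious obstacle: the crux is really the 2D identification, after which each endpoint is a two-line energy, duality, or Plancherel argument, and interpolation packages the rest.
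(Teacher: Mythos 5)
Your proof is correct and takes essentially the same route as the paper: identify $R_0(\be)$ with the $\R^2$ resolvent on the angular-momentum-one sector, prove the endpoint cases $p=0,1$ via the energy identity $(F,U)=\|\nabla U\|_{L^2}^2 + \be^2\|U\|_{L^2}^2$ and duality, and obtain the intermediate cases and~\eqref{freeest3} by interpolation. The only cosmetic difference is that you derive~\eqref{freeest2} via Plancherel while the paper applies the same energy identity to $\nabla U$ and $\nabla F$ (using that $\nabla$ commutes with $-\Delta+\be^2$); both are trivial and equivalent.
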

\begin{proof}
These estimates follow from an elementary relation for functions on $\R^2$: for 
\[
  g(x), \; k(x), \mbox{ with } \; g = (-\Delta + \be^2)^{-1} k,
\]
\begin{equation}  \label{ibp}
  ( g, \; k) = ( g, \; -\Delta g) + \be^2 (g, \; g) = \| \nabla g \|_{L^2}^2 + \be^2 \| g \|_{L^2}^2.
\end{equation}
In particular,
\[
  \| \nabla g \|_{L^2}^2 \leq \left| \langle \frac{g}{|x|}, \; |x| k \rangle \right|
  \leq \left \| \frac{g}{|x|} \right\|_{L^2} \| |x| k \|_{L^2}. 
\] 
Then taking 
\[
  k(x) = e^{i \theta} f(r), \mbox{ so } \; g(x) = e^{i \theta} R_0(\be)  f,
\]
\[
  \| R_0(\be)  f \|_X^2 \lec \| \nabla g \|_{L^2}^2 
  \leq  \left \| \frac{1}{|x|} R_0(\be)  f \right\|_{L^2} 
  \| r f \|_{L^2} \leq \| R_0(\be)  f \|_X \| r f \|_{L^2},
\]
from which the $p=1$ case of~\eqref{freeest1} follows.
Similarly from~\eqref{ibp},
\[
  \be^2 \| g \|_{L^2}^2 + \| \nabla g \|_{L^2}^2 \leq |(g, k)| \leq \| g \|_{L^2} \| k \|_{L^2},
  \quad \implies \quad
  \| g \|_{L^2} \leq \frac{1}{\be^2} \| k \|_{L^2}
\]
and then,
\[
  \| R_0(\be)  f \|_X^2 \lec   \| \nabla g \|_{L^2}^2  
  \leq \frac{1}{\be^2} \| f \|_{L^2}^2
\]
from which the $p=0$ case of~\eqref{freeest1} follows.
The $0 < p < 1$ cases of~\eqref{freeest1} then follow from elementary interpolation.

For~\eqref{freeest2}, replacing $g$ by $\nabla g$ and $k$ by $\nabla k$ in~\eqref{ibp}
(since $\nabla$ commutes with $-\Delta +\be^2$) yields
\[
  \be^2 \| \nabla g \|_{L^2}^2 \leq 
  |\langle \nabla g, \nabla k \rangle | \leq \| \nabla g \|_{L^2} \| \nabla k \|_{L^2},
\]  
from which~\eqref{freeest2} follows.

Bound~\eqref{freeest3} follows from interpolation between the other two bounds.
\end{proof}

%

\subsubsection{Inner products with the free resolvent}
\label{freeinners}

First we prove Lemma~\ref{key}.
\begin{proof}
We use the following Fourier-Bessel transformation: for a real-valued, 
radial function $f$,
\[
  \tilde f(\rho) := 
  -i \widehat{e^{i \th} f(r)}(\xi)|_{\xi = (\rho,0)} = 
  \int_0^\infty J_1(\rho r) f(r) \; r \; dr,
\]
where $J_1$ is the order-$1$ Bessel function of the first kind. In particular 
\[
  ((-\Delta_r + \frac{1}{r^2} ) f)^{\tilde{}} (\rho) = \rho^2 \tilde f(\rho), \qquad 
  (R_0(\be)  f )^{\tilde{}} (\rho) = \frac{ \tilde f(\rho) }{\rho^2 + \be^2}.
\]
Then by Plancherel,
\begin{equation} \label{FBsplit}
  \langle g , R_0(\be)  h \rangle =
  \langle \tilde g , \frac{\tilde{h}}{\rho^2 + \be^2} \rangle =
  \int_0^1 \frac{\tilde g \tilde{h}}{\rho^2 + \be^2} \rho d \rho +
  \int_1^\infty \frac{ \tilde g \tilde{h}}{\rho^2 + \be^2} \rho d \rho.
\end{equation}
By standard estimates for the Fourier transform,
\begin{equation}  \label{g0}
  \| R_0(\be) g \|_{L^\infty} \lec \| g \|_{L^1},
\end{equation}
and
\begin{equation}  \label{highrho}
  h \in X \; \implies \; \rho \tilde h \in L^2,
\end{equation}
and we can easily bound the large $\rho$ integral in~\eqref{FBsplit}:
\begin{equation}  \label{largerho}
  \left| \int_1^\infty \frac{ \tilde g \tilde{h}}{\rho^2 + \be^2} \rho d \rho \right|
  \leq \int_1^\infty \frac{ |\tilde g | | \tilde h| }{\rho^2} \rho d \rho  
  \leq \| \tilde g \|_{L^\infty} \| \rho \tilde h \|_{L^2} \left( \int_1^\infty 
  \frac{1}{\rho^6} \rho d \rho \right)^\frac{1}{2} \lec \| g \|_{L^1}.
\end{equation}

Now we consider the small $\rho$ contribution. 
Suppose first $r^q g \in L^\infty$ for some $q > 3$.
We assume $q < 4$ (which would imply
the desired result for any higher $q$).
Choosing $0 < \e < q-3$,  using the pointwise estimate 
\[  
  |J_1(y) - \frac{1}{2} y| \lec y^{q-2-\e}
\] 
from Taylor's theorem, and setting
\[
  c := \int_0^\infty r g \; r dr, \qquad
  |c| \leq \| r^q g \|_{L^\infty} \int_0^\infty r^{1-q} \; r dr
  \lec \| g \|_{r^{-q} L^\infty},
\]
we have
\begin{equation}  \label{g1}
\begin{split}
  \left| \tilde g(\rho) -  \frac{c}{2} \rho \right| 
  &= \left| \int_0^\infty \left( J_1(\rho r) -  \frac{1}{2} \rho r \right) g(r) \; r dr  \right|
  \lec  \rho^{q-2-\e} \int_0^\infty r^{q-2-\e} |g(r)| \; r dr \\
  &\lec \rho^{q-2-\e} \left( \int_0^1 |g(r)| \; r dr + \| r^q g \|_{L^\infty} 
  \int_1^\infty r^{-2 - \e} \; r dr \right) \\
  &\lec \rho^{q-2-\e} \| g \|_{L^1 \cap r^{-q} L^\infty}.
\end{split}
\end{equation}

Now write 
\[
  h = 2 \chi_{\geq 1}(r) \frac{1}{r} + h^\#,
\]
and note that
\[
  h^\# := h - 2 \chi_{\geq 1}(r) \frac{1}{r} \; \in L^1 \cap L^2 
  \quad \implies \quad \tilde h^\# \; \in L^2 \cap L^\infty.
\] 
Moreover,
\[
  \left( \chi_{\geq 1}(r) \frac{1}{r} \right)^{\Tilde{ }}(\rho) = 
  \int_1^\infty J_1(\rho r) dr = \frac{1}{\rho} \int_\rho^\infty J_1(y) dy, 
\]
and since $\int_0^\infty J_1(y) dy = 1$, and $|J_1(y)| \lec y$, 
\[
  \left|  \left( \chi_{\geq 1}(r) \frac{1}{r} \right)^{\tilde{ }}(\rho) 
  - \frac{1}{\rho} \right| \lec \rho.
\]
Then
\begin{equation} \label{h1}
  | \tilde h (\rho) - \frac{2}{\rho} | \leq
  2  \left|  \left( \chi_{\geq 1}(r) \frac{1}{r} \right)^{\tilde{ }}(\rho) 
  - \frac{1}{\rho} \right| + | \tilde h^\# (\rho) | \lec 1
   \;\; \mbox{ for } \rho \leq 1,
\end{equation}
and in particular
\begin{equation} \label{h2} 
  | \tilde h(\rho) | \lec \frac{1}{\rho}  \;\; \mbox{ for } \rho \leq 1.
\end{equation}

We can now use estimates~\eqref{g1},~\eqref{h1} and~\eqref{h2}
in the small $\rho$ intergal of~\eqref{FBsplit} to get
\[
\begin{split}
  &\left| \int_0^1 \frac{\tilde g \tilde{h}}{\rho^2 + \be^2} \rho d \rho 
  - c \int_0^1 \frac{\rho \; d \rho}{\rho^2 + \be^2}  \right|
   \lec   \int_0^1 \frac{\rho \; d \rho}{\rho^2}
  \left( |\tilde g(\rho) - \frac{c}{2} \rho| |\tilde h(\rho)| + \frac{|c|}{2} \rho 
  |\tilde h(\rho) - \frac{2}{\rho} |  \right) \\
  & \qquad \qquad \qquad \qquad \lec  \int_0^1 \frac{\rho \; d \rho}{\rho^2}
  \left( \rho^{q-2-\e} \frac{1}{\rho} + \rho \right)
  \| g \|_{L^1 \cap r^{-q} L^\infty} \lec \| g \|_{L^1 \cap r^{-q} L^\infty} 
\end{split}
\]
Then since
\[
  \int_0^1 \frac{\rho d \rho}{\rho^2 + \be^2} = \log \left( \frac{1}{\be} \right) + O(\be^2),
\]
and recalling~\eqref{FBsplit} and~\eqref{largerho}, we have established~\eqref{choice}.

The bound~\eqref{innerbound} is an immediate consequence of~\eqref{choice},
while~\eqref{innercomp} follows from the computation, using~\eqref{h3/r}:
\[
  \int_0^\infty r W(r) h(r) = 2 \int_0^\infty \frac{h^3}{r} \; r dr = 4. 
\]

Finally, we turn to~\eqref{innerbound2}. So suppose now $r^3 g \in L^\infty$.
Since $|J_1(y)| \lec \min (y, \frac{1}{\sqrt{y}})$, we have, for $\rho \leq 1$,
\[
\begin{split}
  |\tilde g(\rho)| &= \left| \int_0^\infty J_1(r \rho) g(r) \; r dr \right| \lec
  \int_0^1 \rho r |g(r)| \; r dr + \int_1^{\frac{1}{\rho}} \rho r |g(r)| \; r dr 
  + \int_{\frac{1}{\rho}}^\infty \frac{1}{\sqrt{\rho}} \frac{1}{\sqrt{r}} |g(r)| \; r dr \\
  &\lec \rho \| g \|_{L^1} + \rho \| r^3 g \|_{L^\infty} \int_1^{\frac{1}{\rho}} \frac{dr}{r}
  + \frac{1}{\sqrt{\rho}} \| r^3 g \|_{L^\infty} \int_{\frac{1}{\rho}}^\infty 
  \frac{dr}{r^{\frac{5}{2}}}  \\
  &\lec \| g \|_{L^1} \; \rho + \| r^3 g \|_{L^\infty} \; \rho \log \left( \frac{1}{\rho} \right)
  + \| r^3 g \|_{L^\infty} \; \rho \lec \| g \|_{L^1 \cap r^{-3} L^\infty} \; 
  \rho \log \left( \frac{1}{\rho} \right).
\end{split}
\]
Using this, together with~\eqref{h2}, in the small $\rho$ contribution to~\eqref{FBsplit}
gives
\[
  \left| \int_0^1 \frac{\tilde{g}(\rho) \tilde{h}(\rho)}{\rho^2 + \be^2} \rho d \rho \right|
  \lec  \| g \|_{L^1 \cap r^{-3} L^\infty} 
  \int_0^1 \frac{ \log \left( \frac{1}{\rho} \right) }{\rho^2 + \be^2} \rho d \rho.
\]
By change of variable $\rho = \be y$,
\[
  \int_0^1 \frac{ \log \left( \frac{1}{\rho} \right) }{\rho^2 + \be^2} \rho d \rho
  = \log \left( \frac{1}{\be} \right) \int_0^{\frac{1}{\be}} \frac{y \; dy}{y^2 + 1}
  + \int_0^{\frac{1}{\be}} \log\left( \frac{1}{y} \right)  \frac{y \; dy}{y^2 + 1}
  \lec \log^2 \left( \frac{1}{\be} \right)
\]
which, together with ~\eqref{FBsplit} and~\eqref{largerho}, 
establishes~\eqref{innerbound2}.
\end{proof}

Next we prove the monotonicity estimate Lemma~\ref{monotone}.
\begin{proof}
We may assume $\be^{(2)} \lec \be^{(1)}$.
We have, by resolvent identity,
\begin{equation}  \label{mono1}
\begin{split}
  \gamma(\be^{(1)}) &- \gamma(\be^{(2)}) =
  \left( \be^{(1)} - \be^{(2)} \right) \left( R_0(\be^{(1)}) W h, h \right) +
  \be^{(2)} \left( (\be^{(1)})^2 - (\be^{(2)})^2 \right) \left ( R_0(\be^{(2)})
  R_0(\be^{(1)}) W h, h \right) \\
  &=  \left( \be^{(1)} - \be^{(2)} \right)\left( 4 \log\left( \frac{1}{\be^{(1)}} \right)
  + O(1)  +  \be^{(2)} \left( \be^{(1)} + \be^{(2)} \right) \left(  
  R_0(\be^{(1)}) W h, R_0(\be^{(2)}) h \right) \right)
\end{split}
\end{equation}
using~\eqref{innercomp}.
Using the Fourier-Bessel transform,
\[
  \left( R_0(\be^{(1)}) W h, R_0(\be^{(2)}) h \right) 
  = \int_0^\infty \frac{(Wh)^{\tilde{}}(\rho) \tilde h(\rho) \; \rho \; d \rho}
  {\left(\rho^2 + (\be^{(1)})^2 \right) \left(\rho^2 + (\be^{(2)})^2 \right)} 
\]
Since $r^5 Wh \in L^\infty$, we have $|Wh^{\tilde{}}(\rho)| \lec \rho$
from~\eqref{g0} and~\eqref{g1}. And since 
$\rho \tilde h(\rho) = Wh^{\tilde{}}(\rho)$, we have
$|\tilde h(\rho)| \leq 1/\rho$. Using these above, we have
\[
  \left| \left( R_0(\be^{(1)}) W h, R_0(\be^{(2)}) h \right) \right|
  \lec \int_0^\infty \frac{\rho \; d \rho}
  {\left(\rho^2 + (\be^{(1)})^2 \right) \left(\rho^2 + (\be^{(2)})^2 \right)}.
\]
Using $\rho^2 + (\be^{(2)})^2  \gec \rho \be^{(2)}$ and the change of variable
$\rho = \be^{(1)} y$, we get
\[
  \left| \left( R_0(\be^{(1)}) W h, R_0(\be^{(2)}) h \right) \right|
  \lec \frac{1}{\be^{(1)}}  \frac{1}{\be^{(2)}} \int_0^\infty \frac{dy}{y^2 + 1}
  \lec \frac{1}{\be^{(1)}}  \frac{1}{\be^{(2)}}. 
\]
Inserting this in~\eqref{mono1} we see
\[
  \gamma(\be^{(1)}) - \gamma(\be^{(2)}) =
  \left( \be^{(1)} - \be^{(2)} \right)\left( 4 \log\left( \frac{1}{\be^{(1)}} \right)
  + O(1)  + O \left(\frac{\be^{(2)}}{\be^{(1)}} \right) \right)
  \gec \left( \be^{(1)} - \be^{(2)} \right) \log\left( \frac{1}{\be^{(1)}} \right).
\]
\end{proof}

\subsubsection{Refined free resolvent estimates}
\label{refinement}

First we prove the refined version~\eqref{refined} of the 
$p=0$ case of~\eqref{freeest1} for $f = h \not \in L^2$.
\begin{proof}
Using~\eqref{h2} and~\eqref{highrho},
\[
\begin{split}
  \| R_0(\be) h \|_X &\lec \| \rho \left( R_0(\be) h \right)^{\Tilde{}} \|_{L^2}
  = \left\| \frac{\rho}{\rho^2 + \be^2} \tilde h \right\|_{L^2}
  \lec \left\| \frac{1}{\rho^2 + \be^2} \right\|_{L^2_{\leq 1}} 
  + \left\| \frac{1}{\rho^2 + \be^2} \right\|_{L^\infty_{\geq 1}} \\
  &= \left( \int_0^1 \frac{ \rho d \rho}{(\rho^2 + \be^2)^2} \right)^{\frac{1}{2}}
  + \frac{1}{1 + \be^2} \lec \frac{1}{\be}.
\end{split}
\]
\end{proof}

Next we prove the $L^2$ estimate~\eqref{refined2} for the free
resolvent acting on well-localized functions. 
\begin{proof}
Using the basic Fourier-Bessel estimates 
\[
  |J_1(y)| \lec 1 \; \implies \; |\tilde f (\rho)| \lec \| f \|_{L^1}, \qquad  
  |J_1(y)| \lec y \; \implies \; |\tilde f(\rho)| \lec \rho \| r f \|_{L^1},
\]
we have
\[
\begin{split}
  \| R_0(\be) f \|_{L^2}^2 &= \left \| \frac{\tilde f}{ \rho^2 + \be^2 } \right\|_{L^2}^2
  \lec \| r f \|_{L^1}^2 \int_0^1 \frac{\rho^2\; \rho \; d \rho}{(\rho^2 + \be^2)^2} +
  \| f \|_{L^1}^2 \int_1^\infty \frac{ \rho \; d \rho}{(\rho^2 + \be^2)^2} \\
  &\lec \log \left( \frac{1}{\be} \right) \| r f \|_{L^1}^2 + \| f \|_{L^1}^2,
\end{split}  
\]
and~\eqref{refined2} follows.
\end{proof}

\subsubsection{Estimates for $(H + \be^2)^{-1}$}
\label{fullresolvent}

Since $H + \be^2 = -\tilde \Delta + \be^2 - W$,
\begin{equation}  \label{factor}
  (H + \be^2)^{-1} = (-\tilde \Delta + \be^2 - W)^{-1}
  = \left((-\tilde \Delta + \be^2)(I -R_0(\be) W) \right)^{-1}
  = (I - R_0(\be) W)^{-1} R_0(\be)
\end{equation}
where
\[
   R_0(\be)  = (-\tilde \Delta + \be^2)^{-1} , \qquad \tilde \Delta = \Delta_r - \frac{1}{r^2}
\]
denotes the free resolvent. 

Let us consider first the $\be \to 0$ limit of $I - R_0(\be) W$ 
\[
  (I - R_0(\be)  W) |_{\be = 0} = I - G_0 W
\]
where $ G_0 = (-\tilde \Delta)^{-1}$ has the explicit form 
\[
   G_0 f (r) = \frac{1}{2r} \int_0^r s^2 f(s) ds + \frac{r}{2} \int_r^\infty f(s) ds =
  \int_0^\infty \frac{1}{2} \min \left(\frac{s}{r}, \frac{r}{s}\right) \; f(s) \; s ds.  
\]
We summarize its mapping properties on the space $X$.
For this purpose, it is convenient to consider $X$ as a Hilbert space
with inner-product
\[
  \langle f, \; g \rangle_X := \langle f_r, \; g_r\rangle + \langle \frac{1}{r} f, \; \frac{1}{r} g \rangle 
  = \int_0^\infty \left( f_r(r) g_r(r) + \frac{1}{r^2} f(r) g(r) \right) r dr, 
\]
and to identify the dual space $X^*$ with $X$ via this inner-product. 
\begin{lem} \label{mapping}
We have:
\begin{enumerate}
\item
$I -  G_0 W : X \to X$ is Fredholm;
\item
$I -  G_0 W$ is self-adjoint (with respect to $\langle \cdot, \; \cdot \rangle_X$);
\item
$\ker(I - G_0 W) = \langle h \rangle$;
\item
$ran(I - G_0 W) = X \cap (Wh)^{\perp} := \{ f \in X \; | \; \langle Wh, \; f \rangle = 0 \}$.
\item
$I -  G_0 W : X \cap (r W)^{\perp} \to X \cap (W h)^{\perp}$
is bijective;
\item
$\| r^{p+1} \p_r  G_0 W f \|_{L^2} + \| r^p  G_0 W f \|_{L^2} \lec \| f \|_X,
\quad -2 < p < 0$.
\end{enumerate}
\end{lem}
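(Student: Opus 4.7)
The plan is to treat parts (1)--(5) as the standard Fredholm/spectral package for a compact self-adjoint perturbation of the identity, and (6) as a direct computation using the explicit kernel of $G_0$. Throughout, I will exploit that $W = 2h^2/r^2 = 8/(1+r^2)^2$ is smooth, positive, and rapidly decaying, and the identity $-\tilde\Delta h = Wh$ (equivalent to $Hh = 0$).

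For (1), the central step is to show $K := G_0 W : X \to X$ is compact. Using the embedding $\|f\|_{L^\infty} \lec \|f\|_X$, the explicit kernel $G_0(r,s) = \frac{1}{2}\min(s/r, r/s)$ and the rapid decay of $W$, one derives the pointwise bounds $|Kf(r)| \lec \|f\|_X \min(r,1/r)$ and $|(Kf)_r(r)| \lec \|f\|_X \min(1,1/r^2)$ directly from the integral representation. These control tails uniformly, while on compact subintervals of $(0,\infty)$ Rellich compactness (via the 1D embedding $H^1_{dr} \hookrightarrow C$) yields subsequential uniform convergence of $Kf_n$ and its derivative. A diagonal extraction combined with tail control then gives $X$-norm convergence, hence compactness of $K$ and Fredholmness of $I - K$. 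For (2), integration by parts (justified by the tail bounds above) gives
\[
  \langle G_0 W f, g\rangle_X = \langle G_0 W f, (-\tilde\Delta) g\rangle = \langle Wf, g\rangle,
\]
which is symmetric in $f,g$ since $W$ is a multiplication operator.

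For (3), $(I - G_0 W)f = 0$ is equivalent to $Hf = 0$, a second-order linear ODE whose solution space is two-dimensional; only $h$ (up to scalar) has the regularity at $0$ and the decay at infinity required to lie in $X$, since the second independent solution is singular at one of the endpoints. For (4), the Fredholm alternative together with self-adjointness gives $\operatorname{ran}(I - G_0W) = (\ker)^{\perp_X}$; using $-\tilde\Delta h = Wh$ and integration by parts, $\langle h, f\rangle_X = \langle -\tilde\Delta h, f\rangle = \langle Wh, f\rangle$, identifying $(\ker)^{\perp_X}$ with $X \cap (Wh)^\perp$. For (5), the computation $\langle rW, h\rangle = 2\int_0^\infty h^3 \, dr > 0$ (from $rW \cdot h = 2h^3/r$) shows $h \notin (rW)^\perp$, so any $f \in X$ decomposes uniquely as $f = c h + f'$ with $c = \langle rW, f\rangle/\langle rW, h\rangle$ and $f' \in X \cap (rW)^\perp$; combined with (3) and (4) this yields bijectivity (injectivity because the only element of $\ker$ in $X \cap (rW)^\perp$ is $0$, surjectivity because the decomposition allows the $h$-component to be absorbed into the kernel).

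Finally, for (6), the pointwise bounds from step (1) give
\[
  \|r^p K f\|_{L^2}^2 + \|r^{p+1} (Kf)_r\|_{L^2}^2
  \lec \|f\|_X^2 \left( \int_0^\infty r^{2p+1} \min(r^2, 1/r^2)\, dr + \int_0^\infty r^{2p+3} \min(1, 1/r^4)\, dr \right),
\]
and both integrals converge precisely when $-2 < p < 0$ (the lower bound from behaviour near $r=0$, the upper from behaviour near $r=\infty$). The main technical obstacle is compactness of $K$ in step (1), since $X$ is not a standard Sobolev space on which Rellich's theorem applies directly; once the pointwise tail/regularity bounds above are established the compactness follows, and the remaining parts reduce to classical Fredholm theory together with the explicit form of $G_0$ and the integrability features of $W$.
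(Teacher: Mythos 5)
Your proposal is essentially correct and follows the same structure as the paper for parts (2)--(6). The main point of divergence is in establishing compactness of $G_0 W$ in part (1): the paper factors through $rL^2$ (noting $X \hookrightarrow rL^2$) and shows that $\frac{1}{r}G_0 W r$ and $\partial_r G_0 W$ are Hilbert--Schmidt from $L^2$ to $L^2$ by directly checking square-integrability of their integral kernels, whereas you derive pointwise decay bounds on $G_0 W f$ and $\partial_r G_0 W f$ and combine local (Rellich-type) compactness with uniform tail control and a diagonal extraction. Both are legitimate; your route is slightly longer but has the economy that the pointwise bounds you establish are precisely what one integrates for part (6), so you get (6) essentially for free. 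The paper instead re-derives similar pointwise estimates separately when proving (6).

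One small gap to flag in your compactness step: the $1$D embedding $H^1_{dr} \hookrightarrow C$ gives compactness (hence subsequential local uniform convergence) of $\{Kf_n\}$ itself, but it does not by itself yield convergence of the derivatives $\{(Kf_n)_r\}$, which you need to conclude $X$-norm convergence after cutting off the tails. To recover this, you should invoke the equation $(-\tilde\Delta)(Kf_n) = W f_n$, which gives a uniform bound on $(Kf_n)_{rr}$ on compact subintervals of $(0,\infty)$ (since $\|f_n\|_{L^\infty} \lec \|f_n\|_X$ is bounded and $W$ is smooth there), so that $(Kf_n)_r$ is uniformly bounded in $C^1$ on compacts and Arzelà--Ascoli applies to the derivatives as well. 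With that addition, the argument closes. Also, a minor imprecision in part (3): the second independent solution of $Hf=0$ is in fact singular at \emph{both} endpoints (it behaves like $1/r$ near $0$ and grows like $r$ near $\infty$), but either failure alone already excludes it from $X$, so your conclusion stands.
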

\begin{proof}
\begin{enumerate}
\item
The first statement will follow from the fact that $ G_0 W: r L^2 \to X$ is
compact. We first show $ G_0 W : r L^2 \to r L^2$ is compact. Equivalently, we show
$\frac{1}{r} G_0 W r : L^2 \to L^2$ is compact. Indeed, it is Hilbert-Schmidt,
since its integral kernel is square integrable:
\[
\begin{split}
  \int_0^\infty s ds \int_0^\infty r dr 
  \left(\frac{1}{r} \frac{1}{2}   \min \left(\frac{s}{r}, \frac{r}{s} \right)
  W(s) s  \right)^2 &= \frac{1}{4}
  \int_0^\infty W^2(s) s^2 \; s ds \left( \frac{1}{s^2} \int_0^s r dr + s^2 \int_s^\infty \frac{1}{r^4} r dr
  \right) \\
  & = \frac{1}{4} \int_0^\infty W^2(s) s^2 \; s ds < \infty.
\end{split}   
\]  
A very similar calculation shows that the operator
\[
  \p_r G_0 W:  f (r) \mapsto  -\frac{1}{2 r^2} \int_0^r s W(s) f(s) \; s ds + 
  \frac{1}{2} \int_r^\infty \frac{1}{s} W(s) f(s) \; s ds 
\]
is compact (in fact Hilbert-Schmidt) from $r L^2$ to $L^2$.
\item
Compute, for $f$, $g$ $\in C_0^\infty((0,\infty))$:
\[
\begin{split}
  \langle (I - G_0 W) f, \; g \rangle_X &=
  \langle (I -  G_0 W) f, \; -\tilde \Delta g \rangle =
  \langle f, \; (I - W  G_0 (-\tilde \Delta) g \rangle \\
  &= \langle f, \; -\tilde \Delta (I -  G_0 W) g \rangle 
  = \langle f, \; (I -  G_0 W) g \rangle_X.
\end{split}
\]
\item
This is a computation.
Clearly $ h \in \ker(I -  G_0 W)$, since
\[
  (I -  G_0 W) h = h - h = 0.
\]
Conversely, if $X \ni f =  G_0 W f$, then, since 
$X \subset C((0,\infty))$, 
$f \in C^2((0,\infty))$, and then
\[
  0 = -\tilde \Delta f + W f = H f.
\] 
Since $h$ and $k$ are independent solutions of this second-order ODE,
while $k \not\in X$, it follows that $f \in \langle h \rangle$. 
\item
Since $I -  G_0 W$ is Fredholm, its range is the orthogonal 
complement (with respect to $\langle \cdot, \; \cdot \rangle_X$) of the
kernel of its adjoint. Since
$\ker(I -  G_0 W)^* = \ker(I -  G_0 W) = \langle h \rangle$,
\[
  ran(I -  G_0 W) = \{ f \in X \; | \; 0 = \langle h, \; f \rangle_X =
  \langle -\tilde \Delta h, \; f\rangle  = \langle Wh, \; f\rangle \; \}.
\]
\item
Since we have identified the kernel and range, the last statement follows
from the facts that $r W \in X$, and
\begin{equation}  \label{nonzero}
  \langle r W, \; h \rangle = \int_0^\infty r \frac{h^2(r)}{r^2} h(r) \; r dr =
  \int_0^\infty \frac{h^3}{r} \; r dr = 2 \not= 0.
\end{equation}
\item
Using the explicit form of $ G_0$, and $W(r) \lec (1 + r^2)^{-2}$,
we have
\[
  r \leq 1 \; \implies \; 
  | G_0 W f| \lec \frac{1}{r} \| f \|_{L^\infty} \left( \int_0^r s^2 ds \right)
  + r \| f \|_{L^\infty} \left( \int_0^\infty W(s) ds \right) 
  \lec r \| f \|_{L^\infty},
\] 
while
\[
  r \geq 1 \; \implies \;
   | G_0 W f| \lec \frac{1}{r} \| f \|_{L^\infty} \left( \int_0^\infty s^2 W(s) ds \right)
   + r \| f \|_{L^\infty} \left( \int_r^\infty \frac{1}{s^4} ds \right) \lec \frac{1}{r} \| f \|_{L^\infty},
\]
and so for $-2 < p < 0$,
\[
  \| r^p  G_0 W f \|_{L^2} \lec \| r^p r (1 + r^2)^{-1} \|_{L^2} \| f \|_{L^\infty}
  \lec \| f \|_{L^\infty} \lec \| f \|_X.
\] 
A similar calculation gives, again for $-2 < p < 0$,
\[
  \| r^{p+1} \p_r  G_0 W \|_{L^2} \lec 
  \| r^{p+1} (1 + r^2)^{-1} \|_{L^2} \| f \|_{L^\infty} \lec \| f \|_{L^\infty} \lec \| f \|_X.
\] 
\end{enumerate}
\end{proof}
We will therefore denote by $(I - G_0 W)^{-1}$ the (bounded)
inverse operator
\begin{equation}  \label{invdef}
  (I -  G_0 W)^{-1} : X \cap (hW)^{\perp} \to X \cap (r W)^{\perp}.
\end{equation}
The particular choice $(r W)^{\perp}$ of subspace for the 
range of the inverse is what allows for estimate~\eqref{resest1} below.
 
We return now to consider $I - R_0(\be)  W$ for $\be > 0$.
We first confirm that it is indeed invertible on $X$ -- essentially just 
by manipulating the factorization~\eqref{factor}, and exploiting the
fact that $H \geq 0$, and hence $H + \be^2$ is invertible:  
\begin{lem}
For $\be > 0$, $I - R_{0}(\be) W : X \to X$
is bounded and bijective, hence 
\[
  (I - R_0(\be)  W)^{-1} : X \to X
\]
exists (and is bounded).
\end{lem}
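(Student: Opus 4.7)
The plan is to leverage the factorization $H + \be^2 = (-\tilde\Delta + \be^2)(I - R_0(\be) W)$ recorded in \eqref{factor}. A short algebraic rearrangement motivates
\[
  (I - R_0(\be) W)^{-1} = I + (H + \be^2)^{-1} W ,
\]
which makes invertibility plausible since $H + \be^2 \geq \be^2 > 0$ on $L^2$; the actual proof reduces to verifying three ingredients on $X$: boundedness, compactness of $R_0(\be) W$, and injectivity.

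For boundedness of $I - R_0(\be) W : X \to X$ it suffices to bound $R_0(\be) W$. Because $W = 2h^2/r^2 = 8(1+r^2)^{-2}$ is smooth and rapidly decaying, $\| r W f \|_{L^2} \lec \| f \|_{L^\infty} \lec \| f \|_X$ by the embedding \eqref{embedding}, and then the $p=1$ case of \eqref{freeest1} gives $\| R_0(\be) W f \|_X \lec \| f \|_X$. Next I would show $R_0(\be) W : X \to X$ is compact, so that $I - R_0(\be) W$ is Fredholm of index zero and bijectivity reduces to injectivity via the Fredholm alternative. One route is to repeat the Hilbert--Schmidt calculation from Lemma~\ref{mapping}(1), using that the integral kernel of $R_0(\be)$ is pointwise dominated by that of $G_0$ (with additional exponential decay that only helps). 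Alternatively, the resolvent identity $R_0(\be) = G_0 - \be^2 R_0(\be) G_0$ expresses $R_0(\be) W = G_0 W - \be^2 R_0(\be) G_0 W$ as compact plus (bounded)$\circ$(compact), using compactness of $G_0 W$ from Lemma~\ref{mapping}(1) and boundedness of $R_0(\be)$ on $X$ from \eqref{freeest2}.

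The main obstacle is injectivity. Suppose $f \in X$ satisfies $f = R_0(\be) W f$. Elliptic regularity gives $f \in C^\infty((0,\infty))$ and $(H + \be^2) f = 0$ pointwise. The plan is to promote $f \in X$ to $f \in L^2$, after which the factorization $H = F^* F$ yields the energy identity $\|F f\|_{L^2}^2 + \be^2 \|f\|_{L^2}^2 = 0$ and hence $f \equiv 0$. The promotion rests on ODE asymptotic analysis of $(H + \be^2) f = 0$ at both endpoints: the indicial roots at $r = 0$ are $\pm 1$, and $f/r \in L^2$ near zero rules out the singular $r^{-1}$ branch, leaving the regular $f \sim c_0 r$; at $r = \infty$, $W$ and $1/r^2$ are negligible and solutions limit to modified Bessel functions of order $1$, with the growing branch $I_1(\be r) \sim e^{\be r}/\sqrt{r}$ incompatible with $f_r, f/r \in L^2$, forcing $f$ to decay like $K_1(\be r) \sim e^{-\be r}/\sqrt{r}$. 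Combined with smoothness on compact subintervals this places $f$ (and $Ff$) in $L^2$ and justifies the integration by parts closing the argument.

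Once boundedness, the Fredholm property, and injectivity are in hand, the open mapping theorem (or Fredholm alternative) gives that $(I - R_0(\be) W)^{-1} : X \to X$ exists and is bounded. The delicate part is the endpoint ODE analysis together with justifying $\langle F^* F f, f \rangle = \|F f\|_{L^2}^2$ at the regularity level of $X$ rather than the operator domain of $H$; every other step is soft once that is in place.
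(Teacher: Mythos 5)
Your argument is correct in outline, but it is substantially more labor-intensive than the paper's, and in one place you work hard on something that is free. The paper skips compactness and the Fredholm alternative entirely: it writes down the candidate inverse $I + (H+\be^2)^{-1}W$ explicitly (the formula you mention but set aside), notes that $H\geq 0$ self-adjoint on $L^2$ makes $(H+\be^2)^{-1}:L^2\to D(H)\subset X$ bounded and $Wf\in L^2$ for $f\in X$, and then verifies $(I-R_0(\be)W)\bigl(f+(H+\be^2)^{-1}Wf\bigr)=f$ by pure algebra with the resolvent identity. Surjectivity is thus constructive, and injectivity is a one-liner of the same flavor: if $(I-R_0(\be)W)f=0$ with $f\in X$, then $Wf\in L^2$, so $f=R_0(\be)Wf$ already lies in $D(R_0(\be))=D(H)$, and $0=R_0(\be)(H+\be^2)f$ forces $f=0$ by invertibility of both factors. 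This last observation is exactly what makes your ``delicate part'' a non-issue: the promotion from $X$ to $L^2$ (and to the operator domain) is immediate from $f=R_0(\be)Wf$, so there is no need for the ODE endpoint analysis with indicial roots and Bessel asymptotics, nor for justifying the quadratic form identity at marginal regularity. Your Fredholm-plus-compactness route would also succeed (the resolvent-identity version of the compactness argument is clean), and it does generalize more readily to situations where no explicit positive Schr\"odinger operator is available; but here the explicit factorization $H+\be^2=(-\tilde\Delta+\be^2)(I-R_0(\be)W)$ is precisely what the lemma is exploiting, and the paper's proof is both shorter and less dependent on auxiliary estimates.
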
 
\begin{proof}
Boundedness follows from
\[
  f \in X \; \implies \; \| R_0(\be)  W f \|_X \lec \| r W f \|_{L^2} \lec \| f \|_{L^\infty}
  \lec \| f \|_X, 
\] 
using~\eqref{freeest1} with $p=1$.

For surjectivity: given $f \in X$, set
\[
  g := f + (H+\be^2)^{-1} W f.
\]
As a self-adjoint operator on $L^2$, $H \geq 0$, and so
$(H + \be^2)^{-1} : L^2 \to D(H) \subset X$ boundedly. 
Since $f \in X \subset L^\infty$, $W f \in L^2$, and hence $g \in X$.
And just compute
\[
\begin{split}
  (1 - R_0(\be)  W) g &= f - R_0(\be)  W f + (H + \be^2)^{-1} W f
  - R_0(\be)  (-H - \be^2 - \tilde \Delta + \be^2)(H + \be^2)^{-1} W f \\
  &=  f - R_0(\be)  W f + (H + \be^2)^{-1} W f + R_0(\be)  W f
  - (H + \be^2)^{-1} W f = f.
\end{split}
\]

For injectivity: if $f \in X$ satisfies $(1 - R_0(\be)  W) f = 0$, then
since $W f \in L^2$,
$f = R_0(\be)  W f \; \in D(R_0(\be) ) = D(H)$, and
\[
  0 = f - R_0(\be)  ( - H - \be^2 - \tilde \Delta + \be^2) f = R_0(\be) (H + \be^2) f,
\]
and so $f = 0$ by the invertibility of $ R_{0}(\be)$ and $H + \be^2$.
\end{proof}

The key result is:
\begin{prop} \label{bound}
If $f \in X \cap (Wh)^{\perp}$, then for $\be$ sufficiently small,
\begin{equation}  \label{resest1}
  \| (1 - R_0(\be)  W)^{-1} f - (1 - G_0 W)^{-1} f \|_X \lec 
  \frac{1}{\log\left( \frac{1}{\be} \right)} \| f \|_X,
\end{equation}
and so in particular
\begin{equation}  \label{resest2}
  \| (1 - R_0(\be)  W)^{-1} f \|_X \lec \| f \|_X
\end{equation}
(uniformly in $\be$), and we have~\eqref{basic}:
\[
  g \perp R_0(\be) Wh \;\; \implies \;\;
  \| (H + \be^2)^{-1} g \|_X \lec \| R_0(\be)  g \|_{X}.
\]
\end{prop}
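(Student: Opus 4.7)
The plan is to first reduce~\eqref{basic} to~\eqref{resest2} via the factorization~\eqref{factor1}: writing $(H+\be^2)^{-1} g = (I - R_0(\be) W)^{-1} R_0(\be) g$, self-adjointness of $R_0(\be)$ converts the hypothesis $g \perp R_0(\be) Wh$ into $R_0(\be) g \perp Wh$, so~\eqref{resest2} then yields the claim. In turn~\eqref{resest2} is an immediate consequence of~\eqref{resest1} together with the boundedness of $(I - G_0 W)^{-1} : X \cap (Wh)^\perp \to X \cap (rW)^\perp$ from Lemma~\ref{mapping}. The core task is therefore~\eqref{resest1}.

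For~\eqref{resest1}, given $f \in X \cap (Wh)^\perp$, set $u := (I - R_0(\be)W)^{-1} f$ and $u_0 := (I - G_0 W)^{-1} f$. The splitting $X = \langle h\rangle \oplus (rW)^\perp$ (valid because $\langle h, rW\rangle \neq 0$ by~\eqref{nonzero}) lets me decompose $u - u_0 = ah + v$ with $v \perp rW$ and $a \in \R$. Using the resolvent identity $R_0(\be) - G_0 = -\be^2 R_0(\be) G_0$ together with $G_0 W u_0 = u_0 - f$ and $G_0 W h = h$, the equation $(I - R_0(\be)W)(u_0 + ah + v) = f$ collapses to
\begin{equation*}
  (I - R_0(\be) W) v = \be^2 R_0(\be) (f - u_0 - a h),
\end{equation*}
reducing the problem to showing $|a| + \|v\|_X \lec \|f\|_X / \log(1/\be)$.

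The value of $a$ is fixed by projecting onto the cokernel direction $Wh$. Using $R_0(\be) Wh = h - \be^2 R_0(\be) h$ one verifies $\langle (I - R_0(\be)W) v, Wh\rangle = \be^2 \langle W v, R_0(\be) h\rangle$, so pairing the reduced equation with $Wh$ and dividing by $\be^2$ gives, via~\eqref{innercomp},
\begin{equation*}
  a\bigl(4\log(1/\be) + O(1)\bigr) = \langle f - u_0, R_0(\be) Wh\rangle - \langle W v, R_0(\be) h\rangle.
\end{equation*}
The first pairing on the right is $O(\|f\|_X)$ (expanding $R_0(\be)Wh = h - \be^2 R_0(\be) h$ and using~\eqref{refined} together with $\|u_0\|_X \lec \|f\|_X$ from Lemma~\ref{mapping}), and Lemma~\ref{key} bounds the second by $\log(1/\be) \cdot \|v\|_X$. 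To produce an equation for $v$ alone, I rewrite the PDE via $(I - R_0(\be)W) = (I - G_0 W) - \be^2 R_0(\be) G_0 W$ and invert $(I - G_0 W)$ through Lemma~\ref{mapping} on $X \cap (Wh)^\perp$ (both sides of the rewritten equation lie there), producing a fixed-point formulation
\begin{equation*}
  v = (I - G_0 W)^{-1} \Bigl(\be^2 R_0(\be)(f - u_0 - a h) + \be^2 R_0(\be) G_0 W v\Bigr),
\end{equation*}
whose source is of order $\be(1 + |a|)$ in $X$ by~\eqref{refined}, and whose $v$-dependent term is a genuine contraction for small $\be$.

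The main obstacle is the circular interdependence between the estimates for $a$ and $v$: bounding $a$ requires $\|v\|_X$, and bounding $\|v\|_X$ requires $|a|$. The resolution is that the $a$-contribution $\be^2 R_0(\be)(a h)$ to the source for $v$ has $X$-norm of order $|a|\be$ by~\eqref{refined}, which is strictly subleading to $|a|$, so the bootstrap closes: one first obtains $\|v\|_X \lec \be(\|f\|_X + |a|)$, then $|a| \lec \|f\|_X/\log(1/\be) + \|v\|_X$, and together these give $|a| + \|v\|_X \lec \|f\|_X/\log(1/\be)$, as required. A secondary technical wrinkle is that the common $1/r$ decay of $f - u_0$ and $h$ makes certain naive pairings logarithmically divergent; these must be handled by keeping resolvents intact (for instance not splitting $R_0(\be) Wh$ through $h$ when testing against slowly-decaying functions) and relying on the convergent forms provided by Lemma~\ref{key}.
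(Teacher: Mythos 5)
Your overall strategy — decompose $u - u_0 = ah + v$ with $v \perp rW$, project the reduced equation onto the cokernel direction $Wh$ to isolate $a$, then close a fixed-point argument for $v$ and bootstrap — is structurally the same as the paper's (where $\mu$ plays the role of your $a$). The paper just organizes the projection differently, pairing against $rW$ and exploiting $\langle Wh, f\rangle = 0$ through the identity $\langle Wh, (1 - R_0(\be)W)\hat g\rangle = \be^2\langle R_0(\be)h, W\hat g\rangle$, then applying~\eqref{choice} to $W\hat g$. Your derivation of the projected equation and the identity $\langle (I - R_0(\be)W)v, Wh\rangle = \be^2\langle Wv, R_0(\be)h\rangle$ are both correct, and the bootstrap closes as you describe.

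However, there is a genuine gap at the step $\langle f - u_0, R_0(\be)Wh\rangle = O(\|f\|_X)$. The argument you give — ``expanding $R_0(\be)Wh = h - \be^2 R_0(\be)h$ and using~\eqref{refined}'' — does not work. The first piece $\langle f - u_0, h\rangle$ is generically divergent: $f - u_0 = -G_0 W u_0$ and $h$ both decay only like $1/r$, and $f/r \in L^2$ does not give $f \in L^1_{dr}$ near infinity (e.g.\ $f \sim (r\sqrt{\log r})^{-1}$ lies in $X$ but $\int f \, dr = \infty$). The second piece $\be^2\langle f - u_0, R_0(\be)h\rangle$ cannot be controlled by~\eqref{refined} either: that estimate bounds $\|R_0(\be)h\|_X$, but the duality pairing needs $\|r R_0(\be)h\|_{L^2}$, which is infinite since $R_0(\be)h \sim C(\be)/r$. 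Your ``secondary technical wrinkle'' paragraph actually names this very problem, but the primary argument then does precisely the forbidden expansion.

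The estimate \emph{is} true, but it requires the orthogonality $u_0 \perp rW$ in an essential way. Writing $f - u_0 = -G_0 W u_0$ and using $G_0 R_0(\be) W h = R_0(\be) G_0 W h = R_0(\be) h$, one gets $\langle f - u_0, R_0(\be) Wh\rangle = -\langle W u_0, R_0(\be) h\rangle$. Now $W u_0 \in L^1 \cap r^{-4}L^\infty$, so~\eqref{choice} applies and yields $\langle W u_0, R_0(\be)h\rangle = \langle rW, u_0\rangle\log(1/\be) + O(\|u_0\|_{L^\infty})$; the logarithmically large leading term vanishes \emph{precisely because} $\langle rW, u_0\rangle = 0$, leaving $O(\|f\|_X)$. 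Without this cancellation you only get $O(\log(1/\be)\|f\|_X)$, which is not enough. This is the key structural input you omitted, and it mirrors the role that $f \perp Wh$ plays in the paper's argument (both orthogonalities conspire to kill the $\log(1/\be)$ from~\eqref{choice}). A similar improvement using $v \perp rW$ with~\eqref{choice} instead of~\eqref{innerbound} removes the logarithm from your second pairing too, though your cruder bound turns out to be harmless in the final bootstrap.
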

\begin{proof}
Set
\[
  g := (1 - R_0(\be)  W)^{-1} f - (1 -  G_0 W)^{-1} f \; \in X
\]
so that, using a resolvent identity,
\[
\begin{split}
  (1 -  G_0 W) g &= (1 - R_0(\be)  W + (R_0(\be)  - G_0) W )
  (1 - R_0(\be)  W)^{-1} f - f \\ &=
  - \be^2 R_0(\be)  G_0 W (1 - R_0(\be)  W )^{-1} f \\
  &= - \be^2 R_0(\be)  G_0 W \left( g +  (1 - G_0 W)^{-1} f \right).
\end{split}
\]
Evidently, the right hand side lies in 
$Ran (1 -  G_0 W ) = X \cap (W h)^{\perp}$, so using~\eqref{invdef}
and Lemma~\ref{mapping},
\begin{equation}  \label{g}
  g = -\be^2 (1 -  G_0 W)^{-1} 
  R_0(\be)  G_0 W \left( g +  (1 -  G_0 W)^{-1} f \right)
  + \mu \; h
\end{equation}
for some $\mu \in \R$, and so
\[
  \| g \|_X \lec \be^2 
  \|  R_0(\be)   G_0 W \left( g +  (1 -  G_0 W)^{-1} f \right) \|_X
  + |\mu|.
\]
Then using~\eqref{freeest3} for any $-1 < p < 0$, the last statement of
Lemma~\ref{mapping}, and~\eqref{invdef},
\[
  \| g \|_X \lec \be^2 \; \be^{p-1}  \left( \| g \|_X + \| f \|_X \right) + |\mu|
\]
and so for $\be$ small enough,
\begin{equation}  \label{g2}
  \| g \|_X \lec \be^{1+p} \| f \|_X + |\mu|.
\end{equation}
Finally, we bound $\mu$ by taking the ($L^2$) inner-product of \eqref{g}
with $r W$, and using~\eqref{invdef}, 
\[
  \langle rW, \; h \rangle \; \mu = -\langle rW, \; g\rangle = -\langle rW, \; (1 - R_0(\be)  W)^{-1} f \rangle 
\]
and so by~\eqref{nonzero},
\[
  | \mu | \lec |(r W, \; \hat g)|, \qquad 
  \hat g := (1 - R_0(\be)  W)^{-1} f  = g + (1 -  G_0 W)^{-1} f.
\]
Now since $f \in X \cap (W h)^{\perp}$, using the relation
$ G_0 (W h) = h$, a resolvent identity, and estimate~\eqref{choice},
\[
\begin{split}
  0 &= \langle  Wh, \; f \rangle = \langle Wh, \; (1 - R_0(\be)  W) \hat g \rangle  =
  \langle (1 - W R_0(\be) ) Wh, \; \hat g \rangle \\
  &= \langle (1 - W G_0) Wh + W(  G_0 - R_0(\be)  ) W h, \; \hat g \rangle  \\
  &= \be^2 \langle  W R_0(\be)  G_0 W h, \; \hat g \rangle
  = \be^2 \langle  R_0(\be)  h, \; W \hat{g}\rangle \\
  &= \be^2 \left( \langle r W, \; \hat g \rangle \log \left( \frac{1}{\be} \right) 
  + O(\| \hat g \|_{L^\infty})   \right)
\end{split}
\]
from which follows
\[
  | \mu| \lec |\langle r W, \; \hat g\rangle | \lec \frac{1}{\log(1/\be)} \| \hat g \|_{X}
  \lec \frac{1}{\log(1/\be)} \left( \| g \|_{X} + \| f \|_X \right) .
\]
Returning to~\eqref{g2}, we have (for any $-1 < p < 0$),
\[
  \| g \|_X \lec \be^{1 + p} \| f \|_X + \frac{1}{\log(1/\be)} \left( \| g \|_{X} + \| f \|_X \right),
\]
and~\eqref{resest1} follows.

Finally,~\eqref{resest2} is a consequence of~\eqref{resest1} and~\eqref{invdef},
and then~\eqref{basic} follows from the factorization~\eqref{factor}.
\end{proof}

\subsubsection{Estimates for $(\hat H + \be^2)^{-1}$}
\label{fullresolvent'}

Here we prove the estimates for the resolvent  $(\hat H + \be^2)^{-1}$
used in Section~\ref{regests} for the higher-regularity estimate.

First we prove Proposition~\ref{L1}, by comparing $(\hat H + \be^2)^{-1}$
to $\hat H^{-1}$.
\begin{proof}  
Since $\hat H \geq 0$, 
$\hat H + \be^2$ is an invertible operator on $L^2$, so
\[
  \zeta := (\hat H + \be^2)^{-1} f \in D(\hat H) \subset L^2
\]
exists, and
\begin{equation} \label{hatH2}
  (\hat{H} + \beta^2) \zeta =  f.
\end{equation}
Note that by second-order ODE regularity,
$f \in C((0,\infty)) \; \implies \; \zeta \in C^2((0,\infty))$,
so~\eqref{hatH2} holds for all $r \in (0,\infty)$, and
$D(\hat H) \subset X \subset L^\infty$ shows that 
$\zeta$ is also bounded.

We will show~\eqref{L1est} by comparing $(\hat H + \be^2)^{-1}$ to 
$\hat H^{-1}$, which has a simple explicit form. Indeed,
it is easily checked that 
\[
  u(r) = \frac{2}{r h(r)} = \frac{r^2 + 1}{r^2}, \qquad
  v(r) = \frac{ (r^2 + 1) \ln(r^2 + 1)}{r^2} - 1
\]
are homogeneous solutions: $\hat H u = \hat H v = 0$.
It is also easily checked that:
\begin{equation}  \label{uvprops}
  u(r) > 0, \quad v(r) > 0, \qquad
  |u(r)| \lesssim \left\{ \begin{array}{lr}
  \frac{1}{r^2} &  r \to 0 \\ 1 & r \to \infty \end{array}  \right. ,
  \qquad |v(r)| \lesssim \left\{ \begin{array}{lr}
   r^2 &   r \to 0 \\ \log r  &  r \to \infty \end{array} \right. . 
 \end{equation}
Since $f \in L^1 \cap C((0,\infty))$, by~\eqref{uvprops},
\begin{equation}  \label{hatHinv}
  ( \hat H^{-1} f )(r) := 
  u(r)\int_{0}^{r}v(s)f(s) \; s ds + v(r) \int_{r}^{\infty} u(s)f(s) \; s ds 
\end{equation}
is well-defined for $r \in (0,\infty)$, and by the variation of parameters formula, 
$\hat H^{-1} f \in C^2((0,\infty))$ satisfies
\begin{equation}  \label{hatH}
  \hat{H} ( \hat H^{-1} f ) = f 
\end{equation}
for $r \in (0,\infty)$.

\begin{lem} \label{positivity}
If $0 \leq f \in L^1 \cap L^2 \cap C((0,\infty))$, then
$\hat H^{-1} f \geq 0$ and $(\hat H^{-1} + \be^2)^{-1} f \geq 0$.
\end{lem}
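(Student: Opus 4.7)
The first assertion is immediate from the explicit representation~\eqref{hatHinv}: the integral kernel $K(r,s) = u(r_<)\,v(r_>)$ is built from the two homogeneous solutions $u, v$, both of which are positive on $(0,\infty)$ by~\eqref{uvprops}. Integrating the nonnegative $f$ against this nonnegative kernel yields $\hat H^{-1} f \geq 0$ pointwise.

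For the second assertion, which in the subsequent proofs of Propositions~\ref{L1} and~\ref{L2prop} plays the role of the resolvent $(\hat H + \be^2)^{-1}$, my plan is a weak maximum principle for the radial Schr\"odinger ODE. Setting $\phi := (\hat H + \be^2)^{-1} f$, the self-adjointness of $\hat H \geq 0$ on $L^2$ makes $\hat H + \be^2$ invertible and places $\phi \in D(\hat H)$; second-order ODE regularity together with $f \in C((0,\infty))$ promotes $\phi$ to $C^2((0,\infty))$ satisfying the pointwise equation
\[
  -\phi'' - \tfrac{1}{r}\phi' + (\hat V + \be^2)\phi = f \quad \text{on } (0,\infty),
\]
with boundary values $\phi(0^+) = 0 = \phi(\infty)$ inherited from the domain: the inverse-square singularity $\hat V \sim 4/r^2$ forces $r^2$-behavior at the origin, while $L^2$ decay combined with the asymptotic ODE $-\phi'' + \be^2 \phi \approx 0$ at infinity forces exponential decay.

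If $\phi$ were negative somewhere, then since $\phi$ vanishes at both endpoints it would attain a negative global minimum at some interior $r_0 \in (0,\infty)$; there $\phi'(r_0) = 0$ and $\phi''(r_0) \geq 0$, so the ODE would give
\[
  f(r_0) = -\phi''(r_0) + (\hat V(r_0) + \be^2)\phi(r_0) \leq (\hat V(r_0) + \be^2)\phi(r_0) < 0,
\]
using positivity of $\hat V + \be^2$. This contradicts $f \geq 0$, so $\phi \geq 0$. The main obstacle in this plan is justifying the boundary behaviors $\phi(0^+) = 0 = \phi(\infty)$ rigorously from the abstract domain membership $\phi \in D(\hat H)$, which is classical for radial Schr\"odinger operators with inverse-square singular potential but deserves careful exposition.
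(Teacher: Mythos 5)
Your proposal is correct and follows essentially the same route as the paper: kernel positivity of $\hat H^{-1}$ from the explicit formula~\eqref{hatHinv} with $u,v>0$, and a maximum-principle argument at an interior negative minimum for $(\hat H + \be^2)^{-1}f$. The boundary issue you flag is handled in the paper more simply: $\zeta=(\hat H+\be^2)^{-1}f \in D(\hat H)\subset X$, and functions in $X$ are continuous, bounded, and vanish as $r\to 0+$ and $r\to\infty$ (by the same integration-by-parts argument behind~\eqref{embedding}), so no refined ODE asymptotics at the endpoints are needed.
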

\begin{proof}
The non-negativity of $\hat H^{-1} f$ follows immediately from the
expression~\eqref{hatHinv}, and the positivity of $u$ and $v$ (see~\eqref{uvprops}).
Non-negativity of $\zeta := (\hat H + \be^2)^{-1} f$ follows from the maximum principle:
since $\zeta \in D(\hat H) \subset X$, 
it is bounded and vanishes as $r \to 0+$ and $r \to \infty$.
Using the equation~\eqref{hatH2}, if for some $\hat r \in (0,\infty)$,
$0 > \min_r \zeta(r) = \zeta(\hat r)$, then
\[
  0 \leq f( \hat r) = \left( (\hat{H} + \beta^2) \zeta \right)(\hat r)
   = -\Delta_r \zeta (\hat r) + \hat{V}(\hat r) \zeta(\hat{r}) < 0,
\]
since $\hat{V} > 0$, a contradiction . 
\end{proof}
\begin{lem}
\begin{equation}  \label{boundL1}
  \left| \left( (\hat{H} + \beta^2 )^{-1} f \right)(r) \right| 
  \le \left( \hat{H}^{-1} | f | \right) (r).
\end{equation}
\end{lem}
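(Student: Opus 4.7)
The plan is to reduce the pointwise estimate to the case of nonnegative data, and then compare the two resolvents on positive data by exploiting the resolvent identity together with the positivity statements already provided by Lemma~\ref{positivity}.

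First, I would write $f = f_+ - f_-$ with $f_\pm := \max(\pm f, 0) \geq 0$, noting that $f_\pm$ inherit the hypotheses $L^{1,\log} \cap L^2 \cap C((0,\infty))$ from $f$. By linearity of $(\hat H + \be^2)^{-1}$ and Lemma~\ref{positivity}, both $(\hat H + \be^2)^{-1} f_\pm$ are nonnegative, so the triangle inequality yields
\[
  \left| (\hat H + \be^2)^{-1} f \right| \leq (\hat H + \be^2)^{-1} f_+ + (\hat H + \be^2)^{-1} f_- = (\hat H + \be^2)^{-1} |f|.
\]
Similarly, $\hat H^{-1} |f| = \hat H^{-1} f_+ + \hat H^{-1} f_-$ by linearity of the integral formula~\eqref{hatHinv}. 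So it suffices to prove, for any $g \geq 0$ in the same function class, the comparison $(\hat H + \be^2)^{-1} g \leq \hat H^{-1} g$ pointwise.

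For the comparison, set $\zeta_\be := (\hat H + \be^2)^{-1} g$ and $\zeta_0 := \hat H^{-1} g$, both $\geq 0$ by Lemma~\ref{positivity}. The resolvent identity (applied formally and then justified via a direct verification using~\eqref{hatH}) gives
\[
  \zeta_0 - \zeta_\be = \hat H^{-1}\left[ (\hat H + \be^2)\zeta_\be - \hat H \zeta_\be \right] = \be^2 \; \hat H^{-1} \zeta_\be.
\]
The integral kernel in formula~\eqref{hatHinv} is $K(r,s) = u(r) v(s)$ for $s \leq r$ and $v(r) u(s)$ for $s \geq r$; since $u, v > 0$ by~\eqref{uvprops}, $K \geq 0$, so $\hat H^{-1}$ applied to the nonnegative function $\zeta_\be$ gives a nonnegative result. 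Hence $\zeta_0 - \zeta_\be \geq 0$ pointwise, proving~\eqref{boundL1}.

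The main obstacle is the technical justification that $\hat H^{-1}$ can be applied to $\zeta_\be$ in the sense of~\eqref{hatHinv}, i.e., that the integrals $\int_0^r v(s) \zeta_\be(s) s\,ds$ and $\int_r^\infty u(s) \zeta_\be(s) s\,ds$ converge. One clean way around this is to argue instead by a maximum-principle comparison: the difference $w := \zeta_0 - \zeta_\be$ satisfies $\hat H w = \be^2 \zeta_\be \geq 0$, is continuous on $(0,\infty)$, vanishes as $r \to 0+$ (both $\zeta_0$ and $\zeta_\be$ do, by the regularity forced by the $4/r^2$ behaviour of $\hat V$ near $0$), and has nonnegative limit as $r \to \infty$ (since $\zeta_\be \to 0$ by the $\be^2$ coercivity, while $\zeta_0 \to \int_0^\infty v(s) g(s) s\,ds \geq 0$ using $g \in L^{1,\log}$ and the decay properties in~\eqref{uvprops}). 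If $w$ attained a negative minimum at some $\hat r \in (0,\infty)$, then $-\Delta_r w(\hat r) \geq 0$ while $\hat V(\hat r) w(\hat r) < 0$, contradicting $\hat H w(\hat r) \geq 0$. Hence $w \geq 0$ everywhere, giving the required inequality.
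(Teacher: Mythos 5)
Your overall strategy is the same as the paper's: split $f = f_+ - f_-$, reduce to comparing $(\hat H + \be^2)^{-1}$ with $\hat H^{-1}$ on nonnegative data, and prove $(\hat H + \be^2)^{-1} g \leq \hat H^{-1} g$ via positivity/maximum-principle. The only cosmetic difference is that the paper applies $(\hat H + \be^2)$ to the difference $\hat H^{-1} f^\pm - (\hat H + \be^2)^{-1} f^\pm$, getting $\be^2 \hat H^{-1} f^\pm \geq 0$, and then quotes its positivity lemma for $(\hat H + \be^2)^{-1}$, whereas you apply $\hat H$ to the difference $w := \zeta_0 - \zeta_\be$, getting $\hat H w = \be^2 \zeta_\be \geq 0$, and re-run a maximum principle directly. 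Both routes are legitimate, and your ``Route A'' (resolvent identity plus kernel positivity) is also a correct alternative modulo the integrability caveat you flag yourself.

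However, there is a sign error in your maximum-principle step that, as written, breaks the contradiction. At an interior \emph{minimum} $\hat r$ of $w$ one has $w_r(\hat r) = 0$ and $w_{rr}(\hat r) \geq 0$, so $\Delta_r w(\hat r) = w_{rr}(\hat r) + \frac{1}{\hat r} w_r(\hat r) \geq 0$, hence $-\Delta_r w(\hat r) \leq 0$, not $\geq 0$. With your sign, $\hat H w(\hat r) = -\Delta_r w(\hat r) + \hat V(\hat r) w(\hat r)$ is the sum of a nonnegative and a negative quantity, which need not be negative — no contradiction. With the corrected sign, both terms are $\leq 0$ and the second is strictly negative, so $\hat H w(\hat r) < 0$, contradicting $\hat H w(\hat r) = \be^2 \zeta_\be(\hat r) \geq 0$. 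The fix is a one-line change, but you should make it: this is exactly the point the contradiction hinges on.
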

\begin{proof}
Write $f(r) = f^{+}(r) - f^{-}(r)$, $f^{+} := \max\{f, 0\}$, $ f^{-} := \max\{-f, 0\}$,
$f^{\pm} \in L^1 \cap L^2 \cap C((0,\infty))$. Set
\[
  \zeta^{\pm} := (\hat H + \be^2)^{-1} f^{\pm} \in D(\hat H)  \cap C^2((0,\infty)), \qquad
  \hat \zeta^{\pm} := H^{-1} f^{\pm}.
\] 
By Lemma~\ref{positivity}, $\hat \zeta^{\pm} \geq 0$
and $\zeta^{\pm} \geq 0$.
Then using the equations~\eqref{hatH2} and~\eqref{hatH}, 
\[
  (\hat H + \be^2)(\hat \zeta^{\pm} - \zeta^{\pm}) =
  f^{\pm} + \be^2 \hat \zeta^{\pm} - f^{\pm} = \be^2 \hat \zeta^{\pm} \geq 0,
\]
and so by Lemma~\ref{positivity} again,
$\hat \zeta^{\pm} - \zeta^{\pm} \geq 0$. That is,
\[
\hat{H}^{-1} f^{\pm} \ge (\hat{H} + \beta^2)^{-1} f^{\pm} \ge 0,
\] 
and so
\[
\begin{split}
  |(\hat{H} + \beta^2)^{-1}f | &= |(\hat{H} + \beta^2)^{-1}(f^{+} - f^{-})| \le
  (\hat{H} + \beta^2)^{-1}f^{+} + (\hat{H} + \beta^2)^{-1} f^{-} \\
  &\le \hat{H}^{-1} f^{+} + \hat H^{-1} f^{-} = \hat{H}^{-1}(f^+ + f^-)
  = \hat H^{-1} |f|,
\end{split}
\]
which is~\eqref{boundL1}.
\end{proof}

It follows from the pointwise bound~\eqref{boundL1} that
\begin{equation}  \label{relate}
  \| (\hat{H} + \be)^{-1} f \|_{L^\infty \cap r^2 L^1_{\leq 1}} \lesssim  
  \|  \hat{H}^{-1} f \|_{L^\infty \cap r^2 L^1_{\leq 1}},
\end{equation}
and so it remains to consider $\hat H^{-1}$.
\begin{lem}
\begin{equation} \label{hatHinvbound}
  \| \hat H^{-1} f \|_{L^\infty \cap r^2 L^1_{\leq 1}}  \lesssim  
  \| f \|_{L^{1,\log}}
\end{equation}
\end{lem}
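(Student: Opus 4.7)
The plan is to work from the explicit variation-of-parameters formula \eqref{hatHinv} for $\hat H^{-1}$, combined with the pointwise asymptotics \eqref{uvprops} for the homogeneous solutions $u$ and $v$. Writing
\[
  (\hat H^{-1} f)(r) = u(r) I_-(r) + v(r) I_+(r),
  \qquad I_-(r) := \int_0^r v(s) f(s) \, s\,ds,
  \qquad I_+(r) := \int_r^\infty u(s) f(s) \, s\,ds,
\]
the norm on the left of~\eqref{hatHinvbound} decomposes into four integral estimates (two from the $L^\infty$ part, two from the $r^2 L^1_{\leq 1}$ part), each of which I would control by inserting the asymptotics $u(r) \sim r^{-2}$, $v(r) \sim r^2$ as $r \to 0$ and $u(r) \sim 1$, $v(r) \sim \log(2+r)$ as $r \to \infty$, then splitting every integration range at the crossover value $1$.

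For the $L^\infty$ bound, the region $r \leq 1$ is routine: the power weights combine polynomially and yield a bound of $\|f\|_{L^1}$ after elementary cancellations (e.g. $u(r) I_-(r) \lec r^{-2} \int_0^r s^2 |f(s)| s\,ds \lec \|f\|_{L^1_{\leq 1}}$, and for $v(r) \int_r^1 u(s) |f(s)| s\,ds$ one uses the trivial estimate $r^2/s \leq s$ on $(r,1)$). The only delicate region is $r \geq 1$, where $v(r)$ diverges logarithmically. The essential observation is
\[
  v(r) \int_r^\infty u(s) |f(s)| s\,ds \lec \log(2+r) \int_r^\infty |f(s)| s\,ds
  \leq \int_r^\infty \log(2+s) |f(s)| s\,ds
  \leq \|f\|_{L^{1,\log}},
\]
where the middle inequality uses $\log(2+r) \leq \log(2+s)$ for $s \geq r$. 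This is precisely why the logarithmic weight in $L^{1,\log}$ is required and cannot be relaxed to an $L^1$ hypothesis.

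For the $r^2 L^1_{\leq 1}$ bound, which unpacks as $\int_0^1 |(\hat H^{-1} f)(r)|/r\,dr$, apply Fubini after inserting the small-$r$ asymptotics. The $u \cdot I_-$ contribution becomes $\int_0^1 v(s) |f(s)| s \int_s^1 r^{-3}\,dr\,ds$; the inner integral is $\sim s^{-2}$, which cancels against $v(s) \lec s^2$ to yield $\|f\|_{L^1_{\leq 1}}$. The $v \cdot I_+$ contribution becomes $\int_0^\infty u(s) |f(s)| s \cdot \tfrac{1}{2}\min(s,1)^2\,ds$ after integrating $r$ from $0$ to $\min(s,1)$; splitting at $s=1$ and using $u(s) \lec s^{-2}$ on $(0,1)$ and $u(s) \lec 1$ on $(1,\infty)$, both pieces reduce to $\|f\|_{L^1}$.

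The only real obstacle is the logarithmic divergence of $v$ at infinity in the $L^\infty$ estimate, which is exactly matched by the $L^{1,\log}$ weight in the hypothesis; once this pairing is identified, the remainder is a routine application of Fubini and the pointwise asymptotics \eqref{uvprops}.
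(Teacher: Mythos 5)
Your proposal is correct and follows essentially the same route as the paper: expand $\hat H^{-1}$ via the explicit variation-of-parameters kernel~\eqref{hatHinv}, insert the asymptotics~\eqref{uvprops}, observe that the logarithmic growth of $v$ at infinity is exactly absorbed by the $L^{1,\log}$ weight, and apply Fubini/Tonelli for the $r^2 L^1_{\leq 1}$ piece (the paper splits at $r=2$ rather than $r=1$, which is immaterial). One small omission in the sketch: for $r \geq 1$ you only treat $v(r)I_+(r)$, but $u(r)I_-(r)$ also produces a logarithm via $\int_1^r v(s)|f(s)|\,s\,ds \lesssim \int_1^r \log(2+s)|f(s)|\,s\,ds$; this is bounded by $\|f\|_{L^{1,\log}}$ directly with no new idea, but it should be recorded.
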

\begin{proof}
We estimate directly using the explicit form~\eqref{hatHinv}
and the properties~\eqref{uvprops}. First the pointwise bound:
for $ r \le 2$,
\[ 
  |\hat H^{-1} f (r)| \lesssim  
  \frac{1}{r^2}\int_{0}^{r} s^2 |f(s)| s ds + r^2 \int_{r}^{2} \frac{1}{s^2} |f(s)| s ds 
  + r^2 \int_{2}^{\infty} |f(s)| s ds  \lesssim \|f\|_{L^1} \leq \| f \|_{L^{1,\log}},
\]
and for $r \ge 2$,
\[
\begin{split}
  | \hat H^{-1} f (r) | &\lesssim 
  \int_{0}^{2} s^2 |f(s)| s ds + \int_{2}^{r} \log(s) |f(s)| s ds + 
  \log r \int_{r}^{\infty} |f(s)| s ds \\
  & \lec \| f \|_{L^1} + \| f \|_{L^{1,\log}}  \lec \| f \|_{L^{1,\log}},
\end{split}
\]
which together give $\| \hat H^{-1} f \|_{L^\infty} \lec \| f \|_{L^{1,\log}}$.
For the $r^2 L^1_{\leq 1}$ bound: using Tonelli's theorem,
\[
\begin{split}
  \int_{0}^{1} \frac{1}{r^2} | \hat H^{-1} f (r) | r dr 
  & \lesssim \int_{0}^{1} r dr \frac{1}{r^4} \int_{0}^{r} s^2 |f(s)| s ds 
  + \int_{0}^{1} r dr \int_{r}^{1} \frac{1}{s^2} |f(s)| s ds 
  + \int_{0}^{1} r dr \int_{1}^{\infty} |f(s)| s ds\\
  & = \int_0^1 s^2 |f(s)| s ds \int_s^1 \frac{d r}{r^3}
  + \int_0^1 \frac{1}{s^2} |f(s)| s ds \int_0^s r dr + \frac{1}{2} \int_1^\infty |f(s)| s ds \\
  & \lec \int_0^1 |f(s)| s ds + \int_1^\infty |f(s)| s ds = \| f \|_{L^1},
\end{split}
\]
and so $\| \hat H^{-1} f \|_{r^2 L^1_{\leq 1}} \lec \| f \|_{L^1} \lec \| f \|_{L^{1,\log}}$,
giving~\eqref{hatHinvbound}.
\end{proof}

Finally,~\eqref{L1est} follows from~\eqref{hatHinvbound} and~\eqref{relate}.
\end{proof}

Next, we prove Proposition~\ref{L2prop} by direct integration.
\begin{proof}
Given $f \in L^2$, set 
\[
  \eta := (\hat H + \be^2)^{-1} f \; \in D(\hat H). 
\]
Taking the inner product of $\eta$ with
\[ 
  (\hat{H} + \beta^2) \eta = (-\Delta_r + \hat{V} + \beta^2) \eta = f ,
\]
we get
\[
  \int_{0}^{\infty} \left( \eta_r^{2} + \hat V \eta^2 + \beta^2 \eta^2 \right) r dr  
  = (f, \eta) \lesssim \|\eta\|_{L^2} \|f\|_{L^2},
\]
which, since $\hat V > 0$, gives the $L^2$ estimate
\begin{equation} \label{L2bound}
  \|\eta\|_{L^2}\lesssim \frac{1}{\beta^2} \|f\|_{L^2}
\end{equation}
and then also 
\begin{equation}  \label{otherbound}
  \|\eta_r\|_{L^2} + \|\sqrt{\hat{V}}\eta\|_{L^2} \lesssim  
  \|\eta\|^{\frac{1}{2}}_{L^2} \|f\|^{\frac{1}{2}}_{L^2} \lesssim 
  \frac{1}{\beta} \|f\|_{L^2}.
\end{equation}

We use~\eqref{L2bound} and~\eqref{otherbound} to estimate $\| \eta \|_{L^\infty}$.
For $r \leq 1$,  $\frac{1}{r} \lesssim \sqrt{\hat{V}(r)}$, and so
\begin{equation}\label{interval_1}
  \eta^2(r) = 2 \int_{0}^{r}\eta_r(s) \frac{\eta(s)}{s} s ds \lesssim \|\eta_r\|_{L^2} 
  \|\sqrt{\hat{V}}\eta\|_{L^2} \lesssim \frac{1}{\beta^2}\|f\|^2_{L^2}.
\end{equation}
For $ r \geq \frac{1}{\beta}$
\begin{equation}\label{interval_2}
  \eta^2(r) = -2 \int_{r}^{\infty} \frac{1}{s}\eta_r(s) \eta(s) s ds \lesssim 
  \frac{1}{r}\|\eta_r\|_{L^2} \|\eta\|_{L^2} \lesssim \frac{1}{\beta^2}\|f\|^2_{L^2}.
\end{equation}
For $1 \leq  r \leq \frac{1}{\beta}$,
\begin{equation}\label{interval_3}
  |\eta(r) - \eta(1) | \lesssim \int_{1}^{r} |\eta_r(s)| \frac{1}{s} s ds \lesssim \|\eta_r\|_{L^2} \log^{\frac{1}{2}}(r) \lesssim \|\eta_r\|_{L^2}\log^{\frac{1}{2}}(\frac{1}{\beta})
\lesssim \frac{\log^{\frac{1}{2}}(\frac{1}{\beta})}{\beta}\|f\|_{L^2}
\end{equation}
Combining~\eqref{interval_1},~\eqref{interval_2} and~\eqref{interval_3} gives
\begin{equation}  \label{Linftybound}
  \|\eta\|_{L^\infty} \lesssim \frac{1}{\beta} \log^{\frac{1}{2}} (\frac{1}{\beta}) \|f\|_{L^2}.
\end{equation}

Now we estimate $\| \eta \|_{r^2 L^1_{\leq 1}}$. Since
\[ 
  \hat{H} = -\Delta_r + \frac{4}{r^2}  - V, \; \mbox { with } \; 
  V  = \frac{4}{r^2 + 1} \; \mbox{  bounded },
\]
Using~\eqref{L2bound} again,
\[
  \left\| \frac{\eta}{r^2} \right\|_{L^2} \lec \| (-\Delta_r + \frac{4}{r^2}) \eta \|_{L^2} 
  \lesssim \| f + V \eta + \beta^2 \eta \|_{L^2}
  \lec \| f \|_{L^2} + \| \eta \|_{L^2} \lec \frac{1}{\be^2} \|f\|_{L^2},
\]
and then H\"older,
\[
  \int_{0}^{\be} \frac{|\eta|}{r^2} r dr \lesssim 
  \be \|\frac{\eta}{r^2}\|_{L^2}  \lesssim \frac{1}{\beta} \|f\|_{L^2}.
\]
For the remaining interval, use the estimate~\eqref{Linftybound} of $\|\eta\|_{L^\infty}$:
\[
  \int_{\be}^{1} \frac{|\eta|}{r^2} r dr \lesssim 
  \|\eta\|_{L^\infty} \log(\frac{1}{\be} ) \lesssim 
  \frac{1}{\beta} \log^{\frac{3}{2}}(\frac{1}{\beta}) \|f\|_{L^2} 
\]
Combining the last two bounds gives
\begin{equation} \label{r2L1bound}
  \|\eta\|_{r^2 L^1_{\leq 1}} \lesssim 
  \frac{1}{\beta} \log^{\frac{3}{2}}(\frac{1}{\beta}) \|f\|_{L^2}.
\end{equation}
Then~\eqref{boundL2} follows from~\eqref{Linftybound} and~\eqref{r2L1bound}.
\end{proof}

\subsection{Computation of some integrals} 

\begin{equation}  \label{h^2/r^2}
  \int_0^\infty \frac{h^2}{r^2} r dr = \int_0^\infty \frac{4 r}{(r^2+1)^2} r dr
  = -\frac{2}{r^2+1} \big |^\infty_0 = 2 
\end{equation}
\[
\begin{split}
  \int_0^\infty \frac{1}{r} \hat{h} h^3 r dr 
  &= 8 \int_0^\infty \frac{(r^2-1)r^3}{(r^2+1)^4} dr
  = 4 \int_0^\infty \left( \frac{1}{(r^2+1)^2} - \frac{3}{(r^2+1)^3} + 
  \frac{2}{(r^2+1)^4} \right) 2 r dr \\
  &= 4 \left( 1 - \frac{3}{2} + \frac{2}{3} \right) = \frac{2}{3}
\end{split}
\]
\[
\begin{split}
  \int_0^\infty h^4 r dr &= 16 \int_0^\infty \frac{r^4}{(r^2+1)^4} r dr
  = 8 \int_0^\infty \left( \frac{1}{(r^2+1)^2} - \frac{2}{(r^2+1)^3} 
  + \frac{1}{(r^2+1)^4} \right) 2 r dr \\
  &= 8 \left( 1 - 1 + \frac{1}{3}  \right) = \frac{8}{3}
\end{split}
\]
\[
\begin{split}
  \int_0^\infty \frac{1}{r^2} h^4 r dr &= 
  16 \int_0^\infty \frac{r^2}{(r^2+1)^4} r dr
  = 8 \int_0^\infty \left( \frac{1}{(r^2+1)^3} - \frac{1}{(r^2+1)^4} \right) 2 r dr \\
  &= 8 \left( \frac{1}{2} - \frac{1}{3}  \right) = \frac{4}{3}
\end{split}
\]
\[
\begin{split}
  \int_0^\infty \frac{1}{r^2} h^3 r dr &=
  8 \int_0^\infty \frac{r^2}{(r^2+1)^3} dr
  = 2 \int_0^{\frac{\pi}{2}} \sin^2(2 \th) d \th = \frac{\pi}{2}
\end{split}
\]
\begin{equation}  \label{h3/r}
\begin{split}
  \int_0^\infty \frac{1}{r} h^3 r dr &=
  4 \int_0^\infty \frac{r^2}{(r^2+1)^3} 2 r dr
  = 4 \int_0^\infty \left( \frac{1}{(r^2+1)^2} - \frac{1}{(r^2+1)^3} \right) 2 r dr \\
  &= 4 \left(  1 - \frac{1}{2} \right) = 2.
\end{split}
\end{equation}
\[
\begin{split}
  \int_0^\infty \frac{1}{r^3} h^3 r dr &=
  4 \int_0^\infty \frac{1}{(r^2+1)^3} 2 r dr
  = 4 \cdot \frac{1}{2} = -2,
\end{split}
\]
\[
\begin{split}
  \int_0^\infty \frac{1}{r^3} \hat{h} h^3 r dr 
  &= 4 \int_0^\infty \frac{(r^2-1)}{(r^2+1)^4} 2 r dr
  = 4 \int_0^\infty \left( \frac{1}{(r^2+1)^3} - \frac{2}{(r^2+1)^4} \right) 2 r dr \\
  &= 4 \left( \frac{1}{2} - \frac{2}{3} \right) = \frac{2}{3}
\end{split}
\]


\section*{Acknowledgement}
This research was partially supported by the first author's NSERC Discovery Grant
03847-18.

\bibliography{references}
\bibliographystyle{ieeetr}

\end{document}